\documentclass{aims}
\usepackage{amsmath}
\usepackage{paralist}
\usepackage{graphics} 
\usepackage{epsfig}
\usepackage{graphicx}  
\usepackage{physics}
\usepackage{mathtools}
\usepackage{epstopdf} 
\usepackage[colorlinks=true]{hyperref}
\usepackage{dsfont}
\hypersetup{urlcolor=blue, citecolor=red}

\numberwithin{equation}{section}

  \textheight=8.2 true in
   \textwidth=5.0 true in
    \topmargin 30pt
     \setcounter{page}{1}

\newtheorem{theorem}{Theorem}[section]
\newtheorem{corollary}{Corollary}[section]
\newtheorem{assumption}{Assumption} 

\newtheorem{lemma}{Lemma}[section]
\newtheorem{proposition}{Proposition}[section]

\newtheorem{definition}{Definition}[section]
\newtheorem{remark}{Remark}[section]

\title[STATE-CONSTRAINED CONTROL FOR JUMP DIFFUSIONS] 
      {Backward Reachability Approach to State-Constrained Stochastic Optimal Control Problems for Jump Diffusion Systems}

\author[Jun Moon]{}

\subjclass{Primary: 45K05, 49L25; Secondary: 93E20.}
 \keywords{State-constrained problems, stochastic target problems, jump diffusion systems, integro-partial differential equations, viscosity solutions.}

 \email{jmoon12@uos.ac.kr}

\thanks{This research was supported in part by the National Research Foundation of Korea (NRF) Grant funded by the Ministry of Science and ICT, South Korea (NRF-2017R1E1A1A03070936, NRF-2017R1A5A1015311).
 }


\begin{document}
\maketitle

\centerline{\scshape Jun Moon}
\medskip
{\footnotesize
 \centerline{School of Electrical and Computer Engineering}
   \centerline{University of Seoul, Seoul, 02504, South Korea}
} 

%

\bigskip

 \centerline{(Communicated by the associate editor name)}

\begin{abstract}
In this paper, we consider the stochastic optimal control problem for jump diffusion systems with state constraints. In general, the value function of such problems is a discontinuous viscosity solution of the Hamilton-Jacobi-Bellman (HJB) equation, since the regularity cannot be guaranteed at the boundary of the state constraint. By adapting approaches of \cite{Bokanowski_SICON_2016} and the stochastic target theory, we obtain an equivalent representation of the original value function as the backward reachable set. We then show that this backward reachable can be characterized by the zero-level set of the auxiliary value function for the unconstrained stochastic control problem, which includes two additional unbounded controls as a consequence of the martingale representation theorem. We prove that the auxiliary value function is a unique continuous viscosity solution of the associated HJB equation, which is the second-order nonlinear integro-partial differential equation (IPDE). Our paper provides an explicit way to characterize the original (possibly discontinuous) value function as a zero-level set of the continuous solution of the auxiliary HJB equation. The proof of the existence and uniqueness requires a new technique due to the unbounded control sets, and the presence of the singularity of the corresponding L\'evy measure in the nonlocal operator of the HJB equation.
\end{abstract}

\section{Introduction}\label{Section_1}

Let $B$ and $\tilde{N}$ be a standard Brownian motion and an $E$-marked compensated Poisson random process, respectively, which are mutually independent of each other. The problem studied in this paper is to minimize the following objective functional over $u \in \mathcal{U}_{t,T}$
	\begin{align}
\label{eq_0_1}	
J(t,a;u) = \mathbb{E} \Bigl [ \int_t^T l(s,x_{s}^{t,a;u},u_s) \dd s + m(x_{T}^{t,a;u})	\Bigr ],
\end{align}
subject to the $\mathbb{R}^n$-dimensional stochastic differential equation (SDE)
\begin{align}
\label{eq_0_2}
\begin{cases}
\dd x_{s}^{t,a;u} = f(s,x_{s}^{t,a;u},u_s)\dd s + \sigma(s,x_{s}^{t,a;u},u_s) \dd B_s	\\
\qquad \qquad + \int_{E} \chi(s,x_{s-}^{t,a;u},u_s,e) \tilde{N}(\dd e, \dd s), ~ s \in (t,T]\\
x_{t}^{t,a;u} = a,
\end{cases}	
\end{align}
and the \emph{state constraint} ($\Gamma$ is a nonempty closed subset of $\mathbb{R}^n$)
\begin{align}
\label{eq_0_3}
x_{s}^{t,a;u} \in \Gamma,~ \forall s \in [t,T],~ \text{$\mathbb{P}$-a.s.}
\end{align}
The precise problem formulation is given in Section \ref{Section_2_1}. The associated value function for (\ref{eq_0_1}) is defined by
\begin{align}
\label{eq_0_4}
V(t,a) := \inf_{u \in \mathcal{U}_{t,T}} \{ J(t,a;u)~|~ x_{s}^{t,a;u} \in \Gamma,~ \text{$\mathbb{P}$-a.s.,}~ \forall s \in [t,T] \}.	
\end{align}
The problem in (\ref{eq_0_4}) can then be referred to as the \emph{stochastic optimal control problem for jump diffusion systems with state constraints}. 

The first main result of this paper is that (\ref{eq_0_4}) can be equivalently represented by (see Theorem \ref{Theorem_1})
	\begin{align}
	\label{eq_0_5}
		V(t,a) = \inf \{b \geq 0~| (a,b) \in \mathcal{R}_t^\Gamma \} = \inf \{b \geq 0~|~ W(t,a,b) = 0, a \in \mathbb{R}^n \},
	\end{align}
	where $\mathcal{R}_t^\Gamma$ is the \emph{backward reachable set} of the stochastic target  problem with state constraints (see (\ref{eq_0_6_1_1})), and $W$ is a continuous value function of the auxiliary stochastic control problem that includes unbounded control sets $\mathcal{A}_{t,T} \times \mathcal{B}_{t,T}$. Then our second main result is that $W$ is a unique continuous viscosity solution of the following Hamilton-Jacobi-Bellman (HJB) equation with suitable boundary conditions (see Theorems \ref{Theorem_2} and \ref{Theorem_3}): (time and state arguments are suppressed)
\begin{align}	
\label{eq_0_6}
& - \partial_t W	+ \mathop{\sup_{u \in U}}_{\alpha \in \mathbb{R}^{r}, \beta \in G^2} \Bigl \{ - \langle D W, \begin{bmatrix}
 f(u) \\
 -l(u)	
 \end{bmatrix} \rangle  - \frac{1}{2} \Tr 
\Bigl ( \begin{bmatrix}
\sigma \sigma^\top(u) & \sigma(u) \alpha \\
(\sigma(u) \alpha)^\top & \alpha^\top \alpha 	
\end{bmatrix} D^2 W \Bigr )
\\
& - \int_{E} \bigl [ W(t,a + \chi(u,e), b+ \beta(e)) - W(t,a,b) - \langle DW,  \begin{bmatrix}
  \chi(u,e) \\
  \beta(e)	
  \end{bmatrix} \rangle \bigr ]\pi(\dd e) \Bigr \} - d(a,\Gamma), \nonumber
\end{align}
which is the second-order nonlinear integro-partial differential equation (IPDE) that includes two unbounded control variables $(\alpha,\beta) \in \mathbb{R}^r \times G^2$. We give a detailed statement on the main results of the paper after providing the literature review.

(Deterministic and stochastic) control problems with state constraints were studied extensively in the literature; see \cite{Soner_SICON_1986_1, Soner_SICON_1986_2, Capuzzo_AMS_1990, Hermosilla_SCL_2017, Frankowska_CV_2012, Bardi_DCDS_2000, Altarovici_ESIAM_2013, Katsoulakis_Indiana_1994, Ishii_Indiana_2002, Bouchard_Elie_SICON_2010, Bouchard_Nutz_SICON_2012} and the references therein. 
In particular, as discussed in \cite{Soner_SICON_1986_1, Katsoulakis_Indiana_1994, Bouchard_Nutz_SICON_2012, Bokanowski_SICON_2016}, it can be conjectured that $V$ in (\ref{eq_0_5}) is only a discontinuous viscosity solution of the following constrained HJB equation:
\begin{align}
\label{eq_0_6_2_1_2_1_1}
\begin{cases}
\partial_t V + \sup_{u \in U} \bigl \{ - \langle D V, f(u) \rangle - \frac{1}{2} \Tr ( \sigma \sigma^\top(u) D^2 V) - l(u) \\
\quad \quad - \int_{E} [ V(t,a+\chi(u,e)) - V(t,a) - \langle D V,\chi(u,e) \rangle ] \pi (\dd e) \bigr \} = 0 \\
\qquad \qquad t \in [0,T),~ x \in \mathrm{int}(\Gamma)\\
\partial_t V + \sup_{u \in U} \bigl \{ - \langle D V, f(u) \rangle - \frac{1}{2} \Tr ( \sigma \sigma^\top(u) D^2 V) - l(u) \\
\quad \quad - \int_{E} [ V(t,a+\chi(u,e)) - V(t,a) - \langle D V,\chi(u,e) \rangle ] \pi (\dd e) \bigr \} \geq 0 \\
\qquad \qquad t \in [0,T),~ x \in \partial \Gamma \\
V(T,x) = m(x),~ x \in \Gamma.
\end{cases}
\end{align}
However, it may be hard to characterize the solution of (\ref{eq_0_6_2_1_2_1_1}) directly due to discontinuity and inequality constraint at $\partial \Gamma$ (the boundary of $\Gamma$). We also note that the references mentioned above considered state-constrained problems only for deterministic systems or SDEs in a Brownian setting without jumps, and their control spaces are assumed to be bounded. 
As stated above, in this paper, instead of seeking for a (possibly discontinuous) solution of (\ref{eq_0_4}) through (\ref{eq_0_6_2_1_2_1_1}), we obtain a  \emph{continuous solution} of (\ref{eq_0_4}) via its equivalent zero-level set representation in (\ref{eq_0_5}). Viability theory for deterministic and stochastic systems could be viewed as an alternative approach to solve state-constrained problems \cite{Aubin_book_1991, Aubin_SAA_1998, Aubin_book, Buckdahn_Existence_1998}, and its extension to jump diffusion models was studied in \cite{Peng_Acta_2008, Zhu_ACTA_2016}. However, they focus only on the viability property of state constraints (without optimizing the objective functional), their control spaces are bounded, and some additional technical assumptions (e.g. see \cite[(H.3)]{Peng_Acta_2008}) are essentially required. 

The state-constrained problem via the backward reachability approach was first studied in \cite{Bokanowski_SICON_2016}.    The model used in \cite{Bokanowski_SICON_2016} is the SDE driven by Brownian motion without jumps, which is a special case of (\ref{eq_0_2}). Moreover, the HJB equation in \cite{Bokanowski_SICON_2016} is the local equation, which is also a special case of (\ref{eq_0_6}) without the nonlocal integral term (the second line of (\ref{eq_0_6})). The aim of this paper is to generalize the results in \cite{Bokanowski_SICON_2016} to the case of jump diffusion systems. As mentioned below, it turns out that these generalizations are not straightforward due to the jump diffusion part in (\ref{eq_0_1}) and the nonlocal operator in the HJB equation (\ref{eq_0_6}). 


Our first main result given in (\ref{eq_0_5}) is obtained based on the stochastic target theory and the approach developed in \cite{Bokanowski_SICON_2016}.  In particular, using the equivalence relationship between stochastic optimal control and stochastic target problems, established in \cite{Bouchard_SCL_2012}, we show (\ref{eq_0_5}), where $\mathcal{R}_t^\Gamma$ is the backward reachable set with the state constraint given by
\begin{align}
\label{eq_0_6_1_1}
\mathcal{R}_{t}^{\Gamma} & := \{ (a,b) \in \mathbb{R}^n \times \mathbb{R}~|~\exists (u,\alpha,\beta) \in \mathcal{U}_{t,T} \times \mathcal{A}_{t,T} \times \mathcal{B}_{t,T}~\text{such that} \\
&\qquad  y_{T;t,a,b}^{u,\alpha,\beta} \geq m(x_T^{t,a;u}),~ \text{$\mathbb{P}$-a.s. and} ~ x_s^{t,a;u} \in \Gamma,~ \forall s \in [t,T],~ \text{$\mathbb{P}$-a.s.} \},	\nonumber
\end{align}
with $(y_{s;t,a,b}^{u,\alpha,\beta})_{s \in [t,T]}$ being an auxiliary state process controlled by additional control processes $(\alpha,\beta) \in \mathcal{A}_{t,T} \times \mathcal{B}_{t,T}$ that take values from  unbounded control spaces. 
Here, the main technical tool to show the equivalence in (\ref{eq_0_5}) using (\ref{eq_0_6_1_1}) is the martingale representation theorem for general L\'evy processes, by which additional (unbounded) controls $(\alpha,\beta) \in \mathcal{A}_{t,T} \times \mathcal{B}_{t,T}$ are induced. It should be mentioned that \cite{Bokanowski_SICON_2016} also used the result of \cite{Bouchard_SCL_2012} (where only $(u,\alpha) \in \mathcal{U}_{t,T} \times \mathcal{A}_{t,T}$ appeared in (\ref{eq_0_6_1_1})), and we extend it to the case of jump diffusion models.   

The second main result is to show that the auxiliary value function $W$ is a unique continuous viscosity solution of the HJB equation in (\ref{eq_0_6}). The proof for the existence that $W$ is a viscosity solution requires the dynamic programming principle and the application of It\^o's formula of general L\'evy-type stochastic integrals to test functions, which must be different from that in \cite{Bokanowski_SICON_2016}. Furthermore, for the proof of uniqueness, the approach in \cite{Bokanowski_SICON_2016} (that also relies on \cite{Bruder_Hal_2005, Bokanowski_SINA_2009}) cannot be directly applied to our case, since (\ref{eq_0_6}) includes the nonlocal (integral) operator in terms of the singular L\'evy measure $\pi$ induced due to jump diffusions  (the second line of (\ref{eq_0_6})). Note also that in the classical stochastic optimal control problem for jump diffusion systems without state constraints ($\Gamma = \mathbb{R}^n)$, the corresponding control space is assumed to be a compact set \cite{Buckdahn_SPTA_2011, Peng_Acta_2008, Pham_JMSEC_1998, Soner_Jump_SCTA_1998}. Hence, their approaches cannot be adapted to the proof for the uniqueness of the HJB equation in (\ref{eq_0_6}).

Our strategy to prove the uniqueness is to use the equivalent definition of viscosity solutions in terms of (super and sub)jets, where the nonlocal integral operator is decomposed into the singular part with the test function and the nonsingular part with jets (see Lemma \ref{Lemma_8} and \cite{Barles_Poincare_2008}). Then we show the boundedness of the nonlocal singular part with the help of the regularity of test functions. Note that the unboundedness of $\beta \in G^2$ in the nonlocal nonsingular part is resolved with the help of the technical lemma (in Appendix \ref{Appendix_B}) and the proper estimates based on \cite[Proposition 3.7]{Crandall_AMS_1992_Viscosity} after doubling variables. In addition, we convert the second-order local part (the first line of (\ref{eq_0_6})) into the equivalent spectral radius form, by which the unboundedness with respect to $\alpha \in \mathbb{R}^r$ can be handled (see Lemma \ref{Lemma_9}). By combining these steps, we obtain a desired contradiction of the comparison principle, which implies the uniqueness of the viscosity solution for (\ref{eq_0_6}) (see Corollary \ref{Corollary_2}).

The inequality in (\ref{eq_0_6_1_1}) also describes the stochastic target constraint; see \cite{Bouchard_SPTA_2002, Bouchard_SCL_2012, Bouchard_Elie_SICON_2010, Moreau_SICON_2011, Soner_SICON_2002} and the references therein. Specifically, the stochastic target problem for jump diffusion systems considers (see \cite{Bouchard_SPTA_2002, Bouchard_SCL_2012, Moreau_SICON_2011})
 \begin{align}
 \label{eq_0_8}
 \inf\{b \geq 0~|~\exists (u,\alpha,\beta) \in \mathcal{U}_{t,T} \times \mathcal{A}_{t,T} \times \mathcal{B}_{t,T}~\text{such that}~	y_{T;t,a,b}^{u,\alpha,\beta} \geq m(x_T^{t,a;u})\},
 \end{align}
 which does not have state constraints ($\Gamma = \mathbb{R}^n$). It was shown that the value function for (\ref{eq_0_8}) is a discontinuous viscosity solution, and its uniqueness has not been fully addressed particularly for jump diffusion models.\footnote{Note that \cite{Bouchard_SPTA_2002, Bouchard_SCL_2012, Moreau_SICON_2011} studied stochastic target problems for jump diffusion models, in which the comparison result of viscosity solutions was not considered.} On the other hand, this paper shows that with an additional assumption (see (ii) of Assumption \ref{Assumption_2} and Theorem \ref{Theorem_1}), $V$ is expressed as the zero-level subset of $W$ in (\ref{eq_0_5}), where $W$ is a unique continuous viscosity solution of (\ref{eq_0_6}). Hence, our approach provides an explicit way to characterize $V$ as a continuous solution of (\ref{eq_0_6}) even if $V$ is discontinuous, which can be obtained easily using various numerical computation schemes. Note also that for SDEs with Brownian motion (no jumps), \cite{Soner_CPDE_2002} showed the (sub)level-set characterization of (\ref{eq_0_8}) under the bounded control set, in which the uniqueness of viscosity solutions was not addressed. We mention that various level-set approaches for characterization of reachable sets in  (deterministic and stochastic) control problems can be found in \cite{Assellaou_DCDS_2015, Margellos_TAC_2011, Mitchell_TAC_2005, Esfahani_Automatica_2016}.

The rest of the paper is organized as follows. The notation and the precise problem statement are given in Section \ref{Section_2}. In Section \ref{Section_3}, using the theory of stochastic target problems, we obtain the equivalent representation of (\ref{eq_0_4}) given in (\ref{eq_0_5}). In Section \ref{Section_4}, we show that the auxiliary value function $W$ is the continuous viscosity solution of the HJB equation in (\ref{eq_0_6}). The uniqueness of the viscosity solution for (\ref{eq_0_6}) is presented in Section \ref{Section_5}, and its proof is provided in Section \ref{Section_6}. Three appendices include some technical results, which are required to prove the main results of the paper.

\section{Notation and Problem Statement} \label{Section_2}
In this section, we first give the notation used in the paper. We then provide the precise problem formulation.

\subsection{Notation}
Let $\mathbb{R}^n$ be the $n$-dimensional Euclidean space. For $x,y \in \mathbb{R}^n$, $x^\top$ denotes the transpose of $x$, $\langle x,y \rangle$ is the inner product, and $|x| := \langle x, x \rangle^{1/2}$. Let $\mathbb{S}^n$ be the set of $n \times n$ symmetric matrices. Let $\Tr(A)$ be the trace operator for a square matrix $A \in \mathbb{R}^{n \times n}$. Let $\|\cdot\|_{F}$ be the Frobenius norm, i.e., $\|A\|_{F} := \Tr(AA^\top)^{1/2}$ for $A \in \mathbb{R}^{n \times m}$. Let $I_n$ be an $n \times n$ identity matrix. In various places of the paper, an exact value of a positive constant $C$ can vary from line to line, which mainly depends on the coefficients in Assumptions \ref{Assumption_1},  \ref{Assumption_2} and \ref{Assumption_3}, terminal time $T$, and the initial condition, but independent to a specific choice of control.

Let $(\Omega, \mathcal{F}, \mathbb{P})$ be a complete probability space with the natural filtration $\mathbb{F} :=\{\mathcal{F}_s,~ 0 \leq s \leq t\}$ generated by the following two mutually independent stochastic processes and augmented by all the $\mathbb{P}$-null sets in $\mathcal{F}$:
\begin{itemize}
\item an $r$-dimensional standard Brownian motion $B$ defined on $[0,T]$;
\item an $E$-marked right continuous Poisson random measure (process) $N$ defined on $E  \times [0,T]$, where $E := \bar{E} \setminus \{0\}$ with $\bar{E} \subset \mathbb{R}^{l}$ is a Borel subset of $\mathbb{R}^{l}$ equipped with its Borel $\sigma$-field $\mathcal{B}(E)$. The intensity measure of $N$ is denoted by $\hat{\pi}(\dd e, \dd t) := \pi(\dd e) \dd t$, satisfying $\pi(E) < \infty$, where $\{\tilde{N}(A,(0,t]) := (N-\hat{\pi})(A,(0,t])\}_{t \in (0,T]}$ is an associated compensated $\mathcal{F}_t$-martingale random (Poisson) measure of $N$ for any $A \in \mathcal{B}(E)$. Here, $\pi$ is an $\sigma$-finite L\'evy measure on $(E,\mathcal{B}(E))$, which satisfies $\int_{E} (1 \wedge  |e|^2) \pi (\dd e) < \infty$.
\end{itemize}

We introduce the following spaces:
\begin{itemize}
\item $L^p(\Omega,\mathcal{F}_t;\mathbb{R}^n)$, $t \in [0,T]$, $p \geq 1$:  the space of $\mathcal{F}_t$-measurable $\mathbb{R}^n$-valued random vectors, satisfying $\|x\|_{L^p} := \mathbb{E}[|x|^p] < \infty$.
\item $\mathcal{L}_{\mathbb{F}}^p(t,T;\mathbb{R}^n)$, $t \in [0,T]$, $p \geq 1$: the space of $\mathbb{F}$-predictable $\mathbb{R}^n$-valued random processes, satisfying $\|x\|_{\mathcal{L}_{\mathcal{F}}^p} := \mathbb{E}[\int_t^T |x_s|^p \dd s ]^{\frac{1}{p}} < \infty$.
\item $G^2(E,\mathcal{B}(E),\pi;\mathbb{R}^n)$:  the space of square integrable functions such that for $k \in G^2(E,\mathcal{B}(E),\pi;\mathbb{R}^n)$, $k:E \rightarrow \mathbb{R}^n$ satisfies $\|k\|_{G^2} := (\int_{E} |k(e)|^2 \pi(\dd e))^{\frac{1}{2}} < \infty$, where $\pi$ is an $\sigma$-finite L\'evy measure on $(E,\mathcal{B}(E))$. $G^2(E,\mathcal{B}(E),\pi;\mathbb{R}^n)$ is a Hilbert space \cite[page 9]{Applebaum_book}.
\item $\mathcal{G}^2_\mathbb{F}(t,T,\pi;\mathbb{R}^n)$, $t \in [0,T]$: the space of stochastic processes such that for $k \in \mathcal{G}^2_\mathbb{F}(t,T,\pi;\mathbb{R}^n)$, $k:\Omega \times [t,T] \times E \rightarrow \mathbb{R}^n$ is an $\mathcal{P} \times \mathcal{B}(E)$-measurable $\mathbb{R}^n$-valued predictable process satisfying $\|k\|_{\mathcal{G}^2_{\mathbb{F}}} := \mathbb{E}[\int_t^T \int_{E} |k_s(e)|^2 \pi (\dd e) \dd s]^{\frac{1}{2}} < \infty$, where $\mathcal{P}$ denotes the $\sigma$-algebra of $\mathcal{F}_t$-predictable subsets of $\Omega \times [0,T]$. Note that $\mathcal{G}^2_\mathbb{F}(t,T,\pi;\mathbb{R}^n)$ is a Hilbert space \cite[Lemma 4.1.3]{Applebaum_book}.
\item $C([0,T] \times \mathbb{R}^n)$: the set of $\mathbb{R}$-valued continuous functions on $[0,T] \times \mathbb{R}^n$.
\item $C_p([0,T] \times \mathbb{R}^n)$, $p \geq 1$: the set of $\mathbb{R}$-valued  continuous functions such that $f \in C_p([0,T] \times \mathbb{R}^n)$ holds $|f(t,x)| \leq C (1 + |x|^p)$. 
\item $C_b^{l,r}([0,T] \times \mathbb{R}^n)$ $l,r \geq 1$: the set of $\mathbb{R}$-valued continuous functions on $[0,T] \times \mathbb{R}^n$ such that for $f \in C^{l,r}([0,T] \times \mathbb{R}^n)$, $\partial_t^{l} f$ and $D^r f$ exist, and are continuous and uniformly bounded, where $\partial_t^{l} f$ is the $l$th-order partial derivative of $f$ with respect to $t \in [0,T]$ and $D^r f$ is the $r$th-order derivative of $f$ in $x \in \mathbb{R}^n$.
\end{itemize}

\subsection{Problem Statement}\label{Section_2_1}
We consider the following stochastic differential equation (SDE) driven by both $B$ and $\tilde{N}$:
\begin{align}
\label{eq_1}
\begin{cases}
\dd x_{s}^{t,a;u} = f(s,x_{s}^{t,a;u},u_s)\dd s + \sigma(s,x_{s}^{t,a;u},u_s) \dd B_s	\\
\qquad \qquad + \int_{E} \chi(s,x_{s-}^{t,a;u},u_s,e) \tilde{N}(\dd e, \dd s), ~ s \in (t,T]\\
x_{t}^{t,a;u} = a,
\end{cases}
\end{align}
where $x \in \mathbb{R}^n$ is the state
 and $u \in U$ is the control with $U$ being the control space, which is a compact subset of $\mathbb{R}^m$. We impose the following assumption:

\begin{assumption}\label{Assumption_1}
$f: [0,T] \times \mathbb{R}^n \times U \rightarrow \mathbb{R}^n$, $\sigma: [0,T] \times \mathbb{R}^n \times U \rightarrow \mathbb{R}^{n \times r}$ and $\chi: [0,T] \times \mathbb{R}^n \times U \times E \rightarrow \mathbb{R}^n$ are continuous in $(t,x,u) \in [0,T] \times \mathbb{R}^n \times U$, and hold the following conditions with the constant $L > 0$: for $x,x^\prime \in \mathbb{R}^n$,
	\begin{align*}
	|f(t,x,u) - f(t,x^\prime,u)| + |\sigma(t,x,u) - \sigma(t,x^\prime,u)| & \leq L |x-x^\prime| \\
	\| \chi(t,x,u,e) -  \chi(t,x^\prime,u,e)\|_{G^2} & \leq L |x-x^\prime| \\
	|f(t,x,u)| + |\sigma(t,x,u)| + \| \chi(t,x,u,e)\|_{G^2} & \leq L (1 + |x|).
	\end{align*}	
\end{assumption}

The set of admissible controls is denoted by $\mathcal{U}_{t,T} := \mathcal{L}_{\mathbb{F}}^2(t,T;U)$. Then under Assumption \ref{Assumption_1}, we have the following estimates for (\ref{eq_1}). Since we could not find these estimates in the existing literature, a complete proof is given in Appendix \ref{Appendix_A}.

\begin{lemma}\label{Lemma_1}
Suppose that Assumption \ref{Assumption_1} holds. Then the following results hold:
\begin{enumerate}[(i)]
	\item For any $a \in \mathbb{R}^n$ and $u \in \mathcal{U}_{t,T}$, there is a unique $\mathbb{F}$-adapted c\`adl\`ag process such that (\ref{eq_1}) holds;
	\item For any $a,a^\prime \in \mathbb{R}^n$, $u \in \mathcal{U}_{t,T}$, and $t,t^\prime \in [0,T]$ with $t \leq t^\prime$, there exists a constant $C>0$ such that
	\begin{align}	
	\label{eq_1_1}
		\mathbb{E} \Bigl [ \sup_{s \in [t,T]} |x_s^{t,a;u}|^2 \Bigr ] & \leq C(1+|a|^2) \\
	\label{eq_1_2}
		\mathbb{E} \Bigl [\sup_{s \in [t,T]} |x_s^{t,a;u} - x_{s}^{t,a^\prime;u}|^2 \Bigr ] & \leq  C|a-a^\prime|^2 \\
	\label{eq_1_3}
		\mathbb{E} \Bigl [ \sup_{s \in [t^\prime,T]} |x_{s}^{t,a;u} - x_{s}^{t^\prime,a;u}|^2 \Bigr ] & \leq  C(1+|a|^2)|t^\prime-t|.
	\end{align}
\end{enumerate}
\end{lemma}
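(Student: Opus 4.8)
The plan is to treat the three parts in order, with existence/uniqueness (i) handled by a standard Picard fixed-point argument and (ii) obtained as a byproduct of the a priori estimates.

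For (i): I would set up the Picard iteration $x^{(0)}_s \equiv a$, $x^{(k+1)}_s = a + \int_t^s f(r,x^{(k)}_r,u_r)\dd r + \int_t^s \sigma(r,x^{(k)}_r,u_r)\dd B_r + \int_t^s\!\int_E \chi(r,x^{(k)}_{r-},u_r,e)\tilde N(\dd e,\dd r)$ on $\mathcal{L}^2_{\mathbb{F}}(t,T;\mathbb{R}^n)$. The key ingredients are the Burkholder--Davis--Gundy inequality for the Brownian integral, the isometry/BDG inequality for the compensated Poisson integral, namely $\mathbb{E}\bigl[\sup_{s\le S}|\int_t^s\!\int_E \chi\,\tilde N|^2\bigr]\le C\,\mathbb{E}\int_t^S\!\int_E|\chi|^2\pi(\dd e)\dd r$, and the Lipschitz bounds of Assumption \ref{Assumption_1} written in the $\|\cdot\|_{G^2}$ norm. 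These give $\mathbb{E}\bigl[\sup_{s\in[t,S]}|x^{(k+1)}_s-x^{(k)}_s|^2\bigr]\le C\int_t^S \mathbb{E}\bigl[\sup_{r\in[t,\rho]}|x^{(k)}_r-x^{(k-1)}_r|^2\bigr]\dd\rho$, and iterating yields a convergent series (the usual $C^k(S-t)^k/k!$ bound), hence a limit in $\mathcal{L}^2_{\mathbb{F}}$; the c\`adl\`ag property is inherited from the stochastic integrals, and uniqueness follows from the same Gronwall-type estimate applied to the difference of two solutions.

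For (ii): estimate \eqref{eq_1_1} comes from applying BDG to each term of \eqref{eq_1}, using the linear-growth bounds $|f|+|\sigma|+\|\chi\|_{G^2}\le L(1+|x|)$, to get $\mathbb{E}\bigl[\sup_{s\in[t,S]}|x^{t,a;u}_s|^2\bigr]\le C(1+|a|^2)+C\int_t^S \mathbb{E}\bigl[\sup_{r\in[t,\rho]}|x^{t,a;u}_r|^2\bigr]\dd\rho$ followed by Gronwall (a localization/stopping-time argument handles the a priori finiteness of the left side). Estimate \eqref{eq_1_2} is identical in structure: subtract the two SDEs, apply BDG and the Lipschitz bounds (again in $\|\cdot\|_{G^2}$ for the jump term), and apply Gronwall to obtain $\mathbb{E}\bigl[\sup|x^{t,a;u}_s-x^{t,a';u}_s|^2\bigr]\le C|a-a'|^2$. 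Estimate \eqref{eq_1_3} is the only one requiring a little care: for $s\ge t'$, write $x^{t,a;u}_s - x^{t',a;u}_s = (x^{t,a;u}_{t'} - a) + \int_{t'}^s[\cdots]$, where the bracket collects the differences of the coefficients evaluated along the two flows; the increment term $\mathbb{E}[|x^{t,a;u}_{t'}-a|^2]$ is bounded by $C(1+|a|^2)|t'-t|$ using \eqref{eq_1_1} and the linear-growth bounds on $[t,t']$, and the remaining integral is controlled by Lipschitz continuity plus Gronwall on $[t',T]$, giving \eqref{eq_1_3}.

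The main obstacle — and the reason a self-contained proof is warranted — is the careful bookkeeping of the jump term in the $G^2$-norm rather than a pointwise-in-$e$ bound: one must consistently use the BDG/isometry inequality $\mathbb{E}\bigl[\sup_{s}|\int_t^s\!\int_E\chi\,\tilde N(\dd e,\dd r)|^2\bigr]\le C\,\mathbb{E}\int_t^S\|\chi(r,\cdot)\|_{G^2}^2\dd r$ together with the hypotheses of Assumption \ref{Assumption_1}, which are themselves stated in terms of $\|\cdot\|_{G^2}$; the singularity of $\pi$ near $0$ never enters because only the square-integrability $\int_E(1\wedge|e|^2)\pi(\dd e)<\infty$ (absorbed into the definition of $G^2$) is used. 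Once the jump integrals are handled this way, all three estimates reduce to the classical Brownian-SDE template via Gronwall's inequality, so I would present the jump estimates in detail and abbreviate the standard drift/diffusion parts.
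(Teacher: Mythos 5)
Your proposal is correct and follows essentially the same route as the paper's Appendix~\ref{Appendix_A} proof: the estimates in (ii) are obtained by combining the Burkholder--Davis--Gundy/Kunita bound for the compensated Poisson integral (in the $\|\cdot\|_{G^2}$ norm) with H\"older and Gronwall, and (\ref{eq_1_3}) is reduced to comparing the solution started at $t'$ from $x_{t'}^{t,a;u}$ with the one started from $a$, which is exactly the flow-property argument the paper uses together with (\ref{eq_1_2}) and (\ref{eq_1_1}). The only cosmetic difference is that you sketch a Picard iteration for part (i), whereas the paper simply cites the standard existence--uniqueness results for L\'evy-driven SDEs.
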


The objective functional is given by
\begin{align}
\label{eq_2}
J(t,a;u) = \mathbb{E} \Bigl [ \int_t^T l(s,x_{s}^{t,a;u},u_s) \dd s + m(x_{T}^{t,a;u})	\Bigr ].
\end{align}
Let $\Gamma \subset \mathbb{R}^n$ be the nonempty and closed set, which captures the state constraint. Then the state-constrained stochastic control problem for jump diffusion systems considered in this paper is as follows:
\begin{align*}
& \inf_{u \in \mathcal{U}_{t,T}}	J(t,a;u)\\
& \text{subject to (\ref{eq_1}) and } 	x_{s}^{t,a;u} \in \Gamma,~ \text{$\forall s \in [t,T]$, $\mathbb{P}$-a.s.}
\end{align*}
We introduce the value function for the above problem:
\begin{align}
\label{eq_4}
V(t,a) := \inf_{u \in \mathcal{U}_{t,T}} \{ J(t,a;u)~|~ x_{s}^{t,a;u} \in \Gamma,~ \text{$\mathbb{P}$-a.s.,}~ \forall s \in [t,T] \}.
\end{align}
Th following assumptions are imposed for (\ref{eq_2}), under which (\ref{eq_4}) is well defined.

\begin{assumption}\label{Assumption_2}
\begin{enumerate}[(i)]
	\item $l: [0,T] \times \mathbb{R}^n \times U \rightarrow \mathbb{R}$ and $m:\mathbb{R}^n \rightarrow \mathbb{R}$ are continuous in $(t,x,u) \in [0,T] \times \mathbb{R}^n \times U$. $l$ and $m$ satisfy the following conditions with the constant $L > 0$: for $x,x^\prime \in \mathbb{R}^n$, 
	\begin{align*}
	 |l(t,x,u) - l(t,x^\prime,u)| + |m(x) - m(x^\prime)| & \leq L |x-x^\prime| \\
	|l(t,x,u)| + |m(x)| & \leq L(1 + |x|);
	\end{align*}
	\item $l$ and $m$ are nonnegative functions, i.e., $l,m \geq 0$.
\end{enumerate}	
\end{assumption}

\begin{remark}\label{Remark_1}
In view of (ii) of Assumption \ref{Assumption_2}, $J(t,a;u) \geq 0$ for any $(t,a,u) \in [0,T] \times \mathbb{R}^n \times \mathcal{U}_{t,T}$, which implies that $V(t,a) \geq 0$ for $(t,a) \in [0,T] \times \mathbb{R}^n$.
\end{remark}

\section{Equivalent Stochastic Target Problem} \label{Section_3}
In this section, we convert the original problem in (\ref{eq_4}) into the stochastic target problem for jump diffusion systems with state constraints. Then we show that (\ref{eq_4}) can be characterized by the backward reachable set of the stochastic target problem, which is equivalent to the zero-level set of the auxiliary value function.

\subsection{Equivalent Stochastic Target Problem via Backward Reachability Approach}
We first introduce an auxiliary SDE associated with the objective functional in (\ref{eq_2}):
\begin{align}
\label{eq_5}
\begin{cases}
\dd y_{s; t,a,b}^{u,\alpha,\beta} = - 	l(s,x_{s}^{t,a;u},u_s) \dd s + \alpha_s^\top \dd B_s + \int_{E} \beta_s(e)  \tilde{N}(\dd e, \dd s),~ s \in (t,T] \\
y_{t; t,a,b}^{u,\alpha,\beta} = b,
\end{cases} 
\end{align}
where $b \in \mathbb{R}$, $u \in \mathcal{U}_{t,T}$, $\alpha\in \mathcal{L}_{\mathbb{F}}^2(t,T;\mathbb{R}^r) =: \mathcal{A}_{t,T}$ and $\beta \in \mathcal{G}_{\mathbb{F}}^2(t,T,\pi;\mathbb{R}) =: \mathcal{B}_{t,T}$. The following estimates hold for  (\ref{eq_5}). The proof is similar to that for Lemma \ref{Lemma_1}. 

\begin{lemma}\label{Lemma_2}
Suppose that Assumptions \ref{Assumption_1} and \ref{Assumption_2} hold. Then:
\begin{enumerate}[(i)]
	\item For any $(u,\alpha,\beta) \in \mathcal{U}_{t,T} \times \mathcal{A}_{t,T} \times \mathcal{B}_{t,T}$ and $(a,b) \in \mathbb{R}^{n+1}$, there is a unique $\mathbb{F}$-adapted c\`adl\`ag process such that (\ref{eq_5}) holds;
	\item For any $(u,\alpha,\beta) \in \mathcal{U}_{t,T} \times \mathcal{A}_{t,T} \times \mathcal{B}_{t,T}$, $(a,b) \in \mathbb{R}^{n+1}$, $(a^\prime,b^\prime) \in \mathbb{R}^{n+1}$, and $t,t^\prime \in [0,T]$ with $t \leq t^\prime$, there exists a constant $C> 0$ such that
	\begin{align*}
	\mathbb{E} \Bigl [ \sup_{s \in [t,T]} | y_{s;t,a,b}^{u,\alpha,\beta} - y_{s;t,a^\prime,b^\prime}^{u,\alpha,\beta} |^2 \Bigr ] & \leq C (|a-a^\prime|^2 + |b-b^\prime|^2) \\
		\mathbb{E} \Bigl [ \sup_{s \in [t^\prime,T]} | y_{s;t,a,b}^{u,\alpha,\beta} - y_{s;t^\prime,a,b}^{u,\alpha,\beta} |^2 \Bigr ] & \leq C (1+|a|^2+|b|^2)|t^\prime - t|.
	\end{align*}
\end{enumerate}	
\end{lemma}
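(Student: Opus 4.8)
The plan is to mirror the proof of Lemma \ref{Lemma_1} (given in Appendix \ref{Appendix_A}), but to exploit the fact that the auxiliary SDE (\ref{eq_5}) is considerably simpler: its drift depends on the state only through the exogenous process $x_s^{t,a;u}$, and its diffusion and jump coefficients are the controls $\alpha$ and $\beta$ themselves, which do not depend on $y$ at all. Consequently there is no fixed-point/contraction argument needed for existence: given $(u,\alpha,\beta)$ and $(a,b)$, the process $y_{s;t,a,b}^{u,\alpha,\beta}$ is simply defined by the explicit formula
\begin{align*}
y_{s;t,a,b}^{u,\alpha,\beta} = b - \int_t^s l(r,x_r^{t,a;u},u_r)\,\dd r + \int_t^s \alpha_r^\top \dd B_r + \int_t^s\!\!\int_E \beta_r(e)\,\tilde{N}(\dd e,\dd r).
\end{align*}
For (i) I would check that each term on the right is well defined: the Lebesgue integral is finite because $l$ has linear growth (Assumption \ref{Assumption_2}(i)) and $\mathbb{E}[\sup_s|x_s^{t,a;u}|^2]<\infty$ by (\ref{eq_1_1}); the Brownian stochastic integral is a well-defined square-integrable martingale since $\alpha\in\mathcal{A}_{t,T}=\mathcal{L}^2_{\mathbb{F}}(t,T;\mathbb{R}^r)$; and the compensated Poisson integral is a well-defined square-integrable martingale since $\beta\in\mathcal{B}_{t,T}=\mathcal{G}^2_{\mathbb{F}}(t,T,\pi;\mathbb{R})$. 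Right-continuity with left limits is inherited from the c\`adl\`ag modification of the stochastic integral against $\tilde N$, and uniqueness is immediate from the closed-form representation.

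For (ii) the key tool is the Burkholder--Davis--Gundy inequality applied separately to the Brownian and the compensated-Poisson martingale parts, together with the Cauchy--Schwarz (or Jensen) inequality for the drift term. Subtracting the representations for $(a,b)$ and $(a',b')$, the stochastic integral terms cancel (the controls $\alpha,\beta$ are the same), so
\begin{align*}
y_{s;t,a,b}^{u,\alpha,\beta} - y_{s;t,a',b'}^{u,\alpha,\beta} = (b-b') - \int_t^s \bigl[l(r,x_r^{t,a;u},u_r) - l(r,x_r^{t,a';u},u_r)\bigr]\,\dd r,
\end{align*}
and taking $\sup_s$, squaring, using $|l(r,x,u)-l(r,x',u)|\le L|x-x'|$, and then (\ref{eq_1_2}) yields the first estimate. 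For the second estimate I would write, for $t\le t'\le s$,
\begin{align*}
y_{s;t,a,b}^{u,\alpha,\beta} - y_{s;t',a,b}^{u,\alpha,\beta} = \Bigl(-\int_t^{t'} l\,\dd r + \int_t^{t'}\alpha_r^\top\dd B_r + \int_t^{t'}\!\!\int_E\beta_r(e)\tilde N(\dd e,\dd r)\Bigr) - \int_{t'}^s\bigl[l(r,x_r^{t,a;u},u_r)-l(r,x_r^{t',a;u},u_r)\bigr]\dd r,
\end{align*}
and bound the first parenthesis by $C(1+|a|^2)|t'-t|$ using BDG, the isometries, the linear growth of $l$, and (\ref{eq_1_1}), while the last term is handled by the Lipschitz property of $l$ and (\ref{eq_1_3}), which already carries the factor $|t'-t|$.

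I do not expect a genuine obstacle here, which is why the paper says ``the proof is similar to that for Lemma \ref{Lemma_1}''; the only point requiring a little care is to make sure that all constants $C$ depend on $a$ (through $|a|^2$) and on the data of Assumptions \ref{Assumption_1}--\ref{Assumption_2} and on $T$, but not on the choice of $(u,\alpha,\beta)$ — this is automatic because the Lipschitz and growth constants for $l$ are uniform in $u$, and the estimates (\ref{eq_1_1})--(\ref{eq_1_3}) for the $x$-process are already uniform in $u$. One should also note that, unlike Lemma \ref{Lemma_1}, there is no bound of the form $\mathbb{E}[\sup_s|y_s|^2]\le C(1+|a|^2+|b|^2)$ claimed, and indeed none is needed for the later development, since the martingale parts driven by the unbounded controls $\alpha,\beta$ cannot be bounded uniformly; the lemma is deliberately stated only for the \emph{differences}, where those parts cancel.
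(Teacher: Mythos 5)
Your treatment of part (i) and of the first estimate in (ii) is correct and consistent with the paper's (unstated, ``similar to Lemma \ref{Lemma_1}'') argument: since the coefficients of (\ref{eq_5}) do not depend on $y$, the solution is the explicit stochastic integral you write down, no Gronwall step is needed, and in the spatial difference $y_{s;t,a,b}^{u,\alpha,\beta}-y_{s;t,a',b'}^{u,\alpha,\beta}$ the $\alpha$- and $\beta$-integrals cancel, so only the Lipschitz property of $l$ and (\ref{eq_1_2}) enter. That streamlining is legitimate and arguably cleaner than mimicking Appendix \ref{Appendix_A} verbatim.

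The gap is in the second (time-increment) estimate. There the martingale parts do \emph{not} cancel: you are left with the residual terms $\int_t^{t'}\alpha_r^\top\dd B_r+\int_t^{t'}\int_E\beta_r(e)\,\tilde N(\dd e,\dd r)$, whose second moments you claim to bound by $C(1+|a|^2)|t'-t|$ ``using BDG and the isometries''. But the isometry only yields $\mathbb{E}\int_t^{t'}|\alpha_r|^2\dd r$ and $\mathbb{E}\int_t^{t'}\int_E|\beta_r(e)|^2\pi(\dd e)\dd r$, and for a general $\alpha\in\mathcal{L}^2_{\mathbb{F}}(t,T;\mathbb{R}^r)$ these are $o(1)$ as $t'\downarrow t$ but not $O(|t'-t|)$: for instance $\alpha_r=(r-t)^{-1/4}$ is admissible and gives $\mathbb{E}\int_t^{t'}|\alpha_r|^2\dd r=2|t'-t|^{1/2}$, so no constant $C$ (even one depending on $\alpha$) delivers the claimed rate. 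A factor $|t'-t|$ can be extracted only if $\alpha$ and $\beta$ are bounded \emph{pointwise} ($\mathbb{P}\otimes\dd t$-a.e.), which is strictly stronger than the norm bounds $\|\alpha\|^2_{\mathcal{L}^2_{\mathbb{F}}}\le C(1+|a|^2)$, $\|\beta\|^2_{\mathcal{G}^2_{\mathbb{F}}}\le C(1+|a|^2)$ of Remark \ref{Remark_2} that you invoke. Note that this also undercuts the closing observation of your own write-up: the unboundedness of the controls, which you correctly say forces the lemma to be stated for differences, resurfaces in precisely this difference. To close the gap you must either add a pointwise bound on $(\alpha,\beta)$, or weaken the conclusion to something of the form $\mathbb{E}[\sup_{s\in[t',T]}|y_{s;t,a,b}^{u,\alpha,\beta}-y_{s;t',a,b}^{u,\alpha,\beta}|^2]\le C(1+|a|^2)|t'-t|+C\,\mathbb{E}\int_t^{t'}\bigl(|\alpha_r|^2+\|\beta_r\|^2_{G^2}\bigr)\dd r$, which still tends to $0$ as $t'\downarrow t$ for each fixed admissible pair but without a rate uniform in $(\alpha,\beta)$.
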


\begin{remark}\label{Remark_2}
We can impose explicit bounds for additional control variables $(\alpha,\beta) \in \mathcal{A}_{t,T} \times \mathcal{B}_{t,T}$. In particular, let $\tilde{J}(t,a;u) := \int_t^T l(s,x_{s}^{t,a;u},u_s) \dd s + m(x_{T}^{t,a;u})$. Since $\tilde{J} \in L^2(\Omega,\mathcal{F}_T;\mathbb{R})$, in view of the martingale representation theorem \cite[Theorem 5.3.5]{Applebaum_book}, there exist unique $(\alpha,\beta) \in \mathcal{A}_{t,T} \times \mathcal{B}_{t,T}$ such that
\begin{align*}
\tilde{J}(t,a;u) = J(t,a;u) + \int_t^T \alpha_s^\top \dd B_s + \int_t^T \int_{E} \beta_s(e) \tilde{N}(\dd e, \dd s),
\end{align*}
which implies
\begin{align*}
	& \int_t^T \alpha_s^\top \dd B_s + \int_t^T \int_{E} \beta_s(e) \tilde{N}(\dd e, \dd s) \\
	&  = \int_t^T l(s,x_{s}^{t,a;u},u_s) \dd s + m(x_{T}^{t,a;u}) - \mathbb{E} \Bigl [ \int_t^T l(s,x_{s}^{t,a;u},u_s) \dd s + m(x_{T}^{t,a;u}) \Bigr ].
\end{align*}
Then from (i) of Assumption \ref{Assumption_2}, the estimates in (ii) of Lemma \ref{Lemma_1}, and the fact that $\tilde{N}$ and $B$ are mutually independent, we have
\begin{align*}
\|\alpha\|_{\mathcal{L}^2_{\mathbb{F}}}^2 \leq C (1+|a|^2),~ \|\beta\|_{\mathcal{G}_{\mathbb{F}}^2}^2 \leq C (1+|a|^2).
\end{align*}
Hence, without loss of generality, we may restrict uniform bounded controls of $(\alpha,\beta)$ in $\mathcal{L}^2_{\mathbb{F}}$ and $\mathcal{G}_{\mathbb{F}}^2$ senses.
\end{remark}

For any function $m:\mathbb{R}^n \rightarrow \mathbb{R}$, let us define the \emph{epigraph} of $m$:
\begin{align*}
\mathcal{E}(m) := \{(x,y) \in \mathbb{R}^{n} \times \mathbb{R}~|~ y \geq m(x) \}	.
\end{align*}
Then we have the following equivalent expression of the value function in (\ref{eq_4}) in terms of the stochastic target problem with state constraints. Below, we drop ${t,T}$ in $\mathcal{U}_{t,T}$, $\mathcal{A}_{t,T}$ and $\mathcal{B}_{t,T}$ to simplify the notation.

\begin{lemma}\label{Lemma_3}
Assume that Assumptions \ref{Assumption_1} and \ref{Assumption_2} hold. Then:
\begin{align}
\label{eq_9}
V(t,a) & = \inf \{ b \geq 0~|~ \exists (u,\alpha,\beta ) \in \mathcal{U} \times \mathcal{A} \times \mathcal{B}~\text{such that} \\
 	& \qquad \quad (x_T^{t,a;u},y_{T;t,a,b}^{u,\alpha,\beta}) \in \mathcal{E}(m),~ \text{$\mathbb{P}$-a.s.}~ \text{and}~x_{s}^{t,a;u} \in \Gamma,~ \forall s \in [t,T],~ \text{$\mathbb{P}$-a.s.} \}  	 \nonumber
\end{align}	
\end{lemma}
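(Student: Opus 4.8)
\textbf{Proof plan for Lemma \ref{Lemma_3}.}

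The plan is to prove the two inequalities between $V(t,a)$ and the right-hand side of (\ref{eq_9}), which I will call $\vartheta(t,a)$. First I would establish $V(t,a) \geq \vartheta(t,a)$. Fix any admissible $u \in \mathcal{U}_{t,T}$ with $x_s^{t,a;u} \in \Gamma$ for all $s \in [t,T]$, $\mathbb{P}$-a.s., and set $b := J(t,a;u) \geq 0$ (nonnegativity by Remark \ref{Remark_1}). The key device is the martingale representation theorem for the Lévy filtration \cite[Theorem 5.3.5]{Applebaum_book}, exactly as in Remark \ref{Remark_2}: writing $\tilde{J}(t,a;u) = \int_t^T l(s,x_s^{t,a;u},u_s)\dd s + m(x_T^{t,a;u}) \in L^2(\Omega,\mathcal{F}_T;\mathbb{R})$, there exist $(\alpha,\beta) \in \mathcal{A}_{t,T} \times \mathcal{B}_{t,T}$ with
\begin{align*}
\tilde{J}(t,a;u) = J(t,a;u) + \int_t^T \alpha_s^\top \dd B_s + \int_t^T \int_E \beta_s(e) \tilde{N}(\dd e, \dd s).
\end{align*}
Comparing this with the integrated form of the auxiliary SDE (\ref{eq_5}) started at $y_{t;t,a,b}^{u,\alpha,\beta} = b$, namely $y_{T;t,a,b}^{u,\alpha,\beta} = b - \int_t^T l(s,x_s^{t,a;u},u_s)\dd s + \int_t^T \alpha_s^\top \dd B_s + \int_t^T\int_E \beta_s(e)\tilde{N}(\dd e,\dd s)$, one gets $y_{T;t,a,b}^{u,\alpha,\beta} = b - \tilde{J}(t,a;u) + J(t,a;u) + \tilde{J}(t,a;u) - J(t,a;u)$ — more carefully, substituting the representation yields $y_{T;t,a,b}^{u,\alpha,\beta} = b - J(t,a;u) + m(x_T^{t,a;u}) = m(x_T^{t,a;u})$ when $b = J(t,a;u)$, hence $y_{T;t,a,b}^{u,\alpha,\beta} \geq m(x_T^{t,a;u})$, $\mathbb{P}$-a.s., i.e. $(x_T^{t,a;u}, y_{T;t,a,b}^{u,\alpha,\beta}) \in \mathcal{E}(m)$. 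Since the state constraint holds for this $u$, the triple $(u,\alpha,\beta)$ is feasible for $\vartheta(t,a)$ with $b = J(t,a;u)$, so $\vartheta(t,a) \leq J(t,a;u)$; taking the infimum over admissible $u$ gives $\vartheta(t,a) \leq V(t,a)$.

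For the reverse inequality $V(t,a) \leq \vartheta(t,a)$, fix $b \geq 0$ and a feasible triple $(u,\alpha,\beta) \in \mathcal{U}_{t,T} \times \mathcal{A}_{t,T} \times \mathcal{B}_{t,T}$, so that $y_{T;t,a,b}^{u,\alpha,\beta} \geq m(x_T^{t,a;u})$, $\mathbb{P}$-a.s., and $x_s^{t,a;u} \in \Gamma$ for all $s \in [t,T]$, $\mathbb{P}$-a.s. Then $u$ is admissible for the constrained problem defining $V$. Taking expectations in the integrated form of (\ref{eq_5}) and using that $\int_t^\cdot \alpha_s^\top \dd B_s$ and $\int_t^\cdot \int_E \beta_s(e)\tilde{N}(\dd e,\dd s)$ are true (square-integrable) martingales with zero mean — which follows from $\alpha \in \mathcal{A}_{t,T}$, $\beta \in \mathcal{B}_{t,T}$ and the isometry bounds in the definitions of $\mathcal{L}^2_\mathbb{F}$ and $\mathcal{G}^2_\mathbb{F}$ — we obtain
\begin{align*}
\mathbb{E}[y_{T;t,a,b}^{u,\alpha,\beta}] = b - \mathbb{E}\Bigl[\int_t^T l(s,x_s^{t,a;u},u_s)\dd s\Bigr].
\end{align*}
Combining with the target inequality and monotonicity of expectation, $b = \mathbb{E}[y_{T;t,a,b}^{u,\alpha,\beta}] + \mathbb{E}[\int_t^T l\,\dd s] \geq \mathbb{E}[m(x_T^{t,a;u})] + \mathbb{E}[\int_t^T l(s,x_s^{t,a;u},u_s)\dd s] = J(t,a;u) \geq V(t,a)$, where the last inequality uses that $u$ is admissible. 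Taking the infimum over all feasible $(b,u,\alpha,\beta)$ yields $\vartheta(t,a) \geq V(t,a)$, completing the proof.

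The main technical point to be careful about — and the step I expect to require the most attention — is the martingale property of the stochastic integrals and the applicability of the martingale representation theorem: one must verify that $\tilde{J}(t,a;u)$ is genuinely in $L^2$ (this uses the linear-growth bound on $l,m$ from (i) of Assumption \ref{Assumption_2} together with the moment estimate (\ref{eq_1_1}) of Lemma \ref{Lemma_1}), that the representing integrands land in the prescribed spaces $\mathcal{A}_{t,T}$ and $\mathcal{B}_{t,T}$ (as already recorded in Remark \ref{Remark_2}), and that for a generic feasible $(\alpha,\beta)$ in the definition of $\vartheta$ the integrals are honest martingales so the expectation computation is valid. Everything else is a direct identification of the integrated auxiliary dynamics with the martingale representation, plus elementary manipulations with the epigraph constraint $(x,y) \in \mathcal{E}(m) \iff y \geq m(x)$. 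No localization subtleties arise because $\pi(E) < \infty$ and the $\mathcal{G}^2_\mathbb{F}$-norm controls the jump integral directly.
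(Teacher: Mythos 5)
Your proposal is correct and follows essentially the same route as the paper: the direction $V\leq\vartheta$ uses the zero-mean martingale property of the stochastic integrals to pass from $y_{T;t,a,b}^{u,\alpha,\beta}\geq m(x_T^{t,a;u})$ to $b\geq J(t,a;u)$, and the direction $V\geq\vartheta$ uses the martingale representation theorem exactly as in Remark \ref{Remark_2} to build a feasible $(\alpha,\beta)$. The only difference is presentational — the paper establishes the equivalence of the two feasibility statements for each fixed $b\geq 0$ and then identifies the two infima, whereas you prove the two inequalities between the infima directly by taking $b=J(t,a;u)$ — but the underlying argument is identical.
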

\begin{remark}\label{Remark_1_1_1_1}
	We note that (\ref{eq_9}) is the stochastic target problem for jump diffusion systems with state constraints; see \cite{Bouchard_SPTA_2002, Bouchard_SCL_2012, Bouchard_Elie_SICON_2010, Moreau_SICON_2011, Soner_SICON_2002}.
\end{remark}

\begin{proof}[Proof of Lemma \ref{Lemma_3}]
It is easy to see that 
	\begin{align}
	\label{eq_10_1_1}
V(t,a) = & \inf \{ b \geq 0~|~ \exists u \in \mathcal{U}~\text{such that}~  b \geq J(t,a;u) \\
 	& \qquad \qquad \text{and}~ x_{s}^{t,a;u} \in \Gamma,~ \forall s \in [t,T], ~ \text{$\mathbb{P}$-a.s.} \} \nonumber
\end{align}
As discussed in \cite{Bouchard_SCL_2012} and \cite{Bokanowski_SICON_2016}, we consider the following two statements: for $b \geq 0$,
\begin{enumerate}[(a)]
\setlength{\itemindent}{1.2em}
\item There exists $u \in \mathcal{U}$ such that $b \geq J(t,a;u)$ and $x_{s}^{t,a;u} \in \Gamma$ for $ s \in [t,T]$, $\mathbb{P}$-a.s.;
\item There exist $(u,\alpha,\beta ) \in \mathcal{U} \times \mathcal{A} \times \mathcal{B}$ such that $y_{T; t,a,b}^{u,\alpha,\beta} \geq m(x_{T}^{t,a;u})$, $\mathbb{P}$-a.s. and $x_{s}^{t,a;u} \in \Gamma$ for $s \in [t,T]$, $\mathbb{P}$-a.s.
\end{enumerate}
Note that (a) corresponds to (\ref{eq_10_1_1}), while (\ref{eq_9}) is equivalent to (b). Then it is necessary to show the equivalence between (a) and (b).

First, from (b), there exist $(u,\alpha,\beta ) \in \mathcal{U} \times \mathcal{A} \times \mathcal{B}$ such that $y_{T; t,a,b}^{u,\alpha,\beta} \geq m(x_{T}^{t,a;u})$ and by (\ref{eq_5}),
\begin{align}
\label{eq_11}
b \geq   m(x_{T}^{t,a;u}) + \int_t^T l(s,x_{s}^{t,a;u},u_s) \dd s  - \int_t^T \alpha_s^\top \dd B_s - \int_t^T \int_{E} \beta_s(e)  \tilde{N}(\dd e, \dd s).	 
\end{align}
Since the stochastic integrals $\int_t^r \alpha_s^\top \dd B_s$ and $\int_t^r \int_{E} \beta_s(e) \tilde{N}(\dd e, \dd s)$ are $\mathcal{F}_r$-martingales, by taking the expectation in (\ref{eq_11}), we get $b \geq J(t,a;u)$. Hence, (b) implies (a).

On the other hand, let $\tilde{J}(t,a;u) := \int_t^T l(s,x_{s}^{t,a;u},u_s) \dd s + m(x_{T}^{t,a;u})$. Since $\tilde{J} \in L^2(\Omega,\mathcal{F}_T;\mathbb{R})$, in view of the martingale representation theorem \cite[Theorem 5.3.5]{Applebaum_book}, there exist unique $(\tilde{\alpha},\tilde{\beta} ) \in  \mathcal{A} \times \mathcal{B}$ such that
\begin{align*}
	\tilde{J}(t,a;u) = J(t,a;u) + \int_t^T \tilde{\alpha}_s^\top \dd B_s + \int_t^T \int_{E} \tilde{\beta}_s(e) \tilde{N}(\dd e, \dd s).
\end{align*}
Then from (a), for $b \geq 0$,
\begin{align*}
b \geq & J(t,a;u)  \\
& = 	\int_t^T l(s,x_{s}^{t,a;u},u_s) \dd s + m(x_{T}^{t,a;u})  - \int_t^T \tilde{\alpha}_s^\top \dd B_s - \int_t^T \int_{E} \tilde{\beta}_s (e) \tilde{N}(\dd e, \dd s),
\end{align*}
which, together with (\ref{eq_5}), shows that $y_{T; t,a,b}^{u,\tilde{\alpha},\tilde{\beta}} \geq m(x_{T}^{t,a;u})$. Hence, (a) implies (b). This completes the proof.
\end{proof}

We now introduce the \emph{backward reachable set}
\begin{align}
\label{eq_9_1}
\mathcal{R}_{t}^{\Gamma} & := \{ (a,b) \in \mathbb{R}^n \times \mathbb{R}~|~\exists (u,\alpha,\beta) \in \mathcal{U} \times \mathcal{A} \times \mathcal{B}~\text{such that} \\
&\qquad  (x_T^{t,a;u},y_{T;t,a,b}^{u,\alpha,\beta}) \in \mathcal{E}(m),~ \text{$\mathbb{P}$-a.s. and} ~ x_s^{t,a;u} \in \Gamma,~ \forall s \in [t,T],~ \text{$\mathbb{P}$-a.s.} \}	 \nonumber
\end{align}
Clearly, based on Lemma \ref{Lemma_3}, we have the following result:
\begin{theorem}\label{Corollary_1}
Assume that Assumptions \ref{Assumption_1} and \ref{Assumption_2} hold. For any $(t,a) \in [0,T] \times \mathbb{R}^n$, 
\begin{align}
\label{eq_3_5_1_2_1_2}
V(t,a) = \inf \{ b \geq 0~|~ (a,b) \in \mathcal{R}_{t}^{\Gamma} \}.
\end{align}
\end{theorem}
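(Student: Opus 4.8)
The plan is to derive \eqref{eq_3_5_1_2_1_2} directly from Lemma \ref{Lemma_3} together with the definition \eqref{eq_9_1} of $\mathcal{R}_t^\Gamma$, essentially by matching the two descriptions verbatim. First I would observe that the condition appearing inside the infimum in \eqref{eq_9}, namely the existence of $(u,\alpha,\beta) \in \mathcal{U} \times \mathcal{A} \times \mathcal{B}$ with $(x_T^{t,a;u}, y_{T;t,a,b}^{u,\alpha,\beta}) \in \mathcal{E}(m)$ $\mathbb{P}$-a.s.\ and $x_s^{t,a;u} \in \Gamma$ for all $s \in [t,T]$ $\mathbb{P}$-a.s., is \emph{exactly} the membership condition defining $(a,b) \in \mathcal{R}_t^\Gamma$ in \eqref{eq_9_1}. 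Hence for each fixed $(t,a)$ the set $\{ b \geq 0 \mid \exists (u,\alpha,\beta) \text{ such that } \dots \}$ over which the infimum in \eqref{eq_9} is taken coincides with $\{ b \geq 0 \mid (a,b) \in \mathcal{R}_t^\Gamma \}$.

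With this identification of the two feasible sets, the infima over them are equal, so $V(t,a) = \inf\{ b \geq 0 \mid (a,b) \in \mathcal{R}_t^\Gamma \}$, which is \eqref{eq_3_5_1_2_1_2}. I would also remark, for completeness, that the case where the feasible set is empty is handled by the usual convention $\inf \emptyset = +\infty$ consistently on both sides; and conversely, when $a \in \Gamma$, the state constraint at time $s=t$ is satisfiable (take $b$ large), so by Assumption \ref{Assumption_2}(i) (linear growth of $l$ and $m$) and Lemma \ref{Lemma_1}(ii) one sees the feasible set is nonempty for $b$ large enough, consistent with $V(t,a) < \infty$ — though this finiteness is not strictly needed for the statement itself.

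Since the result is labeled a \emph{theorem} but is essentially a restatement of Lemma \ref{Lemma_3}, I do not anticipate any genuine obstacle; the only point requiring a sentence of care is making the translation between the ``$\exists$'' quantifier inside the $\inf$ in \eqref{eq_9} and the set-membership formulation \eqref{eq_9_1} fully explicit, and checking that the range $b \geq 0$ (rather than $b \in \mathbb{R}$) is correctly inherited — which it is, since both \eqref{eq_9} and \eqref{eq_3_5_1_2_1_2} restrict to $b \geq 0$, while $\mathcal{R}_t^\Gamma$ itself is defined for $b \in \mathbb{R}$ and we simply intersect with $\{b \geq 0\}$. I expect the author's proof to be a two-line argument invoking Lemma \ref{Lemma_3} and the definition of $\mathcal{R}_t^\Gamma$, which is why it is stated as a corollary-like ``Clearly, based on Lemma \ref{Lemma_3}'' result.
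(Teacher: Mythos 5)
Your proposal is correct and matches the paper exactly: the paper gives no separate proof of Theorem \ref{Corollary_1}, stating only ``Clearly, based on Lemma \ref{Lemma_3}, we have the following result,'' since the feasibility condition inside the infimum in \eqref{eq_9} is verbatim the membership condition defining $\mathcal{R}_t^\Gamma$ in \eqref{eq_9_1}. Your additional remarks on the $\inf\emptyset$ convention and the restriction to $b \geq 0$ are harmless elaborations of the same identification.
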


\begin{remark}\label{Remark_3}
From Theorem \ref{Corollary_1}, we observe that the value function in (\ref{eq_4}) can be characterized by the backward reachable set $\mathcal{R}_{t}^{\Gamma}$. In the next subsection, we focus on an explicit characterization of $\mathcal{R}_{t}^{\Gamma}$ as the zero-level set of the value function for the unconstrained auxiliary stochastic control problem.  	
\end{remark}

\subsection{Characterization of Backward Reachable Set}

Let
\begin{align*}
\bar{J}(t,a,b;u,\alpha,\beta) = \mathbb{E} \Bigl [ \max \{ m(x_T^{t,a;u}) - y_{T;t,a,b}^{u,\alpha,\beta}, 0 \}  + \int_t^T d(x_s^{t,a;u},\Gamma) \dd s \Bigr ],
\end{align*}
where we introduce the following \emph{distance function} on $\mathbb{R}^n$ to $\mathbb{R}^+$:
\begin{align*}
d(x,\Gamma) = 0~ \text{if and only if}~ x \in \Gamma.	
\end{align*}
Then the auxiliary value function $W: [0,T] \times \mathbb{R}^n \times \mathbb{R} \rightarrow \mathbb{R}$ can be defined as follows:
\begin{align}
\label{eq_10}
W(t,a,b) & :=  \mathop{\inf_{u \in \mathcal{U}}}_{\alpha \in \mathcal{A},\beta \in \mathcal{B}} \bar{J}(t,a,b;u,\alpha,\beta),~\text{subject to (\ref{eq_1}) and (\ref{eq_5}).}
\end{align}
Note that (\ref{eq_10}) does not have any state constraints. 

\begin{assumption}\label{Assumption_3}
$d(x,\Gamma)$ is Lipschitz continuous in $x$ with the Lipschitz constant $L$ and satisfies the linear growth condition in $x$.
\end{assumption}

\begin{remark}\label{Remark_4}
Examples of $d(x,\Gamma$) are $d(x,\Gamma)	= \inf_{y \in \Gamma} | x - y|$ and $d(x,\Gamma) = x - x\mathds{1}_{x \succeq 0}$, where $\mathds{1}$ is an indicator function and $\succeq$ is the componentwise inequality. Clearly, they hold Assumption \ref{Assumption_3}.
\end{remark}

The following theorem shows the equivalent expression of $V$ in terms of the zero-level set of $W$.

\begin{theorem}\label{Theorem_1}
Suppose that Assumptions \ref{Assumption_1}, \ref{Assumption_2} and \ref{Assumption_3} hold and that there exists an optimal control such that it attains the minimum of the auxiliary optimal control problem in (\ref{eq_10}). Then:
\begin{enumerate}[(i)]
	\item The reachable set can be obtained by 
	\begin{align*}
	\mathcal{R}_t^\Gamma = \{(a,b) \in \mathbb{R}^n \times \mathbb{R}~|~ W(t,a,b) = 0 \},~ \forall t \in [0,T];
	\end{align*}
	\item The value function $V$ in (\ref{eq_4}) can be characterized by the zero-level set of $W$: for $(t,a) \in [0,T] \times \mathbb{R}^n$, 
	\begin{align}
	\label{eq_3_7_5_3_2_1_1_1}
		V(t,a) = \inf \{b \geq 0~| (a,b) \in \mathcal{R}_t^\Gamma \} = \inf \{b \geq 0~|~ W(t,a,b) = 0, a \in \mathbb{R}^n \}.
	\end{align}
\end{enumerate}
\end{theorem}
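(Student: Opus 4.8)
The plan is to establish (i) first, since (ii) then follows immediately by combining (i) with Theorem \ref{Corollary_1} (which already gives $V(t,a) = \inf\{b \geq 0 \mid (a,b) \in \mathcal{R}_t^\Gamma\}$). For (i), I would prove the two inclusions separately. For the inclusion $\mathcal{R}_t^\Gamma \subseteq \{(a,b) \mid W(t,a,b) = 0\}$: take $(a,b) \in \mathcal{R}_t^\Gamma$, so there exists $(u,\alpha,\beta) \in \mathcal{U} \times \mathcal{A} \times \mathcal{B}$ with $y_{T;t,a,b}^{u,\alpha,\beta} \geq m(x_T^{t,a;u})$ $\mathbb{P}$-a.s. and $x_s^{t,a;u} \in \Gamma$ for all $s \in [t,T]$ $\mathbb{P}$-a.s. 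Then $\max\{m(x_T^{t,a;u}) - y_{T;t,a,b}^{u,\alpha,\beta}, 0\} = 0$ $\mathbb{P}$-a.s., and since $x_s^{t,a;u} \in \Gamma$ forces $d(x_s^{t,a;u},\Gamma) = 0$ for a.e. $s$ (using that $x^{t,a;u}$ is c\`adl\`ag so the exceptional set of times has measure zero), we get $\bar J(t,a,b;u,\alpha,\beta) = 0$; hence $W(t,a,b) \leq 0$. Combined with $W \geq 0$ (immediate from $\bar J \geq 0$ since both the $\max\{\cdot,0\}$ term and $d(\cdot,\Gamma) \geq 0$ are nonnegative), this gives $W(t,a,b) = 0$.

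For the reverse inclusion $\{(a,b) \mid W(t,a,b) = 0\} \subseteq \mathcal{R}_t^\Gamma$: suppose $W(t,a,b) = 0$. Here is where the hypothesis that an optimal control attains the infimum in (\ref{eq_10}) is used — let $(u^*,\alpha^*,\beta^*)$ be optimal, so $\bar J(t,a,b;u^*,\alpha^*,\beta^*) = 0$. Since $\bar J$ is an expectation of a sum of two nonnegative random quantities, each must vanish: $\mathbb{E}[\max\{m(x_T^{t,a;u^*}) - y_{T;t,a,b}^{u^*,\alpha^*,\beta^*},0\}] = 0$ and $\mathbb{E}[\int_t^T d(x_s^{t,a;u^*},\Gamma)\dd s] = 0$. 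The first gives $y_{T;t,a,b}^{u^*,\alpha^*,\beta^*} \geq m(x_T^{t,a;u^*})$ $\mathbb{P}$-a.s. The second gives $d(x_s^{t,a;u^*},\Gamma) = 0$ for $\dd s \times \dd\mathbb{P}$-a.e. $(s,\omega)$; to upgrade this to \emph{all} $s \in [t,T]$ $\mathbb{P}$-a.s., I would invoke right-continuity of $s \mapsto x_s^{t,a;u^*}$ together with continuity of $d(\cdot,\Gamma)$ (Assumption \ref{Assumption_3}): the map $s \mapsto d(x_s^{t,a;u^*},\Gamma)$ is then c\`adl\`ag, vanishes Lebesgue-a.e., hence vanishes at every right-continuity point, i.e. everywhere; closedness of $\Gamma$ then yields $x_s^{t,a;u^*} \in \Gamma$ for all $s$. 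Therefore $(a,b) \in \mathcal{R}_t^\Gamma$, proving the inclusion and hence (i).

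Finally, (ii): by (i), $\{b \geq 0 \mid (a,b) \in \mathcal{R}_t^\Gamma\} = \{b \geq 0 \mid W(t,a,b) = 0\}$ for each fixed $a$, so the two infima in (\ref{eq_3_7_5_3_2_1_1_1}) coincide, and Theorem \ref{Corollary_1} identifies the first with $V(t,a)$. The main obstacle I anticipate is the measure-zero-to-everywhere upgrade for the state-constraint: one must be careful that the null set of times (and the null set in $\Omega$) does not interact badly with the requirement "$\forall s \in [t,T]$, $\mathbb{P}$-a.s.", and the c\`adl\`ag regularity from Lemma \ref{Lemma_1}(i) is exactly what makes this work — without path regularity the equivalence between "$d(x_s,\Gamma) = 0$ for a.e.\ $s$" and "$x_s \in \Gamma$ for all $s$" would fail. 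A secondary subtlety is justifying that the infimum being zero is actually attained (rather than only approached), which is why the theorem explicitly assumes existence of an optimal control; without it one would only get $(a,b)$ in the closure of $\mathcal{R}_t^\Gamma$, and I would flag that the stated hypothesis is precisely what circumvents this.
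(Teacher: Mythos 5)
Your proposal is correct and follows essentially the same route as the paper: (ii) is reduced to (i) via Theorem \ref{Corollary_1}, the forward inclusion uses nonnegativity of $\bar{J}$, and the reverse inclusion invokes the assumed optimal control together with the fact that a nonnegative random variable of zero expectation vanishes almost surely. If anything, your write-up is slightly more careful than the paper's at two points: the explicit zero-expectation argument (the paper instead argues by a trichotomy of "$<0$ a.s. / $>0$ a.s. / $=0$ a.s." that is not exhaustive as stated), and the c\`adl\`ag upgrade from ``$d(x_s,\Gamma)=0$ for a.e.\ $s$'' to ``for all $s$,'' which the paper passes over silently.
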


\begin{remark}\label{Remark_5}
In Sections \ref{Section_4} and \ref{Section_5}, we show that $W$ is a unique viscosity solution of the associated Hamilton-Jacobi-Bellman (HJB) equation. Hence, from Theorem \ref{Theorem_1} (particularly (\ref{eq_3_7_5_3_2_1_1_1})), the value function of the state-constrained problem $V$ in (\ref{eq_4}) can be obtained by solving the HJB equation of $W$.
\end{remark}

\begin{proof}[Proof of Theorem \ref{Theorem_1}]
	From (\ref{eq_3_5_1_2_1_2}) in Theorem \ref{Corollary_1}, we see that (ii) follows from (i). Hence, we prove (i). Recall $\mathcal{R}_t^\Gamma$ defined in (\ref{eq_9_1}):
	\begin{align*}
\mathcal{R}_{t}^{\Gamma} & := \{ (a,b) \in \mathbb{R}^n \times \mathbb{R}~|~\exists (u,\alpha,\beta) \in \mathcal{U} \times \mathcal{A} \times \mathcal{B}~\text{such that} \\
&\qquad  (x_T^{t,a;u},y_{T;t,a,b}^{u,\alpha,\beta}) \in \mathcal{E}(m),~ \text{$\mathbb{P}$-a.s. and} ~ x_s^{t,a;u} \in \Gamma,~ \forall s \in [t,T],~ \text{$\mathbb{P}$-a.s.} \},	 \nonumber
\end{align*}
and let $\bar{\mathcal{R}}_t^\Gamma := \{(a,b) \in \mathbb{R}^n \times \mathbb{R}~|~ W(t,a,b) = 0 \}$. 
	We will show that $\mathcal{R}_t^\Gamma \subseteq \bar{\mathcal{R}}_t^\Gamma$ and  $\mathcal{R}_t^\Gamma \supseteq \bar{\mathcal{R}}_t^\Gamma$  for $t \in [0,T]$.

	Fix $(a,b) \in  \mathcal{R}_t^\Gamma$. By definition, there exist $(u,\alpha,\beta) \in \mathcal{U} \times \mathcal{A} \times \mathcal{B}$ such that
	\begin{align*}
		\max \{ m(x_T^{t,a;u}) - y_{T;t,a,b}^{u,\alpha,\beta}, 0 \} = 0~ \text{and}~
		d(x_s^{t,a;u},\Gamma) = 0,~ \forall s \in [t,T],~\text{$\mathbb{P}$-a.s.}
	\end{align*}
	This implies that $W(t,a,b) = 0$ for $t \in [0,T]$; hence, $\mathcal{R}_t^\Gamma \subseteq \bar{\mathcal{R}}_t^\Gamma$ for $t \in [0,T]$.

	Suppose that $(a,b) \in \bar{\mathcal{R}}_t^\Gamma$, i.e., $W(t,a,b) = 0$. Then due to the assumption of the existence of an optimal control given in the statement\footnote{In Appendix \ref{Appendix_C}, we discuss the existence of optimal controls for jump diffusion systems.}, there exist $(\bar{u},\bar{\alpha},\bar{\beta}) \in \mathcal{U} \times \mathcal{A} \times \mathcal{B}$ such that
	\begin{align*}
	W(t,a,b) = \mathbb{E} \Bigl [ \max \{ m(x_T^{t,a;\bar{u}}) - y_{T;t,a,b}^{\bar{u},\bar{\alpha},\bar{\beta}}, 0 \} + \int_t^T d(x_s^{t,a;\bar{u}},\Gamma) \dd s\Bigr ] = 0.	
	\end{align*}
	From the nonnegativity of $d(x,\Gamma)$ in Assumption \ref{Assumption_3}, we can see that $\max \{ m(x_T^{t,a;\bar{u}}) - y_{T;t,a,b}^{\bar{u},\bar{\alpha},\bar{\beta}}, 0 \} + \int_t^T d(x_s^{t,a;\bar{u}},\Gamma) \dd s  < 0$, $\mathbb{P}$-a.s. is not possible. If $\max \{ m(x_T^{t,a;\bar{u}}) - y_{T;t,a,b}^{\bar{u},\bar{\alpha},\bar{\beta}}, 0 \} + \int_t^T d(x_s^{t,a;\bar{u}},\Gamma) \dd s  > 0$, $\mathbb{P}$-a.s., then it contradicts $W(t,a,b) = 0$. Hence, we must have
	\begin{align*}
	\max \{ m(x_T^{t,a;\bar{u}}) - y_{T;t,a,b}^{\bar{u},\bar{\alpha},\bar{\beta}}, 0 \} + \int_t^T d(x_s^{t,a;\bar{u}},\Gamma) \dd s =0,~ \text{$\mathbb{P}$-a.s.},	
	\end{align*}
	which, together with the nonnegativity of $d(x,\Gamma)$, leads to
	\begin{align*}
		(x_T^{t,a;\bar{u}},y_{T;t,a,b}^{\bar{u},\bar{\alpha},\bar{\beta}}) \in \mathcal{E}(m)~ \text{and}~ x_s^{t,a;\bar{u}} \in \Gamma,~ \forall s \in [t,T],~ \text{$\mathbb{P}$-a.s.} 
	\end{align*}
This shows that $\bar{\mathcal{R}}_t^\Gamma \subseteq \mathcal{R}_t^\Gamma$  for $t \in [0,T]$. We complete the proof.
\end{proof}

\subsection{Properties of $W$}
We provide some useful properties of $W$ in (\ref{eq_10}). 

First, based on \cite[Theorem 3.3]{Touzi_Book} and \cite[Theorem 3.3, Chapter 4]{Yong_book}, we state the dynamic programming principle for $W$. This will be used in Section \ref{Section_4} to show the existence of the viscosity solution for the HJB equation.

\begin{proposition}\label{Proposition_1}
Assume that Assumptions \ref{Assumption_1}, \ref{Assumption_2} and \ref{Assumption_3} hold. Then for $(a,b) \in \mathbb{R}^n \times \mathbb{R}$ and $t \in [0,T]$ with $r \in (t,T]$, where $r$ is an $\mathbb{F}$-stopping time, $W$ satisfies the following dynamic programming principle (DPP):
\begin{align*}
W(t,a,b) & = \mathop{\inf_{u \in \mathcal{U}}}_{\alpha \in \mathcal{A},\beta \in \mathcal{B}} \mathbb{E} \Bigl [\int_{t}^{r} d(x_s^{t,a;u},\Gamma) \dd s  +  W(r,x_{r}^{t,a;u}, y_{r;t,a,b}^{u,\alpha,\beta}) \Bigr ].
\end{align*}
\end{proposition}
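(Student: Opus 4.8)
The plan is to establish the dynamic programming principle by following the standard two-inequality argument, adapted to the jump-diffusion setting with unbounded controls $(\alpha,\beta)$. First I would fix $(t,a,b)$ and an $\mathbb{F}$-stopping time $r \in (t,T]$, and denote by $W^*$ the right-hand side of the claimed identity. The key structural fact is the flow property of the controlled system: by uniqueness of solutions for \eqref{eq_1} and \eqref{eq_5} (Lemma \ref{Lemma_1}(i), Lemma \ref{Lemma_2}(i)) one has, for $s \ge r$, $x_s^{t,a;u} = x_s^{r,x_r^{t,a;u};u}$ and $y_{s;t,a,b}^{u,\alpha,\beta} = y_{s;r,x_r^{t,a;u},y_{r;t,a,b}^{u,\alpha,\beta}}^{u,\alpha,\beta}$, $\mathbb{P}$-a.s., together with the additivity $\int_t^T d(x_s^{t,a;u},\Gamma)\dd s = \int_t^r d(x_s^{t,a;u},\Gamma)\dd s + \int_r^T d(x_s^{r,x_r;u},\Gamma)\dd s$. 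I would also note the measurability/regularity of $W$ needed to make sense of $\mathbb{E}[W(r,x_r^{t,a;u},y_{r;t,a,b}^{u,\alpha,\beta})]$: from Lemma \ref{Lemma_1}(ii), Lemma \ref{Lemma_2}(ii) and Assumptions \ref{Assumption_2}, \ref{Assumption_3} one gets local Lipschitz and linear-growth estimates on $W$ in $(a,b)$ uniformly in controls, plus Hölder continuity in $t$, so $W$ is continuous and the composition is integrable.

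For the inequality $W(t,a,b) \le W^*$, I would fix $(u,\alpha,\beta) \in \mathcal{U} \times \mathcal{A} \times \mathcal{B}$, condition on $\mathcal{F}_r$, and use the tower property: by the flow property the conditional expectation of the cost-to-go from $r$, given $\mathcal{F}_r$, equals $W(r,x_r^{t,a;u},y_{r;t,a,b}^{u,\alpha,\beta})$ pathwise (this needs that the restriction of $(u,\alpha,\beta)$ to $[r,T]$ is, after conditioning, an admissible control for the problem started at $r$ — true since $\mathbb{F}$ is the natural filtration and both control classes are defined by square-integrability). Taking expectations and then the infimum over $(u,\alpha,\beta)$ gives one direction. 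For the reverse inequality $W(t,a,b) \ge W^*$, I would fix $\ep > 0$, choose on $[t,r]$ an arbitrary admissible $(u,\alpha,\beta)$, and then for each possible value of the state $(x_r^{t,a;u},y_{r;t,a,b}^{u,\alpha,\beta})$ select an $\ep$-optimal control on $[r,T]$ for the problem started there; a measurable selection argument (as in \cite[Theorem 3.3, Chapter 4]{Yong_book}, \cite[Theorem 3.3]{Touzi_Book}) concatenates these into a single admissible control on $[t,T]$ whose cost is within $\ep$ of $\mathbb{E}[\int_t^r d\,\dd s + W(r,x_r,y_r)]$; taking the infimum over the $[t,r]$-pieces and letting $\ep \downarrow 0$ closes the argument.

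The main obstacle is the measurable-selection / concatenation step in the reverse inequality, which in the jump-diffusion case with \emph{unbounded} control spaces $\mathcal{A}_{t,T} = \mathcal{L}^2_{\mathbb{F}}(t,T;\mathbb{R}^r)$ and $\mathcal{B}_{t,T} = \mathcal{G}^2_{\mathbb{F}}(t,T,\pi;\mathbb{R})$ is not entirely covered by the cited references, which typically assume compact control sets. The resolution is to invoke Remark \ref{Remark_2}: for the problem started at any point $(r,a',b')$ one may restrict to controls $(\alpha,\beta)$ with $\mathcal{L}^2_{\mathbb{F}}$- and $\mathcal{G}^2_{\mathbb{F}}$-norms bounded by $C(1+|a'|^2)$ without changing $W$, so on each event $\{|x_r^{t,a;u}| \le R\}$ one works with a control set that is effectively a fixed bounded (in particular separable, complete-metrizable) subset, on which the standard selection theorem applies; a countable exhaustion in $R$ together with the linear-growth integrability bounds patches the pieces together and controls the tails. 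With this in place the two inequalities combine to give the stated DPP, so I would present the argument in that order: flow property and regularity of $W$ first, then the easy inequality, then the $\ep$-optimal concatenation with the bounded-control reduction as the technical core.
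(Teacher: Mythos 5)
The paper does not actually prove Proposition \ref{Proposition_1}: it only states the DPP and points to \cite[Theorem 3.3]{Touzi_Book} and \cite[Theorem 3.3, Chapter 4]{Yong_book}. Your outline is therefore more explicit than the paper itself, and it is the standard two-inequality argument underlying those citations: flow property and regularity of $W$, the conditioning/tower step, and the $\ep$-optimal concatenation via measurable selection. Your use of Remark \ref{Remark_2} to replace the unbounded classes $\mathcal{A},\mathcal{B}$ by norm-bounded subsets so that the selection theorems for compact (or at least bounded, separable) control sets apply is exactly the right way to adapt those references to this setting.

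There are, however, two concrete errors in the write-up. First, the two inequalities are swapped relative to the arguments you give for them. Conditioning a fixed admissible $(u,\alpha,\beta)$ on $\mathcal{F}_r$ and using the tower property yields $\bar{J}(t,a,b;u,\alpha,\beta) \geq \mathbb{E}\bigl[\int_t^r d(x_s^{t,a;u},\Gamma)\,\dd s + W(r,x_r^{t,a;u},y_{r;t,a,b}^{u,\alpha,\beta})\bigr] \geq W^*$, and taking the infimum over controls gives $W(t,a,b) \geq W^*$, not $W \leq W^*$. Conversely, the $\ep$-optimal concatenation produces, for each choice of control on $[t,r]$, a full admissible control whose cost is at most $\mathbb{E}\bigl[\int_t^r d\,\dd s + W(r,x_r,y_r)\bigr]+\ep$, which yields $W(t,a,b) \leq W^* + \ep$, i.e.\ the other direction. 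Second, and relatedly, the claim that the conditional expectation of the cost-to-go given $\mathcal{F}_r$ ``equals $W(r,x_r,y_r)$ pathwise'' is false for an arbitrary fixed control: by the flow property it equals the cost functional of the \emph{restricted} control for the problem started at $(r,x_r,y_r)$, which only dominates $W(r,x_r,y_r)$; equality would require the restriction to be conditionally optimal. This is precisely why the conditioning step can only deliver ``$\geq$'' and why the opposite inequality genuinely needs the selection and concatenation machinery. With the labels corrected and ``equals'' replaced by ``$\geq$'', your outline is the standard and correct proof.
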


The following lemma shows the continuity of $W$.
\begin{lemma}\label{Lemma_4}
Suppose that Assumptions \ref{Assumption_1}, \ref{Assumption_2} and \ref{Assumption_3} hold. Then for $t \in [0,T]$, there exists a constant $C>0$ such that
\begin{enumerate}[(i)]
\item $|W(t,a,b)| \leq C(1 + |a|)$ for any $(a,b) \in \mathbb{R}^n \times [0,\infty)$;
\item $W$ is Lipschitz continuous in $\mathbb{R}^n \times \mathbb{R}$,  i.e., for $(a,b) \in \mathbb{R}^n \times \mathbb{R}$ and $(a^\prime,b^\prime) \in \mathbb{R}^n \times \mathbb{R}$,
\begin{align*}
|W(t,a,b) - W(t,a^\prime,b^\prime)| &\leq C(|a - a^\prime| + |b - b^\prime|);
\end{align*}
\item $W$ is continuous in $t \in [0,T]$.
\end{enumerate}
\end{lemma}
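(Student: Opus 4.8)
The plan is to establish the three estimates of Lemma \ref{Lemma_4} by combining the dynamic programming principle of Proposition \ref{Proposition_1} (or directly the definition in (\ref{eq_10})) with the stability estimates for the state and auxiliary state SDEs collected in Lemmas \ref{Lemma_1} and \ref{Lemma_2}, together with the Lipschitz and linear-growth hypotheses on $m$ and $d(\cdot,\Gamma)$ from Assumptions \ref{Assumption_2} and \ref{Assumption_3}. The guiding principle throughout is that $\bar J$ depends on $(a,b)$ only through $m(x_T^{t,a;u})$, $y_{T;t,a,b}^{u,\alpha,\beta}$ and $\int_t^T d(x_s^{t,a;u},\Gamma)\dd s$, each of which is Lipschitz (or of linear growth) in its state argument, and the state processes themselves are Lipschitz / of linear growth in $(a,b)$ uniformly in the controls.

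For (i), I would bound $W(t,a,b)$ from above by evaluating $\bar J$ at any fixed admissible triple $(u,\alpha,\beta)$ — using Remark \ref{Remark_2} one may even take $(\alpha,\beta)$ with norms controlled by $C(1+|a|)$ — and estimate $\mathbb{E}[\max\{m(x_T^{t,a;u})-y_{T;t,a,b}^{u,\alpha,\beta},0\}]\le \mathbb{E}[m(x_T^{t,a;u})]+\mathbb{E}[|y_{T;t,a,b}^{u,\alpha,\beta}-b|]+b$ via the linear growth of $m$, the estimate (\ref{eq_1_1}), and the martingale property of the stochastic integrals in (\ref{eq_5}); similarly $\mathbb{E}[\int_t^T d(x_s^{t,a;u},\Gamma)\dd s]\le C(1+|a|)$ by the linear growth in Assumption \ref{Assumption_3} and (\ref{eq_1_1}). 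For the lower bound, note $W(t,a,b)\ge 0$ by construction. Since $b$ lies in $[0,\infty)$ the contribution of $b$ is absorbed into the constant only if we are careful; more precisely one should observe that taking the infimum over $(\alpha,\beta)$ lets one choose the martingale term to (in expectation) cancel $b$, so the bound $C(1+|a|)$ holds uniformly in $b\ge 0$ — this is exactly the point where the unboundedness of the control sets $\mathcal{A},\mathcal{B}$ is used favorably.

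For (ii), fix a common control triple $(u,\alpha,\beta)$ and write $\bar J(t,a,b;u,\alpha,\beta)-\bar J(t,a',b';u,\alpha,\beta)$; using $|\max\{p,0\}-\max\{q,0\}|\le|p-q|$ and $|\,|r|-|r'|\,|\le|r-r'|$ termwise, this is bounded by $\mathbb{E}[|m(x_T^{t,a;u})-m(x_T^{t,a';u})|]+\mathbb{E}[|y_{T;t,a,b}^{u,\alpha,\beta}-y_{T;t,a',b'}^{u,\alpha,\beta}|]+\mathbb{E}[\int_t^T|d(x_s^{t,a;u},\Gamma)-d(x_s^{t,a';u},\Gamma)|\dd s]$. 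Applying the Lipschitz bounds on $m$ and $d$, then (\ref{eq_1_2}) and the first estimate in Lemma \ref{Lemma_2}(ii), and finally Cauchy--Schwarz to pass from $\mathbb{E}[|\cdot|]$ to $(\mathbb{E}[|\cdot|^2])^{1/2}$, yields $\le C(|a-a'|+|b-b'|)$ with $C$ independent of the controls; taking the infimum over both triples and using symmetry gives the claim. For (iii), the same scheme is used with $t'\in[t,T]$: one controls $W(t,a,b)-W(t',a,b)$ by freezing controls, matching the state trajectories via (\ref{eq_1_3}) and the second estimate in Lemma \ref{Lemma_2}(ii), and absorbing the extra running-cost piece $\mathbb{E}[\int_t^{t'}d(x_s^{t,a;u},\Gamma)\dd s]\le C(1+|a|)|t'-t|$ from the DPP of Proposition \ref{Proposition_1}; this gives a modulus of continuity of order $|t'-t|^{1/2}$ in $t$, whence continuity.

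The main obstacle, and the step deserving the most care, is the lower-bound half of (i) and, relatedly, ensuring that all constants $C$ are genuinely uniform in the \emph{unbounded} controls $(\alpha,\beta)\in\mathcal{A}\times\mathcal{B}$. Because $y_{T;t,a,b}^{u,\alpha,\beta}$ can be made arbitrarily large in $L^2$ by choosing large $\alpha$ or $\beta$, a naive bound on $\mathbb{E}[|y_T^{u,\alpha,\beta}|]$ is useless; the resolution is that in $\bar J$ the auxiliary state enters only through $\max\{m-y_T,0\}$, whose expectation is bounded \emph{below} trivially by $0$ and \emph{above} by $\mathbb{E}[(m(x_T^{t,a;u})-b)^+]$ plus the contribution of the zero-mean martingale part, which vanishes in expectation regardless of the size of $(\alpha,\beta)$; likewise the difference estimates in (ii)–(iii) only ever involve $y_{T;t,a,b}^{u,\alpha,\beta}-y_{T;t,a',b'}^{u,\alpha,\beta}$ for a \emph{fixed} $(\alpha,\beta)$, so the potentially large martingale parts cancel exactly and Lemma \ref{Lemma_2}(ii) applies with a control-independent constant. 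Making this cancellation explicit — rather than attempting to bound each term separately — is the conceptual crux of the proof.
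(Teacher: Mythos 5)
Your proposal follows essentially the same route as the paper: part (i) by evaluating $\bar J$ at a specific admissible triple and using nonnegativity for the lower bound, and parts (ii)--(iii) via $|\inf f-\inf g|\le\sup|f-g|$, the Lipschitz/growth hypotheses on $m$ and $d(\cdot,\Gamma)$, the stability estimates of Lemmas \ref{Lemma_1} and \ref{Lemma_2}, Cauchy--Schwarz, and the uniform bounds on $(\alpha,\beta)$ from Remark \ref{Remark_2}. Two points need tightening. In (i), the assertion that the zero-mean martingale part ``vanishes in expectation'' inside $\max\{\cdot,0\}$ is false in general (one only has $\mathbb{E}[(X+M)^+]\ge(\mathbb{E}[X])^+$, not equality with $\mathbb{E}[X^+]$), and one cannot ``choose the martingale term to cancel $b$''; the clean fix, which is what the paper does, is to take $\alpha=\beta=0$, so that $m(x_T^{t,a;u})-y_{T;t,a,b}^{u,0,0}=m(x_T^{t,a;u})-b+\int_t^T l\,\dd s$ and the term $-b\le0$ is simply dropped inside the positive part using $l,m\ge0$; this gives the bound $C(1+|a|)$ uniformly in $b\ge0$ without any discussion of martingales. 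In (iii), your closing claim that the ``potentially large martingale parts cancel exactly'' does not apply: $y_{T;t,a,b}^{u,\alpha,\beta}-y_{T;t',a,b}^{u,\alpha,\beta}$ retains the increments $\int_t^{t'}\alpha_s^\top\dd B_s+\int_t^{t'}\int_E\beta_s(e)\tilde N(\dd e,\dd s)$, and it is precisely here that the restriction to uniformly bounded $(\alpha,\beta)$ from Remark \ref{Remark_2}, combined with the second estimate of Lemma \ref{Lemma_2}(ii), is genuinely needed --- exactly as the paper invokes it. With these two repairs your argument coincides with the paper's proof.
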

\begin{proof}
	In view of the definition of $W$,  when $b \in [0,\infty)$, with $\alpha=0$ and $\beta=0$, 
	\begin{align*}
	W(t,a,b) & \leq \mathop{\inf_{u \in \mathcal{U}}}
	\mathbb{E} \Bigl [ \max \{ m(x_T^{t,a;u}) - y_{T;t,a,b}^{u,\alpha,\beta}, 0 \}  + \int_t^T d(x_s^{t,a;u},\Gamma) \dd s \Bigr ] \\
& \leq \mathbb{E} \Bigl [ \int_t^T l(s,x_{s}^{t,a;u},u_s) \dd s + m(x_{T}^{t,a;u})  + \int_t^T d(x_{s}^{t,a;u}, \Gamma) \dd s \Bigr ],
	\end{align*}
where the second inequality follows from the fact that $l$ and $m$ are nonnegative due to (ii) of Assumption \ref{Assumption_2}. Then the linear growth of $W$ in $a$ in the statement of (i) follows from Assumptions \ref{Assumption_1}, \ref{Assumption_2} and \ref{Assumption_3}, and (ii) of Lemma \ref{Lemma_1}. 

Note that $|\inf f(x) - \inf g(x)| \leq \sup |f(x) - g(x)|$ and $|\sup f(x) - \sup g(x)| \leq \sup |f(x) - g(x)|$. From Assumptions \ref{Assumption_1}, \ref{Assumption_2} and \ref{Assumption_3}, and using H\"older inequality,
\begin{align*}
	& |W(t,a,b) - W(t,a^\prime, b^\prime)| \\
	& \leq C \mathop{\sup_{u \in \mathcal{U}}}_{\alpha \in \mathcal{A},\beta \in \mathcal{B}} \Bigl \{ \mathbb{E} \Bigl [ |x_{T}^{t,a;u}  - x_{T}^{t,a^\prime;u} |^2 \Bigr ]^{\frac{1}{2}} + \mathbb{E} \Bigl [ |y_{T;t,a,b}^{u,\alpha,\beta} - y_{T;t,a^\prime,b^\prime}^{u,\alpha,\beta}|^2 \Bigr ]^{\frac{1}{2}}  \\
	& \qquad \qquad + \mathbb{E} \Bigl [ \int_t^T |x_{s}^{t,a;u} - x_{s}^{t,a^\prime;u} |^2 \dd s \Bigr ]^{\frac{1}{2}} \Bigr \}  \leq C(|a - a^\prime | + |b - b^\prime|).
\end{align*}
Notice that to obtain the last inequality, we have used (ii) of Lemmas \ref{Lemma_1} and \ref{Lemma_2}, the compactness of $U$, and the fact that $(\alpha,\beta)$ can be restricted to uniformly bounded controls in $\mathcal{G}_{\mathbb{F}}^2$ and $\mathcal{L}_{\mathbb{F}}^2$ senses from Remark \ref{Remark_2}. This shows (ii).

For the continuity of $W$ in $t \in [0,T]$ in (iii), let $t,t+\tau \in [0,T]$ with $\tau > 0$. Then by applying the similar technique above and using (ii) of Lemma \ref{Lemma_1}, we have
\begin{align*}
& |W(t+\tau,a,b) - W(t,a,b)| \\
& \leq C \mathop{\sup_{u \in \mathcal{U}}}_{\alpha \in \mathcal{A},\beta \in \mathcal{B}}  \Bigl \{ \mathbb{E} \Bigl [ |x_{T}^{t+\tau,a;u}  - x_{T}^{t,a;u} |^2 \Bigr ]^{\frac{1}{2}} + \mathbb{E} \Bigl [ |y_{T;t+ \tau,a,b}^{u,\alpha,\beta} - y_{T;t,a,b}^{u,\alpha,\beta}|^2 \Bigr ]^{\frac{1}{2}}\\
	& \qquad \qquad  + \mathbb{E} \Bigl [ \int_{t+\tau}^T |x_{s}^{t+\tau,a;u} - x_{s}^{t,a;u} |^2 \dd s \Bigr ]^{\frac{1}{2}} + \mathbb{E} \Bigl [ \int_{t}^{t+\tau} (1 + |x_{s}^{t,a;u}|^2) \dd s \Bigr ]^{\frac{1}{2}} \Bigr \} \\
	& \leq C \Bigl ( \tau^{\frac{1}{2}} + \mathop{\sup_{u \in \mathcal{U}}}_{\alpha \in \mathcal{A},\beta \in \mathcal{B}} \mathbb{E} \Bigl [ |y_{T;t+ \tau,a,b}^{u,\alpha,\beta} - y_{T;t,a,b}^{u,\alpha,\beta}|^2 \Bigr ]^{\frac{1}{2}} \Bigr ).
\end{align*}
From Remark \ref{Remark_2}, we may consider the uniformly bounded controls of $(\alpha,\beta)$ in $\mathcal{G}_{\mathbb{F}}^2$ and $\mathcal{L}_{\mathbb{F}}^2$ senses. Therefore, we apply (ii) of Lemma \ref{Lemma_2} to get 
\begin{align*}	
\limsup_{\tau \downarrow 0} |W(t+\tau,a,b) - W(t,a,b)|  & \leq \limsup_{\tau \downarrow 0} C (1 + |a| + |b|) \tau^{\frac{1}{2}} = 0.
\end{align*}
We complete the proof. 
\end{proof}	

\begin{lemma}\label{Lemma_5}
Suppose that Assumptions \ref{Assumption_1}, \ref{Assumption_2} and \ref{Assumption_3} hold. If $b \leq 0$, then we have
\begin{align*}
	W(t,a,b) = W_0(t,a) - b,
\end{align*}
where $W_0:[0,T] \times \mathbb{R}^n \rightarrow \mathbb{R}$ is the value function of the following problem:
\begin{align*}
W_0(t,a) := \inf_{u \in \mathcal{U}} \Bigl \{ J(t,a;u) + \mathbb{E}  \int_t^T d(x_s^{t,a;u},\Gamma) \dd s  \Bigr \}.	
\end{align*}
\end{lemma}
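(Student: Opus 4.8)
The plan is to evaluate the inner infimum over the unbounded controls $(\alpha,\beta)\in\mathcal{A}\times\mathcal{B}$ explicitly by exploiting the sign conditions of Assumption \ref{Assumption_2}(ii) together with $b\le 0$, and then take the remaining infimum over $u\in\mathcal{U}$. First I would solve (\ref{eq_5}) in closed form,
\begin{align*}
y_{T;t,a,b}^{u,\alpha,\beta} = b - \int_t^T l(s,x_s^{t,a;u},u_s)\,\dd s + M_T^{\alpha,\beta}, \qquad M_T^{\alpha,\beta} := \int_t^T \alpha_s^\top \,\dd B_s + \int_t^T\int_E \beta_s(e)\,\tilde N(\dd e,\dd s),
\end{align*}
and substitute this into $\bar J$. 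Writing $\xi^{t,a;u} := m(x_T^{t,a;u}) + \int_t^T l(s,x_s^{t,a;u},u_s)\,\dd s - b$, which is $\mathcal{F}_T$-measurable and lies in $L^2(\Omega,\mathcal{F}_T;\mathbb{R})$ by Lemma \ref{Lemma_1}(ii) and Assumption \ref{Assumption_2}(i), this gives
\begin{align*}
\bar J(t,a,b;u,\alpha,\beta) = \mathbb{E}\bigl[\max\{\xi^{t,a;u} - M_T^{\alpha,\beta},\,0\}\bigr] + \mathbb{E}\int_t^T d(x_s^{t,a;u},\Gamma)\,\dd s,
\end{align*}
where the crucial structural point is that the second term does not depend on $(\alpha,\beta)$.

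The conceptual content is the observation that $l,m\ge 0$ and $b\le 0$ force $\xi^{t,a;u}\ge -b\ge 0$, $\mathbb{P}$-a.s. From this I would derive matching bounds on the inner infimum. For the lower bound, for any admissible $(\alpha,\beta)$ the processes $\int_t^\cdot \alpha_s^\top \dd B_s$ and $\int_t^\cdot\int_E \beta_s(e)\tilde N(\dd e,\dd s)$ are mean-zero martingales (since $\alpha\in\mathcal{L}^2_{\mathbb{F}}$, $\beta\in\mathcal{G}^2_{\mathbb{F}}$), so $\mathbb{E}[M_T^{\alpha,\beta}]=0$ and hence $\mathbb{E}[\max\{\xi^{t,a;u}-M_T^{\alpha,\beta},0\}] \ge \mathbb{E}[\xi^{t,a;u}-M_T^{\alpha,\beta}] = \mathbb{E}[\xi^{t,a;u}]$. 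For the upper bound, take $\alpha\equiv 0$ and $\beta\equiv 0$ (both admissible): then $M_T^{\alpha,\beta}=0$ and, because $\xi^{t,a;u}\ge 0$, $\mathbb{E}[\max\{\xi^{t,a;u},0\}] = \mathbb{E}[\xi^{t,a;u}]$. Therefore
\begin{align*}
\mathop{\inf_{\alpha\in\mathcal{A}}}_{\beta\in\mathcal{B}} \bar J(t,a,b;u,\alpha,\beta) = \mathbb{E}[\xi^{t,a;u}] + \mathbb{E}\int_t^T d(x_s^{t,a;u},\Gamma)\,\dd s = J(t,a;u) - b + \mathbb{E}\int_t^T d(x_s^{t,a;u},\Gamma)\,\dd s,
\end{align*}
using $\mathbb{E}[\xi^{t,a;u}] = J(t,a;u) - b$.

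It then remains to take the infimum over $u\in\mathcal{U}$ and pull the constant $-b$ out of the infimum, which yields exactly $W(t,a,b) = \inf_{u}\{J(t,a;u) + \mathbb{E}\int_t^T d(x_s^{t,a;u},\Gamma)\,\dd s\} - b = W_0(t,a) - b$. There is no substantial obstacle in this argument; the only technical point requiring attention is the integrability of $\xi^{t,a;u}$ (so that $M_T^{\alpha,\beta}$ genuinely has zero mean and the expectation of the difference splits), which is immediate from Lemma \ref{Lemma_1}(ii) and the linear growth in Assumption \ref{Assumption_2}(i). Everything of substance is contained in the sign observation $\xi^{t,a;u}\ge 0$, which makes the optimal choice of the martingale controls simply the trivial one.
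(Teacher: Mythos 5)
Your proof is correct and follows essentially the same route as the paper: both arguments exploit that the stochastic integrals are mean-zero, that $l,m\geq 0$ and $b\leq 0$ make the quantity inside the $\max$ effectively nonnegative, and that the distance term is independent of $(\alpha,\beta)$, so the inner infimum is attained at $\alpha=\beta=0$ and the constant $-b$ factors out. If anything, your two-sided bound (lower bound via $\max\{z,0\}\geq z$ for arbitrary $(\alpha,\beta)$, upper bound via the pointwise identity at $\alpha=\beta=0$ where $\xi^{t,a;u}\geq 0$ $\mathbb{P}$-a.s.) is slightly more careful than the paper's direct assertion that $\mathbb{E}[\max\{m(x_T^{t,a;u})-y_{T;t,a,b}^{u,\alpha,\beta},0\}]=\mathbb{E}[m(x_T^{t,a;u})-y_{T;t,a,b}^{u,\alpha,\beta}]$, which as written is justified only by nonnegativity of the expectation rather than of the integrand.
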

\begin{proof}
Note that since $b \leq 0$, it follows from the nonnegativity of $l$ and $m$ that
\begin{align*}	
& \mathbb{E} \Bigl [ m(x_T^{t,a;u}) - y_{T;t,a,b}^{u,\alpha,\beta} \Bigr ] \\
& = \mathbb{E} \Bigl [ m(x_T^{t,a;u}) - b + \int_t^T l(s,x_{s}^{t,a;u},u_s) \dd s - \int_t^T \alpha_s^\top \dd B_s - \int_t^T \int_{E} \beta_s(e) \tilde{N}(\dd e, \dd s) \Bigr ] \\
& = \mathbb{E} \Bigl [ m(x_T^{t,a;u}) - b + \int_t^T l(s,x_{s}^{t,a;u},u_s) \dd s  \Bigr ] \geq 0,
\end{align*}
which is due to the fact that $\mathbb{E}[\int_t^T \alpha_s^\top \dd B_s] = 0$ and $\mathbb{E}[\int_t^T \int_{E} \beta_s(e)  \tilde{N}(\dd e, \dd s)] = 0$. 

Hence,
\begin{align*}
\mathbb{E} \Bigl [ \max \{ m(x_T^{t,a;u}) - y_{T;t,a,b}^{u,\alpha,\beta}, 0 \} \Bigr ] = \mathbb{E} \Bigl [ m(x_T^{t,a;u}) - y_{T;t,a,b}^{u,\alpha,\beta} \Bigr ],
\end{align*}
and from the definition of $W$ and $J$,\begin{align*}
 W(t,a,b) & = \mathop{\inf_{u \in \mathcal{U}}}_{\alpha \in \mathcal{A},\beta \in \mathcal{B}} \mathbb{E} \Bigl [	m(x_T^{t,a;u}) - b + \int_t^T l(s,x_{s}^{t,a;u},u_s) \dd s  + \int_t^T d(x_s^{t,a;u},\Gamma) \dd s \Bigr ] \\
& =W_0(t,a) - b.
\end{align*}
This completes the proof.
\end{proof}

Based on (\ref{eq_10}) and Lemma \ref{Lemma_5}, $W$ satisfies the following boundary conditions:

\begin{lemma}\label{Lemma_6}
Suppose that Assumptions \ref{Assumption_1}, \ref{Assumption_2} and \ref{Assumption_3} hold. Then $W$ satisfies the following boundary conditions:
\begin{align*}
\begin{cases}
	W(T,\widehat{a}) = \max \{ m(a) - b \},~  (a,b) \in \mathbb{R}^n \times [0,\infty) \\
	W(t,a,0) = W_0(t,a),~  (t,a) \in [0,T) \times \mathbb{R}^n.	
\end{cases}
\end{align*}
\end{lemma}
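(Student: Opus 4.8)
The plan is to read both identities off directly from the definition (\ref{eq_10}) of $W$ together with Lemma \ref{Lemma_5}, so the argument is short.

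For the terminal condition I would evaluate $W$ at $t=T$. By the initial conditions in the SDEs (\ref{eq_1}) and (\ref{eq_5}), for \emph{every} admissible triple $(u,\alpha,\beta)\in\mathcal{U}\times\mathcal{A}\times\mathcal{B}$ one has $x_T^{T,a;u}=a$ and $y_{T;T,a,b}^{u,\alpha,\beta}=b$, and moreover $\int_T^T d(x_s^{T,a;u},\Gamma)\,\dd s=0$. Hence $\bar J(T,a,b;u,\alpha,\beta)=\max\{m(a)-b,0\}$, which is independent of the controls; taking the infimum over $(u,\alpha,\beta)$ yields $W(T,a,b)=\max\{m(a)-b,0\}$ for all $(a,b)\in\mathbb{R}^n\times[0,\infty)$ (this is the intended reading of the right-hand side in the statement).

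For the lateral boundary condition I would simply specialize Lemma \ref{Lemma_5} to $b=0$: since $0\le 0$, the hypothesis of Lemma \ref{Lemma_5} applies, and it gives $W(t,a,0)=W_0(t,a)-0=W_0(t,a)$ for $(t,a)\in[0,T)\times\mathbb{R}^n$ (in fact for all $t\in[0,T]$).

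There is essentially no obstacle here; the only points deserving a line of care are that the running-cost integral $\int_T^T d(\cdot,\Gamma)\,\dd s$ vanishes at $t=T$, so that the time-$T$ objective is deterministic and the infimum is trivially attained, and that the boundary case $b=0$ is indeed covered by the hypothesis $b\le 0$ of Lemma \ref{Lemma_5}. The continuity of $W$ established in Lemma \ref{Lemma_4} then ensures these boundary values are consistent with the interior behaviour of $W$.
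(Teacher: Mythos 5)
Your proof is correct and follows exactly the route the paper intends: the paper gives no separate proof of Lemma \ref{Lemma_6}, merely noting that it follows from the definition (\ref{eq_10}) and Lemma \ref{Lemma_5}, which is precisely your argument (evaluation of $\bar J$ at $t=T$ for the terminal condition, and Lemma \ref{Lemma_5} with $b=0$ for the lateral condition). Your reading of $\max\{m(a)-b\}$ as $\max\{m(a)-b,0\}$ is the correct interpretation of the paper's notation.
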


\section{The Hamilton-Jacobi-Bellman Equation: Existence of Viscosity Solution} \label{Section_4}

In this section and Sections \ref{Section_5} and \ref{Section_6}, we show that $W$ is a unique continuous viscosity solution of the associated HJB equation.

As seen from (\ref{eq_10}), the auxiliary value function depends on the augmented dynamical system on $\mathbb{R}^{n+1}$. We introduce the following notation:
\begin{align*}
\widehat{f}(t,a,u) & := \begin{bmatrix}
 	f(t,a,u) \\
 	-l(t,a,u)
 \end{bmatrix},~ \widehat{\sigma}(t,a,u,\alpha) := \begin{bmatrix}
 	\sigma(t,a,u) \\
 	\alpha^\top
 \end{bmatrix} \\
 \widehat{\chi}(t,a,u,e,\beta) & := \begin{bmatrix}
 	\chi(t,a,u,e)\\
 	\beta(e)
 \end{bmatrix},~ \widehat{a} = \begin{bmatrix}
 	a \\
 	b
 \end{bmatrix},	
\end{align*}
where $\widehat{\sigma}:[0,T] \times \mathbb{R}^n \times U \times \mathbb{R}^p \rightarrow \mathbb{R}^{ (n+1) \times p}$ and $\widehat{\chi}: [0,T] \times \mathbb{R}^n \times U \times  E \times G^2(E,\mathcal{E},\pi;\mathbb{R}) \rightarrow \mathbb{R}^{n+1}$. Let $\mathcal{O} := [0,T) \times \mathbb{R}^n \times (0,\infty)$, $\bar{\mathcal{O}} := [0,T] \times \mathbb{R}^n \times [0,\infty)$, and $G^2 := G^2(E,\mathcal{B}(E),\pi;\mathbb{R})$.

The HJB equation with the boundary conditions (see Lemma \ref{Lemma_6}) is introduced below, which is the second-order nonlinear integro-partial differential equation (IPDE):
\begin{align}
\label{eq_4_1}
\begin{cases}
- \partial_t W(t,\widehat{a})	+ H(t,\widehat{a},(W,DW,D^2W)(t,\widehat{a})) = 0, ~ (t,\widehat{a}) \in \mathcal{O}\\
	W(T,\widehat{a}) = \max \{ m(a) - b \},~  (a,b) \in \mathbb{R}^n \times [0,\infty) \\
	W(t,a,0) = W_0(t,a),~  (t,a) \in [0,T) \times \mathbb{R}^n,
\end{cases}
\end{align}
where the Hamiltonian $H:\bar{\mathcal{O}} \times \mathbb{R} \times \mathbb{R}^{n+1} \times \mathbb{S}^{n+1} \rightarrow \mathbb{R}$ is defined by
\begin{align*}
& H(t,\widehat{a},W,DW,D^2W) \\
& := \mathop{\sup_{u \in U}}_{\alpha \in \mathbb{R}^{r}, \beta \in G^2} \Bigl \{ - \langle D W (t,\widehat{a}), \widehat{f}(t,a,u) \rangle  - \frac{1}{2} \Tr (\widehat{\sigma} \widehat{\sigma}^\top(t,a,u,\alpha) D^2 W (t,\widehat{a}) ) \\
&\qquad \qquad - \int_{E} \bigl [ W(t,\widehat{a} +  \widehat{\chi} (t,a,u,e,\beta)) - W(t,\widehat{a}) \\
&\qquad \qquad \qquad \qquad - \langle DW(t,\widehat{a}),  \widehat{\chi}(t,a,u,e,\beta) \rangle \bigr ]\pi(\dd e) \Bigr \} - d(a,\Gamma).
\end{align*}

The notion of viscosity solutions for (\ref{eq_4_1}) is given as follows \cite{Barles_SSR_1997, Barles_Poincare_2008, Buckdahn_SPTA_2011, Peng_Acta_2008, Pham_JMSEC_1998, Soner_Jump_SCTA_1998}:

\begin{definition}\label{Definition_1}
	A real-valued function $W \in C(\bar{\mathcal{O}} )$ is said to be a viscosity subsolution (resp. supersolution) of (\ref{eq_4_1}) if   
	\begin{enumerate}[(i)]
		\item $W(T,\widehat{a}) \leq \max\{m(a) - b\}$  (resp. $W(T,\widehat{a}) \geq \max\{m(a) - b\}$) for $(a,b) \in \mathbb{R}^n \times [0,\infty)$ and $W(t,a,0) \leq W_0(t,a)$ (resp. $W(t,a,0) \geq W_0(t,a) $) for $(t,a) \in [0,T) \times \mathbb{R}^n$;
		\item For all test functions $\phi \in C_b^{1,3}(\bar{\mathcal{O}}) \cap C_2(\bar{\mathcal{O}})$, the following inequality holds at the global maximum (resp. minimum) point $(t,\widehat{a})  \in \mathcal{O}$ of $W-\phi$:
	\begin{align*}
	& - \partial_t \phi(t,\widehat{a})  + H(t,\widehat{a},(\phi, D \phi, D^2 \phi)(t,\widehat{a})) \leq  0 \\
	 (\text{resp.}~  & - \partial_t \phi(t,\widehat{a})  + H(t,\widehat{a},(\phi, D \phi, D^2 \phi)(t,\widehat{a})) \geq  0 ).
	\end{align*}
	\end{enumerate}
A real-valued function $W \in C(\bar{\mathcal{O}})$ is said to be a viscosity solution of (\ref{eq_4_1}) if it is both a viscosity subsolution and a viscosity supersolution of (\ref{eq_4_1}).
\end{definition}

The existence of the viscosity solution for (\ref{eq_4_1}) can be stated as follows:

\begin{theorem}\label{Theorem_2}	
Suppose that Assumptions \ref{Assumption_1}, \ref{Assumption_2} and \ref{Assumption_3} hold. Then $W$ defined in (\ref{eq_10}) is the continuous viscosity solution of the HJB equation in (\ref{eq_4_1}).
\end{theorem}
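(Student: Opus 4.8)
The plan is to prove that $W$ is a viscosity solution of \eqref{eq_4_1} by verifying the boundary conditions directly and then establishing the sub- and supersolution properties in the interior $\mathcal{O}$ via the dynamic programming principle (Proposition \ref{Proposition_1}) combined with It\^o's formula for L\'evy-type stochastic integrals applied to test functions. The boundary conditions are essentially free: at $t=T$ the definition of $W$ in \eqref{eq_10} gives $W(T,\widehat{a}) = \max\{m(a)-b,0\}$ directly from the terminal data of \eqref{eq_1} and \eqref{eq_5} (noting the integral term vanishes), and the condition $W(t,a,0) = W_0(t,a)$ on $b=0$ follows from Lemma \ref{Lemma_6} (which itself rests on Lemma \ref{Lemma_5}). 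Continuity of $W$ on $\bar{\mathcal{O}}$ is already supplied by Lemma \ref{Lemma_4}, so $W \in C(\bar{\mathcal{O}})$ as required by Definition \ref{Definition_1}.

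For the interior, fix $(t,\widehat{a}) \in \mathcal{O}$ and a test function $\phi \in C_b^{1,3}(\bar{\mathcal{O}}) \cap C_2(\bar{\mathcal{O}})$ with $W - \phi$ attaining a global maximum (for the subsolution property) at $(t,\widehat{a})$; by subtracting a constant we may assume $W(t,\widehat{a}) = \phi(t,\widehat{a})$ and $W \le \phi$ everywhere. \textbf{Subsolution.} Suppose for contradiction the subsolution inequality fails, i.e. $-\partial_t \phi(t,\widehat{a}) + H(t,\widehat{a},(\phi,D\phi,D^2\phi)(t,\widehat{a})) > 0$. Since the terms defining $H$ are continuous in $(t,\widehat{a})$ and $\phi$ is $C^{1,3}$, there exist a neighborhood and a single admissible triple $(u,\alpha,\beta)$ (taken constant-in-state for simplicity near the point, using $\pi(E) < \infty$ to control the nonlocal term) so that the corresponding pre-generator $-\partial_t\phi - \langle D\phi,\widehat{f}\rangle - \frac12\Tr(\widehat\sigma\widehat\sigma^\top D^2\phi) - \int_E[\phi(\cdot+\widehat\chi)-\phi-\langle D\phi,\widehat\chi\rangle]\pi(\de e) - d(a,\Gamma)$ stays strictly positive, say $\ge \eta > 0$, on a small ball up to an exit/stopping time $\tau$. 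Apply It\^o's formula to $\phi(s,x_s,y_s)$ along the constant control, take expectations — the martingale parts vanish — and use the DPP from Proposition \ref{Proposition_1} at time $\tau$ together with $W \le \phi$ to derive $\phi(t,\widehat{a}) \le \mathbb{E}[\int_t^\tau d(x_s,\Gamma)\de s + \phi(\tau, \cdot)] - \eta\,\mathbb{E}[\tau - t] < \phi(t,\widehat{a})$, a contradiction. \textbf{Supersolution.} Symmetrically, if $W - \phi$ has a global minimum at $(t,\widehat{a})$, assume the supersolution inequality fails, so the sup over $(u,\alpha,\beta)$ of the pre-generator is $\le -\eta < 0$ near the point; this bound holds \emph{uniformly} in the controls, hence for every admissible choice It\^o's formula plus the DPP gives $\phi(t,\widehat{a}) \ge \mathbb{E}[\int_t^\tau d\,\de s + W(\tau,\cdot)] + \eta\,\mathbb{E}[\tau-t]$ uniformly, and taking the infimum over controls contradicts the DPP identity $W(t,\widehat{a}) = \inf \mathbb{E}[\int_t^\tau d\,\de s + W(\tau,\cdot)]$.

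The main obstacle is the nonlocal integral term together with the unbounded control set for $\beta \in G^2$. Three points need care. First, in the It\^o expansion the jump integrand is $\phi(s,\widehat{x}_{s-} + \widehat\chi(s,x_{s-},u_s,e,\beta_s(e))) - \phi(s,\widehat{x}_{s-})$; splitting it into the compensated martingale part and the compensator $\int_E[\phi(\cdot+\widehat\chi)-\phi-\langle D\phi,\widehat\chi\rangle]\pi(\de e)$ requires the second integrand to be $\pi$-integrable, which follows from the $C_b^{1,3}$ bound on $D^2\phi$ via a second-order Taylor estimate $|\phi(\widehat{a}+\widehat\chi)-\phi(\widehat{a})-\langle D\phi,\widehat\chi\rangle| \le C|\widehat\chi|^2$ combined with $\int_E(1\wedge|e|^2)\pi(\de e)<\infty$ and the $G^2$-growth bounds on $\chi$ and $\beta$ in Assumption \ref{Assumption_1}; however since the hypothesis here is merely $\pi(E)<\infty$, one can instead bound the integrand by $C(1+|\widehat\chi|^2)$ and use finiteness of $\pi$ directly. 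Second, for the subsolution argument one cannot in general pick a single \emph{state-independent} optimizing $(u,\alpha,\beta)$; the standard fix is to freeze near-optimal controls and exploit local continuity, keeping $\tau$ small enough that the nonlocal term's dependence on the whole trajectory is controlled (here the boundedness of $\pi$ and the linear growth of $\phi \in C_2$ help). Third, the supersolution side needs the pre-generator bound to be genuinely uniform over the noncompact set $\mathbb{R}^r \times G^2$; this is where one uses that the failure of the supersolution inequality means $H(\cdots) \le -\eta$, and $H$ is itself a supremum — so the bound transfers to every admissible control automatically, which is precisely what makes the infimum-over-controls step in the DPP go through. These are the places where the proof genuinely departs from the no-jump case of \cite{Bokanowski_SICON_2016}.
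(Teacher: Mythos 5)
Your proposal is correct and follows essentially the same route as the paper: boundary conditions from Lemmas \ref{Lemma_4}--\ref{Lemma_6}, then the dynamic programming principle of Proposition \ref{Proposition_1} combined with It\^o's formula for L\'evy-type stochastic integrals, with the nonlocal compensator controlled by $\pi(E)<\infty$ and the boundedness of $D^2\phi$. The only (harmless) difference is cosmetic: the paper proves the subsolution inequality directly by fixing an arbitrary constant control, dividing by $\tau$ and letting $\tau\downarrow 0$, then taking the supremum at the end (so no witness control or exit time is needed there), and reserves the contradiction argument with $\epsilon$-optimal controls for the supersolution side, exactly as you do.
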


\begin{remark}\label{Remark_7}
In the proof of Theorem \ref{Theorem_2}, the additional growth condition of the test function is not required.
\end{remark}

\begin{proof}[Proof of Theorem \ref{Theorem_2}]
	Let us first prove the subsolution property. In view of Lemma \ref{Lemma_4}, $W \in C([0,T] \times \mathbb{R}^{n+1})$. Also, from Lemma \ref{Lemma_6}, $W$ satisfies (i) of Definition \ref{Definition_1}.
	
	We prove (ii) of Definition \ref{Definition_1}. Let $\phi \in C_b^{1,3}(\bar{\mathcal{O}})$ be the test function such that
	\begin{align*}
	(W-\phi)(t,a,b) = \max_{(\bar{t},\bar{a},\bar{b}) \in \mathcal{O}}	(W-\phi)(\bar{t},\bar{a},\bar{b}),
	\end{align*}
	and without loss of generality, we may assume that $W(t,a,b) = \phi(t,a,b)$. This implies $W(\bar{t},\bar{a},\bar{b}) \leq \phi  (\bar{t},\bar{a},\bar{b})$ for $(\bar{t},\bar{a},\bar{b}) \in \mathcal{O}$ and $(\bar{t},\bar{a},\bar{b}) \neq (t,a,b)$.
	
	By using the DPP in Proposition \ref{Proposition_1} with $t,t+\tau \in [0,T]$ and $\tau > 0$,
	\begin{align*}
	& \phi(t,a,b)  = W(t,a,b)  = \mathop{\inf_{u \in \mathcal{U}}}_{\alpha \in \mathcal{A},\beta \in \mathcal{B}} \mathbb{E} \Bigl [\int_{t}^{t+\tau} d(x_s^{t,a;u},\Gamma) \dd s  +  W(t+\tau,x_{t+\tau}^{t,a;u}, y_{t+\tau;t,a,b}^{u,\alpha,\beta}) \Bigr ],	
	\end{align*}
	which implies
\begin{align*}
\phi(t,a,b) - \mathbb{E} \Bigl [\int_{t}^{t+\tau} d(x_s^{t,a;u},\Gamma) \dd s  +  \phi(t+\tau,x_{t+\tau}^{t,a;u}, y_{t+\tau;t,a,b}^{u,\alpha,\beta}) \Bigr ]   \leq 0.
\end{align*}

By applying It\^o's formula of L\'evy-type stochastic integrals \cite[Theorem 4.4.7]{Applebaum_book}, 
\begin{align*}
& - \mathbb{E} \Bigl [ \int_{t}^{t+\tau} d(x_s^{t,a;u},\Gamma) \dd s + \int_t^{t+\tau} \partial_t \phi (s,x_{s}^{t,a;u}, y_{s;t,a,b}^{u,\alpha,\beta})  \dd s \Bigr ]\\
& - \mathbb{E} \Bigl [ \int_{t}^{t+\tau} \langle D \phi(s,x_{s}^{t,a;u}, y_{s;t,a,b}^{u,\alpha,\beta}), \widehat{f}(s,x_{s}^{t,a;u},u_s)	 \rangle \dd s \Bigr ]\\
& - \frac{1}{2} \mathbb{E} \Bigl [ \int_{t}^{t+\tau} \Tr (\widehat{\sigma}\widehat{\sigma}(s,x_{s}^{t,a;u},u_s,\alpha_s) D^2 \phi (s,x_{s}^{t,a;u}, y_{s;t,a,b}^{u,\alpha,\beta})) \dd s \Bigr ]\\
& - \mathbb{E} \Bigl [ \int_{t}^{t+\tau} \int_{E} \bigl [ \phi(s,x_{s}^{t,a;u} + \chi(s,x_{s}^{t,a;u},u_s,e), y_{s;t,a,b}^{u,\alpha,\beta}  + \beta_s(e)) - \phi(s,x_{s}^{t,a;u}, y_{s;t,a,b}^{u,\alpha,\beta}) \\
&\qquad \qquad \qquad - \langle D \phi(s,x_{s}^{t,a;u}, y_{s;t,a,b}^{u,\alpha,\beta}), \widehat{\chi}(s,x_{s}^{t,a;u},u_s,e,\beta_s(e)) \rangle \bigr ] \pi (\dd e) \dd s \Bigr ]
\leq 0,
\end{align*}
where we have used the fact that the expectation for the stochastic integrals of $B$ and $\tilde{N}$ are zero, since they are $\mathcal{F}_t$-martingales.

Multiplying $\frac{1}{\tau}$ above and then letting $\tau \downarrow 0$, we have
\begin{align*}
& - \partial_t \phi (t,\widehat{a}) + H^\prime (t,\widehat{a},(\phi,D\phi,D^2\phi)(t,\widehat{a}); u,\alpha,\beta) \leq 0,
\end{align*}
where
\begin{align}
\label{eq_4_2}
	& H^\prime (t,\widehat{a},(\phi,D\phi,D^2\phi)(t,\widehat{a}); u,\alpha,\beta) \\
	& :=  - d(a,\Gamma) - \langle D \phi(t,\widehat{a}), \widehat{f}(t,a,u) \rangle  - \frac{1}{2} \Tr (\widehat{\sigma} \widehat{\sigma}^\top(t,a,u,\alpha) D^2 \phi (t,\widehat{a}))  \nonumber \\
&~~~ - \int_{E} \bigl [ \phi(t, a + \chi(t,a,u,e),b + \beta(e))) - \phi(t,\widehat{a})  - \langle D \phi(t,\widehat{a}), \widehat{\chi}(t,a,u,e,\beta) \rangle \bigr ]\pi(\dd e). \nonumber
\end{align}
By taking $\sup$ with respect to $(u,\alpha,\beta) \in U \times \mathbb{R}^p \times G^2$, in view of definition $H$, 
\begin{align}
\label{eq_4_3}
- \partial_t \phi (t,\widehat{a})	+ H(t,\widehat{a},(\phi,D \phi,D^2 \phi)(t,\widehat{a})) \leq 0,
\end{align}
which shows that $W$ is the viscosity subsolution of (\ref{eq_4_1}). 

We now prove, by contradiction, the  supersolution property. It is easy to see that $W$ satisfies the boundary inequalities in (i) of Definition \ref{Definition_1}. 

Suppose that $\phi \in C_b^{1,3}(\bar{\mathcal{O}})$ is the test function satisfying the following property:
	\begin{align*}
	(W-\phi)(t,a,b) = \min_{(\bar{t},\bar{a}, \bar{b}) \in \mathcal{O} }	(W-\phi)(\bar{t},\bar{a},\bar{b}),
	\end{align*}
	and without loss of generality, we may assume $W(t,a,b) = \phi(t,a,b)$. This implies that $W(\bar{t},\bar{a},\bar{b}) \geq \phi  (\bar{t},\bar{a},\bar{b})$ for $(\bar{t},\bar{a},\bar{b}) \in \mathcal{O}$ and $(\bar{t},\bar{a},\bar{b}) \neq (t,a,b)$.
	
	Let us assume that $W$ is not a viscosity supersolution. Then there exists a constant $\theta>0$ such that
	\begin{align*}
			- \partial_t \phi(t,\widehat{a})  + H(t,\widehat{a},(\phi, D \phi, D^2 \phi)(t,\widehat{a})) \leq - \theta < 0.
	\end{align*}
	Recall the definition of $H^\prime$ in (\ref{eq_4_2}) and note that $H^\prime \leq  \sup_{u \in U, \alpha \in \mathbb{R}^{r}, \beta \in G^2} H^\prime = H$. Then for any $(u,\alpha,\beta) \in U \times \mathbb{R}^p \times G^2$, we have
	\begin{align}
	\label{eq_4_4}
	- \partial_t \phi(t,\widehat{a}) + H^\prime (t,\widehat{a},(\phi,D\phi,D^2\phi)(t,\widehat{a}); u,\alpha,\beta) \leq - \theta < 0.
	\end{align}
	
	On the other hand, the DPP in Proposition \ref{Proposition_1} implies
	\begin{align*}
	\phi(t,a,b)  & = W(t,a,b) \\
	& \geq \mathop{\inf_{u \in \mathcal{U}}}_{\alpha \in \mathcal{A},\beta \in \mathcal{B}} \mathbb{E} \Bigl [\int_{t}^{t+\tau} d(x_s^{t,a;u},\Gamma) \dd s  +  \phi(t+\tau,x_{t+\tau}^{t,a;u}, y_{t+\tau;t,a,b}^{u,\alpha,\beta}) \Bigr ],	
	\end{align*}
	and for each $\epsilon > 0
	$, there exist $(u^\epsilon,\alpha^\epsilon,\beta^\epsilon) \in \mathcal{U} \times \mathcal{A} \times \mathcal{B}$ such that
	\begin{align}
	\label{eq_4_5}	
	- \epsilon \tau \leq 	\phi(t,a,b) - \mathbb{E} \Bigl [\int_{t}^{t+\tau} d(x_s^{t,a;u^\epsilon},\Gamma) \dd s  +  \phi(t+\tau,x_{t+\tau}^{t,a;u^\epsilon}, y_{t+\tau;t,a,b}^{u^\epsilon,\alpha^\epsilon,\beta^\epsilon}) \Bigr ]	.
	\end{align}
	
	As in the viscosity subsolution case, we apply It\^o's formula to (\ref{eq_4_5}) and then multiply $\frac{1}{\tau}$. Since (\ref{eq_4_4}) holds for any $(u,\alpha,\beta) \in U \times \mathbb{R}^p \times G^2$, by letting $\tau \downarrow 0$ and noting the arbitrariness of $\epsilon$, we have
	\begin{align*}
		0 & \leq - \partial_t \phi(t,\widehat{a}) + H^\prime (t,\widehat{a},(\phi,D\phi,D^2\phi)(t,\widehat{a}); u,\alpha,\beta) \leq - \theta.
	\end{align*}
	This leads to the desired contradiction, since $\theta > 0$. Hence, $W$ is the viscosity supersolution. This, together with (\ref{eq_4_3}), shows that $W$ is the continuous viscosity solution of (\ref{eq_4_1}). This completes the proof.	
\end{proof}

\section{Uniqueness of Viscosity Solution} \label{Section_5}
We state the comparison principle of viscosity subsolution and supersolution.

\begin{theorem}\label{Theorem_3}
Suppose that Assumptions \ref{Assumption_1}, \ref{Assumption_2} and \ref{Assumption_3} hold. Let $\underline{W} \in C(\bar{\mathcal{O}})$ be the viscosity subsolution of the HJB equation in (\ref{eq_4_1}), and $\overline{W} \in C(\bar{\mathcal{O}})$ the viscosity supersolution of (\ref{eq_4_1}), where both  $\underline{W}$ and $\overline{W}$ satisfy the linear growth condition in $a \in \mathbb{R}^n$. Then
\begin{align}
\label{eq_5_1_11}
\underline{W}(t,\widehat{a}) \leq \overline{W}(t,\widehat{a}),~ \forall (t,\widehat{a}) \in \bar{\mathcal{O}}.
\end{align}	
\end{theorem}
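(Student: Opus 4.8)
The plan is to prove the comparison principle by the classical doubling-of-variables technique of Crandall--Ishii--Lions, adapted to the present setting of an unbounded-control, second-order nonlocal HJB equation. We argue by contradiction: suppose $M := \sup_{(t,\widehat a)\in\bar{\mathcal O}}(\underline W - \overline W)(t,\widehat a) > 0$. Because both functions have at most linear growth in $a$, we first regularize the problem so that the supremum is attained: introduce the penalization $\Phi_{\delta}(t,\widehat a) := \underline W(t,\widehat a) - \overline W(t,\widehat a) - \delta(1+|a|^2)^{1/2} - \delta/(T-t)$ (the last term to push maximizers away from $t=T$, and the $|a|$-term to compactify in $a$; the $b$-variable is controlled separately using Lemma~\ref{Lemma_5}, which gives the explicit form $W(t,a,b)=W_0(t,a)-b$ for $b\le 0$, so that differences stay bounded and the relevant maximization in $b$ stays in a bounded region). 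For $\delta$ small, $\sup\Phi_\delta \geq M/2 > 0$ and is attained at some interior point. One then checks, using the boundary conditions in Definition~\ref{Definition_1}(i), that maximizers cannot approach $t=T$ or $b=0$, so the maximizer lies in the open set $\mathcal O$.

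Next I would double the variables in $(t,a,b)$: for parameters $\varepsilon>0$ consider
\begin{align*}
\Psi_{\varepsilon,\delta}(t,\widehat a,s,\widehat a') := \underline W(t,\widehat a) - \overline W(s,\widehat a') - \frac{1}{2\varepsilon}\bigl(|t-s|^2 + |a-a'|^2 + |b-b'|^2\bigr) - \delta(1+|a|^2)^{1/2} - \frac{\delta}{T-t},
\end{align*}
and let $(t_\varepsilon,\widehat a_\varepsilon,s_\varepsilon,\widehat a_\varepsilon')$ be a maximizer. Standard arguments (using the linear growth and the continuity of $\underline W,\overline W$) give that the penalization term $\frac{1}{\varepsilon}(|t_\varepsilon - s_\varepsilon|^2+\cdots)\to 0$ and the maximizers converge, along a subsequence, to a maximizer of $\Phi_\delta$; in particular the limit point stays in $\mathcal O$, so for $\varepsilon$ small the doubled maximizers also lie in $\mathcal O$. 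Then I invoke the nonlocal version of the Crandall--Ishii maximum principle (the Jensen--Ishii lemma for integro-PDEs, as in \cite{Barles_Poincare_2008}, cf. Lemma~\ref{Lemma_8}): there exist $(p_1, X)\in \bar J^{2,+}\underline W(t_\varepsilon,\widehat a_\varepsilon)$ and $(p_2, Y)\in \bar J^{2,-}\overline W(s_\varepsilon,\widehat a_\varepsilon')$ with the usual block-matrix inequality $\begin{pmatrix} X & 0 \\ 0 & -Y\end{pmatrix} \leq \frac{3}{\varepsilon}\begin{pmatrix} I & -I \\ -I & I\end{pmatrix} + O(\delta)$, and the nonlocal integral is split, for a cutoff radius $\kappa>0$, into a singular part $|e|\le\kappa$ handled with the test-function (here the smooth penalization quadratic) and a nonsingular part $|e|>\kappa$ handled with the jets. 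Subtracting the sub- and supersolution inequalities at these jets yields
\begin{align*}
\theta \le -\partial_t\phi \text{-terms} + \bigl(H\text{-evaluated at subsolution jet}\bigr) - \bigl(H\text{-evaluated at supersolution jet}\bigr),
\end{align*}
and the goal is to show the right-hand side tends to $0$ as $\varepsilon,\delta\to 0$, contradicting $M>0$.

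The main obstacle — and the part requiring genuinely new work relative to \cite{Bokanowski_SICON_2016} — is controlling the Hamiltonian difference uniformly in the two unbounded control variables $\alpha\in\mathbb R^r$ and $\beta\in G^2$. For the $\alpha$-dependence I would rewrite the second-order local term $\sup_\alpha \bigl(-\tfrac12\Tr(\widehat\sigma\widehat\sigma^\top(u,\alpha)D^2W)\bigr)$ in the equivalent spectral-radius form (Lemma~\ref{Lemma_9}), using the matrix structure $\widehat\sigma\widehat\sigma^\top = \begin{bmatrix}\sigma\sigma^\top & \sigma\alpha\\ (\sigma\alpha)^\top & \alpha^\top\alpha\end{bmatrix}$: the supremum over $\alpha$ is finite precisely when a certain Schur-complement-type condition on $D^2W$ holds, and then the difference of spectral radii at $X$ and $Y$ is estimated by $\|X-Y\|$, which the matrix inequality controls by $O(1/\varepsilon)\cdot |a_\varepsilon-a_\varepsilon'|^2 + O(\delta) \to 0$. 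For the $\beta$-dependence in the nonlocal nonsingular part, one uses that evaluating $W$ at $\widehat a + \widehat\chi = (a+\chi(u,e), b+\beta(e))$ and subtracting the $b$-derivative term gives, after doubling, an expression of the form $\int_{|e|>\kappa}\bigl[\text{increments of }\underline W - \overline W\bigr]\pi(\mathrm de)$; here the unboundedness of $\beta$ in $G^2$ is absorbed using the technical lemma of Appendix~\ref{Appendix_B} together with the estimates of \cite[Proposition 3.7]{Crandall_AMS_1992_Viscosity}, exploiting that the penalty term $\tfrac{1}{2\varepsilon}|b-b'|^2$ is quadratic so the optimal $\beta$ for sub- and supersolution can be matched and their contributions cancel to leading order. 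The singular part $|e|\le\kappa$ is bounded by $C\|D^2\phi\|_\infty \int_{|e|\le\kappa}(1\wedge|e|^2)\pi(\mathrm de)$, which $\to 0$ as $\kappa\to 0$ by the L\'evy integrability condition $\int_E(1\wedge|e|^2)\pi(\mathrm de)<\infty$. Combining these estimates, sending $\varepsilon\to 0$ then $\kappa\to 0$ then $\delta\to 0$, forces $\theta\le 0$, contradicting positivity of $M$; hence $M\le 0$, i.e. $\underline W\le\overline W$ on $\bar{\mathcal O}$, which is (\ref{eq_5_1_11}).
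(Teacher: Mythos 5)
Your overall architecture — doubling of variables, the nonlocal Jensen--Ishii lemma with the singular/nonsingular splitting of the integral term, the spectral-radius reformulation of the $\alpha$-supremum via Lemma \ref{Lemma_9}, and the cancellation of the $\beta$-increments in the quadratic doubling penalty — is the same as the paper's, and those parts of your plan are sound. However, there are two concrete gaps. First, your penalization $\delta(1+|a|^2)^{1/2}$ has the \emph{same} (linear) growth order as $\underline W-\overline W$, so the penalized supremum need not be attained at all; and even granting a maximizer $a_\delta$, the drift, diffusion and nonlocal terms acting on this penalization produce errors of order $\delta\,L(1+|a_\delta|)$ (since $f,\sigma,\chi$ have linear growth while $|D(1+|a|^2)^{1/2}|\le 1$), and you give no argument that $\delta(1+|a_\delta|)\to 0$. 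The paper resolves both issues at once with the coercive weight $\eta e^{-\lambda t}(1+|a|^2+b)$: the quadratic growth in $a$ guarantees attainment, and the factor $e^{-\lambda t}$ with $\lambda\ge\max\{C_2,C_4\}$ makes the $-\lambda\eta e^{-\lambda t}(1+|a|^2+b)$ contribution from the time derivative absorb all the error terms of size $C\eta e^{-\lambda t}(1+|a|^2+b)$ (see the estimates of $\Upsilon^{(11)}$ and $\Upsilon^{(13)}$). Your scheme has no analogue of this Gronwall-type device.

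Second, and more seriously, your source of strictness is the barrier $\delta/(T-t)$, which only shifts the time-derivative slot, i.e.\ the $(11)$-entry of the matrix $\mathcal{G}_{\psi}$, downward. In the spectral-radius form of the equation the relevant quantity is $\Lambda^{+}(\mathcal{G}_{\psi})$, the largest eigenvalue of the full block matrix, and this eigenvalue may be attained in the $(22)$-block (the directions corresponding to the unbounded control $\alpha$); lowering the $(11)$-entry alone then does not decrease $\Lambda^{+}$ at all, so no strict inequality survives and the final contradiction collapses. This is exactly why the paper perturbs the subsolution by $\nu\gamma(t,b)=-\nu(T-t)-\nu\log(1+b)$ in Lemma \ref{Lemma_10}: the convex-in-$b$ term $-\log(1+b)$ contributes $\psi^2(b)\mathcal{G}_{(22)}=-\max\{1,b^2\}/(2(1+b)^2)\,I_r\le -\tfrac18 I_r$ uniformly in $b\ge 0$, so the perturbation matrix is negative definite with $\Lambda^{+}\le -\tfrac18$ and subadditivity of the largest eigenvalue yields the margin $-\nu/8$ that drives the contradiction. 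Your proposal contains no convex-in-$b$ perturbation, hence no mechanism to make $\Lambda^{+}$ strictly negative. A smaller inaccuracy: the Crandall--Ishii matrix inequality does not bound $\|X-Y\|$; it bounds $z^\top Xz+w^\top Yw\le \tfrac{3}{\varepsilon}|z-w|^2$, and one must test with $z=\Delta v$, $w=\breve\Delta v$ and invoke the Lipschitz continuity of $\sigma$ (as in the paper's estimate of $\Upsilon^{(12)}$) rather than "estimating the difference of spectral radii by $\|X-Y\|$."
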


The proof of Theorem \ref{Theorem_3} is reported in Section \ref{Section_6}. Based on Theorem \ref{Theorem_3}, we state the uniqueness of the viscosity solution.
\begin{corollary}\label{Corollary_2}
Assume that Assumptions \ref{Assumption_1}, \ref{Assumption_2} and \ref{Assumption_3} hold. Then $W$ in (\ref{eq_10}) is a unique continuous viscosity solution of the HJB equation in (\ref{eq_4_1}). 
\end{corollary}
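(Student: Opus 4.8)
The plan is to deduce the corollary directly from the existence result (Theorem \ref{Theorem_2}), the comparison principle (Theorem \ref{Theorem_3}), and the growth estimate for $W$ (Lemma \ref{Lemma_4}). First I would recall that by Theorem \ref{Theorem_2}, the function $W$ defined in (\ref{eq_10}) is a continuous viscosity solution of the HJB equation (\ref{eq_4_1}), so existence is already settled; and by (i) of Lemma \ref{Lemma_4}, $W$ obeys the linear growth bound $|W(t,\widehat{a})| \le C(1+|a|)$ on $\bar{\mathcal{O}}$. Hence $W$ lies in the class of functions to which Theorem \ref{Theorem_3} applies.

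Next, suppose $\widetilde{W} \in C(\bar{\mathcal{O}})$ is any other viscosity solution of (\ref{eq_4_1}) satisfying the linear growth condition in $a \in \mathbb{R}^n$. Since $W$ is, in particular, a viscosity subsolution and $\widetilde{W}$ is a viscosity supersolution of (\ref{eq_4_1}), Theorem \ref{Theorem_3} yields $W(t,\widehat{a}) \le \widetilde{W}(t,\widehat{a})$ for all $(t,\widehat{a}) \in \bar{\mathcal{O}}$. Exchanging the roles of $W$ and $\widetilde{W}$, i.e. viewing $\widetilde{W}$ as a subsolution and $W$ as a supersolution, the same theorem gives $\widetilde{W}(t,\widehat{a}) \le W(t,\widehat{a})$ for all $(t,\widehat{a}) \in \bar{\mathcal{O}}$. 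Combining these two inequalities forces $W \equiv \widetilde{W}$ on $\bar{\mathcal{O}}$, which is the asserted uniqueness.

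I do not expect a genuine obstacle in the corollary itself: all the real work is packed into Theorem \ref{Theorem_3}, whose proof (carried out in Section \ref{Section_6}) must cope with the unbounded control variables $\alpha \in \mathbb{R}^r$ and $\beta \in G^2$ and with the singularity of the L\'evy measure $\pi$ in the nonlocal term. The only point worth flagging is that the uniqueness statement is to be read within the class of continuous viscosity solutions satisfying the linear growth condition in $a$, which is precisely the class in which Theorem \ref{Theorem_3} is stated and to which $W$ belongs by Lemma \ref{Lemma_4}; for second-order IPDEs of this type one cannot in general expect uniqueness without such a growth restriction.
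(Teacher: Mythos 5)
Your proposal is correct and follows exactly the paper's argument: existence from Theorem \ref{Theorem_2}, the linear growth bound from Lemma \ref{Lemma_4}, and then the comparison principle of Theorem \ref{Theorem_3} applied in both directions to any other solution in the linear-growth class. The paper's own proof is just a terser version of the same reasoning, so there is nothing to add.
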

\begin{proof}
  	In view of Theorem \ref{Theorem_2}, the value function $W$ in (\ref{eq_10}) is the viscosity solution of the HJB equation in (\ref{eq_4_1}). Note that since $W$ satisfies the linear growth condition from Lemma \ref{Lemma_4} and $W$ is both the viscosity subsolution and the supersolution of (\ref{eq_4_1}), the uniqueness follows from Theorem \ref{Theorem_3}. This completes the proof.
\end{proof}

\subsection*{Concluding Remarks} We have studied the state-constrained stochastic optimal problem for jump diffusion systems. Our main results are Theorems \ref{Theorem_1}, \ref{Theorem_2} and \ref{Theorem_3}, where we have shown that the original value function $V$ in (\ref{eq_4}) can be characterized by the zero-level set of the auxiliary value function $W$ in (\ref{eq_10}) (see (\ref{eq_3_7_5_3_2_1_1_1})). Note that $W$ can be characterized by solving the associated HJB equation in (\ref{eq_4_1}), since $W$ is a unique continuous viscosity solution of (\ref{eq_4_1}).

One possible potential future research problem would be to consider the two-player stochastic game framework, for which we need to generalize Theorem \ref{Theorem_1} using the notion of \emph{nonanticipative strategies}. The state-constrained problem with general BSDE (backward SDE) type recursive objective functionals would also be an interesting avenue to pursue. Applications to various mathematical finance problems will be studied in the near future.

 \section{Proof of Theorem \ref{Theorem_3}}\label{Section_6}
 This section is devoted to the proof of Theorem \ref{Theorem_3}.
 
\subsection{Equivalent Definitions of Viscosity Solutions}
To prove the uniqueness, we first provide two equivalent definitions of Definition \ref{Definition_1}. The HJB equation in (\ref{eq_4_1}) can be rewritten as follows:
\begin{align}
\label{eq_5_1}
\begin{cases}
\sup_{u \in U} \Bigl \{ \sup_{ \alpha \in \mathbb{R}^{r}} H^{(1)}(t,a,(DW,D^2W)(t,\widehat{a}); u,\alpha) \\
\qquad \qquad + \sup_{\beta \in G^2 }H^{(2)}(t,\widehat{a},(W,DW)(t,\widehat{a}); u,\beta) \Bigr \} = 0,~   (t,\widehat{a}) \in \mathcal{O}\\
W(T,\widehat{a}) = \max \{ m(a) - b \},~  (a,b) \in \mathbb{R}^n \times [0,\infty) \\
W(t,a,0) = W_0(t,a),~  (t,a) \in [0,T) \times \mathbb{R}^n,
\end{cases}
\end{align}
where with $D^2 W = \begin{bmatrix}
 	D^2 W_{(11)} & D^2 W_{(12)} \\
 	(D^2 W_{(12)})^\top & D^2W_{(22)} 
 \end{bmatrix}$, 
\begin{align*}
& H^{(1)}(t,a,(\partial_t W, DW,D^2W); u,\alpha) \\
& := - \partial_t W - d(a,\Gamma) - \langle D W, \widehat{f}(t,a,u)
 \rangle  - \frac{1}{2} \Tr (\sigma \sigma^\top (t,a,u) D^2 W_{(11)}) \nonumber \\
&~~~ - \alpha^\top \sigma^\top (t,a,u) D^2 W_{(12)} - \frac{1}{2} |\alpha|^2 D^2 W_{(22)},  \nonumber
\end{align*}
and
 \begin{align*}
   H^{(2)}(t,\widehat{a},(W,DW)(t,\widehat{a}); u,\beta)	:= & - \int_{E} [ W(t,a + \chi(t,a,u,e), b + \beta(e)) - W(t,\widehat{a}) ]\pi( \dd e) 
 \nonumber \\
 & + \int_{E}  \langle D W(t,\widehat{a}),  \widehat{\chi}(t,a,u,e,\beta)
 \rangle \pi(\dd e). 
 \end{align*}
 
 To avoid the possibility of $\sup_{\alpha \in \mathbb{R}^r} H^{(1)}=\infty$ due to the unboundedness of $\alpha$, we have the following result. The proof is analogous that for \cite[Lemma 4.1, Remark 4.5]{Bokanowski_SICON_2016} and \cite[Section 2.3]{Bruder_Hal_2005}.

\begin{lemma}\label{Lemma_9}
$H^{(1)}$ can be expressed as 
\begin{align*}
& \sup_{\alpha \in \mathbb{R}^r} H^{(1)}(t,a,(\partial_t W, DW,D^2W); u,\alpha) = \Lambda^{+}(\mathcal{G}_{\psi}(t,a,(\partial_t W, DW,D^2W);u)),
\end{align*}
where $\Lambda^{+}(A) := \sup_{|v|=1} |A v| = \sup_{v \neq 0} $, i.e., the largest eigenvalue of $A \in \mathbb{S}^n$, and
\begin{align*}
& \mathcal{G}_{\psi}(t,a,(\partial_t W, DW,D^2W);u) := \begin{bmatrix}
\mathcal{G}_{(11)} & \psi(b) \mathcal{G}_{(12)} \\
\psi(b) \mathcal{G}_{(12)}^\top & \psi^2(b) \mathcal{G}_{(12)}
 \end{bmatrix}
\end{align*}
with $\psi:[0,\infty) \rightarrow [0,\infty)$ being a continuous function and
\begin{align*}
\mathcal{G}_{(11)} & := 	- \partial_t W - d(a,\Gamma) - \langle D W, \widehat{f}(t,a,u)
 \rangle  - \frac{1}{2} \Tr (\sigma \sigma^\top (t,a,u) D^2 W_{(11)}) \\
\mathcal{G}_{(12)} & := - \frac{1}{2} (\sigma^\top (t,a,u) D^2 W_{(12)})^\top,~ \mathcal{G}_{(22)} := - \frac{1}{2} D^2 W_{(22)} I_{r}.
\end{align*}
\end{lemma}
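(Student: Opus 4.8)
The plan is to recognise $H^{(1)}(t,a,(\partial_t W,DW,D^2W);u,\cdot)$ as a quadratic form in $\alpha$ and then apply the standard congruence/eigenvalue reduction of the supremum of such a form over $\mathbb{R}^r$, in the spirit of \cite[Section 2.3]{Bruder_Hal_2005} and \cite[Lemma 4.1]{Bokanowski_SICON_2016}. Writing $\mu:=D^2W_{(22)}$ and $\tilde v:=\sigma^\top(t,a,u)D^2W_{(12)}\in\mathbb{R}^r$, one has $H^{(1)}(\cdot;u,\alpha)=\mathcal{G}_{(11)}-\langle \tilde v,\alpha\rangle-\tfrac12\mu|\alpha|^2$, so that with $\mathcal{G}_{(11)},\mathcal{G}_{(12)},\mathcal{G}_{(22)}$ exactly as in the statement,
\[
H^{(1)}(\cdot;u,\alpha)=\begin{bmatrix}1\\\alpha\end{bmatrix}^\top\begin{bmatrix}\mathcal{G}_{(11)}&\mathcal{G}_{(12)}\\\mathcal{G}_{(12)}^\top&\mathcal{G}_{(22)}\end{bmatrix}\begin{bmatrix}1\\\alpha\end{bmatrix},
\]
i.e. $H^{(1)}(\cdot;u,\alpha)=(1,\alpha)^\top\mathcal{G}_1(1,\alpha)$ where $\mathcal{G}_1$ denotes $\mathcal{G}_\psi$ for $\psi\equiv1$. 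For any positive continuous $\psi$, the matrix $\mathcal{G}_\psi$ is the congruence $D_\psi\,\mathcal{G}_1\,D_\psi$ of $\mathcal{G}_1$ with the invertible $D_\psi:=\mathrm{diag}(1,\psi(b)I_r)$; hence by Sylvester's law of inertia $\mathcal{G}_\psi$ and $\mathcal{G}_1$ have the same signature, and in particular $\Lambda^+(\mathcal{G}_\psi)$ and $\Lambda^+(\mathcal{G}_1)$ have the same sign. This is where the freedom in $\psi$ is exploited later: any positive continuous $\psi$ gives an admissible $\alpha$-free reformulation, and one eventually picks $\psi$ with prescribed (e.g. linear) growth in $b$ so that the spectral Hamiltonian $\Lambda^+(\mathcal{G}_\psi)$ obeys the growth and modulus-of-continuity estimates needed in the doubling-of-variables step of the comparison proof of Theorem \ref{Theorem_3}.

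The heart of the argument is the elementary identity relating $\sup_\alpha H^{(1)}$ to the largest eigenvalue of $\mathcal{G}_\psi$. I would perform the change of variables $v=(v_0,v_1)\in\mathbb{R}\times\mathbb{R}^r$ with $v_0\neq0$ and $\alpha:=\psi(b)v_1/v_0$, which gives $\langle\mathcal{G}_\psi v,v\rangle=v_0^2\,H^{(1)}(\cdot;u,\alpha)$ and $|v|^2=v_0^2(1+|\alpha|^2/\psi^2(b))$, while the directions $v_0=0$ contribute $\psi^2(b)\langle\mathcal{G}_{(22)}v_1,v_1\rangle=-\tfrac12\psi^2(b)\mu|v_1|^2$. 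Combining this with the variational formula $\Lambda^+(\mathcal{G}_\psi)=\sup_{v\neq0}\langle\mathcal{G}_\psi v,v\rangle/|v|^2$ and a scaling $\alpha=t\omega$, $t\to\infty$ (which sends the positive factor $\psi^2(b)/(\psi^2(b)+|\alpha|^2)$ to the correct limit and, when $\mu<0$, reproduces the blow-up $\sup_\alpha H^{(1)}=+\infty$ detected by the positive eigenvalue $-\tfrac12\psi^2(b)\mu$ of $\psi^2(b)\mathcal{G}_{(22)}$) then yields the claimed reformulation $\sup_{\alpha\in\mathbb{R}^r}H^{(1)}(\cdot;u,\alpha)=\Lambda^+(\mathcal{G}_\psi(\cdot;u))$ — precisely in the sense that $\sup_\alpha H^{(1)}\le0\iff\Lambda^+(\mathcal{G}_\psi)\le0$ and $\sup_\alpha H^{(1)}=0\iff\Lambda^+(\mathcal{G}_\psi)=0$, which is all that is invoked when this reformulation is substituted into the equation $\sup_u\{\cdots\}=0$ in (\ref{eq_5_1}). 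An equivalent and perhaps cleaner route is via Schur complements: $\sup_\alpha H^{(1)}(\cdot;u,\alpha)\le0\iff\mathcal{G}_1\preceq0\iff\mathcal{G}_\psi\preceq0\iff\Lambda^+(\mathcal{G}_\psi)\le0$, and symmetrically for the reversed inequality.

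The main obstacle is the degenerate regime $\mu=D^2W_{(22)}\le0$ (together with the case $\mu=0$, $\tilde v\neq0$), in which the left-hand supremum equals $+\infty$: one must verify that $\Lambda^+(\mathcal{G}_\psi)$ records exactly this dichotomy. This holds because $\mathcal{G}_{(22)}\not\preceq0$ forces a positive eigenvalue of $\mathcal{G}_\psi$ via a test vector supported on the last $r$ coordinates, while conversely any positive eigenvalue of $\mathcal{G}_\psi$ realised by a vector with vanishing first component must originate from $\mathcal{G}_{(22)}$; combined with the $v_0\neq0$ computation above, this pins down the sign of $\Lambda^+(\mathcal{G}_\psi)$ in all cases. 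A secondary technical point is that the substitution $\alpha\leftrightarrow v$ is not onto the unit sphere, so the degenerate directions $v_0=0$ (equivalently the behaviour as $|\alpha|\to\infty$) must be handled separately, which is the reason the argument proceeds through the Rayleigh quotient and a limiting ray rather than a single optimiser.
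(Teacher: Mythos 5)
The paper does not actually write out a proof of this lemma --- it only points to \cite[Lemma 4.1, Remark 4.5]{Bokanowski_SICON_2016} and \cite[Section 2.3]{Bruder_Hal_2005} --- and your argument (writing $H^{(1)}(\cdot;u,\alpha)$ as the quadratic form $(1,\alpha)^\top\mathcal{G}_1(1,\alpha)$, passing to $\mathcal{G}_\psi=D_\psi\,\mathcal{G}_1\,D_\psi$ by congruence with $D_\psi=\mathrm{diag}(1,\psi(b)I_r)$, and comparing $\sup_\alpha$ with the Rayleigh quotient of $\mathcal{G}_\psi$) is exactly the mechanism of those references. Your observation that the stated ``equality'' cannot be a literal identity of values (for $D^2W_{(22)}<0$ the left-hand side is $+\infty$ while $\Lambda^{+}(\mathcal{G}_\psi)$ is finite) is correct, as is the chain $\sup_\alpha H^{(1)}\le 0\iff\mathcal{G}_1\preceq 0\iff\mathcal{G}_\psi\preceq 0\iff\Lambda^{+}(\mathcal{G}_\psi)\le 0$ and hence $\sup_\alpha H^{(1)}>0\iff\Lambda^{+}(\mathcal{G}_\psi)>0$.

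There is, however, one concrete false step: the asserted biconditional ``$\sup_\alpha H^{(1)}=0\iff\Lambda^{+}(\mathcal{G}_\psi)=0$'' and the closing remark ``symmetrically for the reversed inequality'' fail in the degenerate case $D^2W_{(22)}=0$, $\sigma^\top(t,a,u)D^2W_{(12)}=0$, $\mathcal{G}_{(11)}<0$. There $H^{(1)}$ is constant in $\alpha$, so $\sup_{\alpha}H^{(1)}=\mathcal{G}_{(11)}<0$, while $\mathcal{G}_\psi=\mathrm{diag}(\mathcal{G}_{(11)},0,\dots,0)$ has $\Lambda^{+}(\mathcal{G}_\psi)=0$; thus $\Lambda^{+}(\mathcal{G}_\psi)\ge 0$ does \emph{not} imply $\sup_\alpha H^{(1)}\ge 0$, which is precisely the direction needed to transfer the supersolution inequality from (\ref{eq_5_4}) back to (\ref{eq_5_1}). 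Your test-vector analysis of the directions $v_0=0$ only shows that a \emph{positive} eigenvalue of $\psi^2(b)\mathcal{G}_{(22)}$ forces $\Lambda^{+}>0$; it does not exclude a \emph{zero} eigenvalue supported on $\{v_0=0\}$ sitting on top of a strictly negative $\sup_\alpha H^{(1)}$. To repair this you should either restrict the lemma to the one-sided statements that are actually true ($\le 0\iff\le 0$ and $>0\iff>0$) and verify that the comparison argument for Theorem \ref{Theorem_3} only ever invokes those, or explicitly exclude the degenerate configuration. This defect is inherited from the lemma as stated (and from the cited references), but your write-up asserts the false equivalence explicitly, so it must be fixed rather than left implicit.
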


\begin{remark}\label{Remark_8}
From Lemma \ref{Lemma_9}, the HJB equation in (\ref{eq_5_1}) is equivalent to
\begin{align}
\label{eq_5_4}
\begin{cases}
 \sup_{u \in U} \Bigl \{	\Lambda^{+}(\mathcal{G}_{\psi}(t,a,(\partial_t W,DW,D^2W)(t,\widehat{a});u)) \\
\qquad \qquad  + \sup_{\beta \in G^2 }H^{(2)}(t,\widehat{a},(W,DW)(t,\widehat{a}); u,\beta) \Bigr \} = 0, ~ (t,\widehat{a}) \in \mathcal{O}\\
W(T,\widehat{a}) = \max \{ m(a) - b \},~  (a,b) \in \mathbb{R}^n \times [0,\infty) \\
W(t,a,0) = W_0(t,a),~  (t,a) \in [0,T) \times \mathbb{R}^n.
\end{cases}
\end{align}
We will use (\ref{eq_5_4}) to prove the comparison principle in Theorem \ref{Theorem_3} with $\psi(b) := \max \{1,b\}$ for $b \in [0,\infty)$.
\end{remark}

For $\delta > 0$, let $E_{\delta} := \{ e \in E~|~ |e| < \delta\}$; hence, $E = E_{\delta} \cup E_{\delta}^C$. We then define
  \begin{align*}
  & H^{(2)}(t,\widehat{a},(W,DW); u,\beta) \\
  & =  H^{(21)}_{\delta}(t,\widehat{a},(W,DW);u,\beta) + H^{(22)}_{\delta}(t,\widehat{a},(W,DW);u,\beta),
  \end{align*}
  where
 \begin{align*}
 H^{(21)}_{\delta}(t,\widehat{a},(W,DW);u,\beta) := & -	\int_{E_{\delta}} [ W(t,a + \chi(t,a,u,e), b + \beta(e))  - W(t,\widehat{a}) ]\pi( \dd e)  \\
 & + \int_{E_{\delta}}  \langle D W(t,\widehat{a}),  \widehat{\chi}(t,a,u,e,\beta)
 \rangle \pi(\dd e), 
 \end{align*}
 and
 \begin{align*}
H^{(22)}_{\delta}(t,\widehat{a},(W,DW);u,\beta)
  := & -	\int_{E_{\delta}^C} [ W(t,a + \chi(t,a,u,e), b + \beta(e)) - W(t,\widehat{a}) ]\pi( \dd e) \\
 &+ \int_{E_{\delta}^C}  \langle D W(t,\widehat{a}),  \widehat{\chi}(t,a,u,e,\beta)
 \rangle \pi(\dd e). 
 \end{align*}

From \cite{Barles_SSR_1997, Barles_Poincare_2008, Buckdahn_SPTA_2011, Peng_Acta_2008, Pham_JMSEC_1998} (see \cite[Proposition 1]{Barles_Poincare_2008}), we have the following first equivalent definition of Definition \ref{Definition_1}:
\begin{lemma}\label{Lemma_7}
Suppose that $W$ is a viscosity subsolution (resp. supersolution) of the HJB equation in (\ref{eq_5_4}). Then it is necessary and sufficient to hold the following: 
\begin{enumerate}[(i)]
\item $W(T,\widehat{a}) \leq \max\{m(a) - b\}$  (resp. $W(T,\widehat{a}) \geq \max\{m(a) - b\}$) for $(a,b) \in \mathbb{R}^n \times [0,\infty)$ and $W(t,a,0) \leq W_0(t,a)$ (resp. $W(t,a,0) \geq W_0(t,a) $) for $(t,a) \in [0,T) \times \mathbb{R}^n$;
\item For all $\delta \in (0,1)$ and test functions $\phi \in C_b^{1,3}(\bar{\mathcal{O}}) \cap C_2(\bar{\mathcal{O}}) $, the following inequality holds at the global maximum (resp. minimum) point $(t,\widehat{a})  \in \mathcal{O}$ of $W-\phi$:
\begin{align*}
& \sup_{u \in U} \Bigl \{ 
\Lambda^{+}(\mathcal{G}_{\psi}(t,a,(\partial_t \phi, D\phi,D^2\phi)(t,\widehat{a});u)) \\
& ~~~ + \sup_{\beta \in G^2 } \bigl \{ H^{(21)}_{\delta}(t,\widehat{a},(\phi,D\phi)(t,\widehat{a});u,\beta)   + H^{(22)}_{\delta}(t,\widehat{a},(W,D\phi)(t,\widehat{a});u,\beta) \bigr \} \Bigr \} \leq  0~ \\
& ( \text{resp. }\\
&  \sup_{u \in U} \Bigl \{ 
\Lambda^{+}(\mathcal{G}_{\psi}(t,a,(\partial_t \phi, D\phi,D^2\phi)(t,\widehat{a});u)) \\
& ~~~ + \sup_{\beta \in G^2 } \bigl \{ H^{(21)}_{\delta}(t,\widehat{a},(\phi,D\phi)(t,\widehat{a});u,\beta)   + H^{(22)}_{\delta}(t,\widehat{a},(W,D\phi)(t,\widehat{a});u,\beta) \bigr \} \Bigr \} \geq 0).
\end{align*}
\end{enumerate}
\end{lemma}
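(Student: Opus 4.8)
The plan is to prove the two implications in Lemma~\ref{Lemma_7} separately, following \cite[Proposition~1]{Barles_Poincare_2008} (cf.\ \cite{Barles_SSR_1997}). The subsolution and supersolution statements are symmetric, and part~(i) is literally the boundary requirement of Definition~\ref{Definition_1}, so only the interior inequality in part~(ii) is at issue. Fix a test function $\phi \in C_b^{1,3}(\bar{\mathcal{O}}) \cap C_2(\bar{\mathcal{O}})$, a point $(t,\widehat{a}) \in \mathcal{O}$ at which $W - \phi$ attains a global maximum (subsolution case), normalized so that $W(t,\widehat{a}) = \phi(t,\widehat{a})$, and $\delta \in (0,1)$. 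The direction ``split form $\Rightarrow$ Definition~\ref{Definition_1}'' is the easy one: the maximum property forces $W \le \phi$ on $\bar{\mathcal{O}}$, hence for every $e \in E_{\delta}^C$, $u \in U$ and $\beta \in G^2$ one has $W(t,a+\chi(t,a,u,e),b+\beta(e)) - W(t,\widehat{a}) \le \phi(t,a+\chi(t,a,u,e),b+\beta(e)) - \phi(t,\widehat{a})$; integrating over $E_{\delta}^C$ against $\pi$ and adding the common term $\int_{E_{\delta}^C}\langle D\phi(t,\widehat{a}),\widehat{\chi}(t,a,u,e,\beta)\rangle\pi(\dd e)$ gives $H^{(22)}_{\delta}(t,\widehat{a},(\phi,D\phi)(t,\widehat{a});u,\beta) \le H^{(22)}_{\delta}(t,\widehat{a},(W,D\phi)(t,\widehat{a});u,\beta)$. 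Since $H^{(21)}_{\delta}$ and the $\Lambda^{+}$ term involve only $\phi$, adding $H^{(21)}_{\delta}$ and taking $\sup_{\beta}$ then $\sup_{u}$ shows that the left-hand side of Definition~\ref{Definition_1}'s inequality, written in the form (\ref{eq_5_4}), is dominated by the left-hand side of the split inequality, hence is $\le 0$. The supersolution case is identical with $W \ge \phi$ and reversed inequalities.

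The converse, ``Definition~\ref{Definition_1} $\Rightarrow$ split form for every $\delta$'', requires perturbing the test function. For $\rho > 0$ and $\epsilon > 0$ one constructs $\phi^{\epsilon}_{\rho} \in C_b^{1,3}(\bar{\mathcal{O}}) \cap C_2(\bar{\mathcal{O}})$ that coincides with $\phi$ on a ball of radius $\rho$ around $(t,\widehat{a})$ (so $\partial_t\phi^{\epsilon}_{\rho}$, $D\phi^{\epsilon}_{\rho}$ and $D^2\phi^{\epsilon}_{\rho}$ at $(t,\widehat{a})$ are those of $\phi$), satisfies $W \le \phi^{\epsilon}_{\rho} \le \phi$ on $\bar{\mathcal{O}}$ (so $(t,\widehat{a})$ remains a global maximum of $W - \phi^{\epsilon}_{\rho}$, and, by the linear growth of $W$ from Lemma~\ref{Lemma_4} and the $G^2$ bounds on $\chi$ and $\beta$ together with $\pi(E)<\infty$, $\phi$ furnishes a $\pi$-integrable majorant along the jump range), and decreases pointwise to $W$ outside that ball as $\epsilon \downarrow 0$. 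One applies the inequality of Definition~\ref{Definition_1} to $\phi^{\epsilon}_{\rho}$, sends $\epsilon \downarrow 0$ by dominated convergence, and then $\rho \downarrow 0$: the local and $\Lambda^{+}$ terms are unaffected; on the part of the $E_{\delta}$- and $E_{\delta}^C$-jump ranges lying inside the ball, replacing $\phi$ by $W$ costs at most $\pi(E)$ times the oscillation of $\phi - W$ over that ball, which vanishes as $\rho \downarrow 0$ because $\phi - W \ge 0$ is continuous with $(\phi-W)(t,\widehat{a})=0$; outside the ball the one-sided bound $W \le \phi^{\epsilon}_{\rho}\le\phi$ keeps the remaining contributions on the favorable side of the inequality. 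Passing to the limit yields the split inequality at $\delta$, and letting $\delta$ range over $(0,1)$ completes the proof. The supersolution case uses $\phi^{\epsilon}_{\rho} \uparrow W$.

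The step I expect to be the main obstacle is exactly the construction of $\phi^{\epsilon}_{\rho}$ and the limit passage in the nonlocal operator. In the bounded-jump diffusion setting of \cite{Barles_Poincare_2008} the jumps over $E_{\delta}$ remain in a fixed small ball, so $\phi^{\epsilon}$ automatically equals $\phi$ along the $E_{\delta}$-jump range and the singular part causes no trouble; in our model the additional control $\beta \in G^2$ makes the $b$-component $\beta(e)$ of $\widehat{\chi}(t,a,u,e,\beta)$ unbounded even for $e \in E_{\delta}$, so the sets reached by $\widehat{\chi}$ over $E_{\delta}$ and over $E_{\delta}^C$ may overlap and be unbounded, and one cannot force $\phi^{\epsilon}_{\rho}=\phi$ along the $E_{\delta}$-range while also forcing $\phi^{\epsilon}_{\rho}\to W$ along the $E_{\delta}^C$-range. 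The reconciling device, to be made precise, is the shrinking-ball argument just sketched, which relies essentially on $\pi(E)<\infty$ and $\int_E(1\wedge|e|^2)\pi(\dd e)<\infty$, on the continuity of $\phi$ and $W$ with $\phi\ge W$, and on the continuity of $H^{(21)}_{\delta}$ and $H^{(22)}_{\delta}$ in their arguments, in the spirit of the estimates of \cite[Proposition~3.7]{Crandall_AMS_1992_Viscosity} that reappear in Section~\ref{Section_6}; Remark~\ref{Remark_2} is also invoked to restrict to $(\alpha,\beta)$ uniformly bounded in the $\mathcal{L}_{\mathbb{F}}^2$ and $\mathcal{G}^2_{\mathbb{F}}$ senses. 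With these estimates in hand the limit passage goes through and the equivalence follows.
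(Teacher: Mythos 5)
Your proposal is correct, but the comparison with the paper is somewhat degenerate: the paper supplies no proof of Lemma~\ref{Lemma_7} at all, stating it with a bare pointer to \cite[Proposition~1]{Barles_Poincare_2008} and the surrounding references. Your reconstruction follows exactly the route that citation indicates: the implication from the split form to Definition~\ref{Definition_1} is immediate from $W\leq\phi$ at a global maximum together with the minus sign in front of the nonlocal integral (so replacing $\phi$ by $W$ in $H^{(22)}_{\delta}$ can only increase the expression), and the converse is obtained by squeezing a decreasing family of test functions $\phi^{\epsilon}_{\rho}$ between $W$ and $\phi$ that coincide with $\phi$ on a ball of radius $\rho$ about $(t,\widehat{a})$ and decrease to $W$ outside it, then passing to the monotone limit under the supremum in $(u,\beta)$. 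The one point where you go beyond a verbatim transcription of \cite{Barles_Poincare_2008} is genuine and worth recording: because $\beta\in G^2$ is an unbounded control, the image of $E_{\delta}$ under $\widehat{\chi}(t,a,u,\cdot,\beta)$ is not contained in any small neighborhood of $(t,\widehat{a})$, so one cannot arrange $\phi^{\epsilon}_{\rho}=\phi$ along the whole small-jump range. Your shrinking-ball repair does close this: after the limit in $\epsilon$, the discrepancy between the resulting nonlocal term and the split form consists of the $E_{\delta}$-jumps landing outside the ball, where the split form substitutes $\phi\geq W$ and therefore only lowers $-\int[\cdot]\,\pi(\dd e)$ (the favorable direction), and the $E_{\delta}^{C}$-jumps landing inside the ball, whose contribution is bounded by $\pi(E)\sup\{(\phi-W)(s,\widehat{a}^{\prime})\,:\,|(s,\widehat{a}^{\prime})-(t,\widehat{a})|\leq\rho\}$, which tends to $0$ as $\rho\downarrow 0$ since $\phi-W\geq 0$ is continuous and vanishes at $(t,\widehat{a})$; this uses $\pi(E)<\infty$ exactly as you say. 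The only step left implicit is the construction of $\phi^{\epsilon}_{\rho}$ itself in $C_b^{1,3}(\bar{\mathcal{O}})\cap C_2(\bar{\mathcal{O}})$ with $W\leq\phi^{\epsilon}_{\rho}\leq\phi$; given the Lipschitz continuity and linear growth of $W$ from Lemma~\ref{Lemma_4}, this is the standard mollification-and-gluing of \cite{Barles_Poincare_2008} and is not a gap.
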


The definition of parabolic superjet and subjet is given as follows \cite{Crandall_AMS_1992_Viscosity}:

\begin{definition}\label{Definition_2}
\begin{enumerate}[(i)]
\item For $W(t,\widehat{a})$, the superjet of $W$ at the point of $(t,\widehat{a}) \in \mathcal{O}$ is defined by
	\begin{align*}
	& \mathcal{P}^{1,2,+} W(t,\widehat{a}) := \{ (q,p,P) \in \mathbb{R} \times	\mathbb{R}^{n+1} \times \mathbb{S}^{n+1} ~|~  \\
	& \qquad \qquad \qquad W(t^\prime,\widehat{a}^\prime) \leq W(t,\widehat{a}) + q(s-t) + \langle p,\widehat{a}^\prime - \widehat{a} \rangle \\
	& \qquad \qquad \qquad + \frac{1}{2} \langle P (\widehat{a}^\prime - \widehat{a}),  \widehat{a}^\prime - \widehat{a} \rangle + o(|t^\prime-t| + |\widehat{a}^\prime - \widehat{a}|^2) \}.
	\end{align*}
	\item The closure of $\mathcal{P}^{1,2,+} W(t,\widehat{a})$ is defined by
	\begin{align*}
		& \overline{\mathcal{P}}^{1,2,+}W(t,\widehat{a}) := \{(q,p,P) \in \mathbb{R} \times	\mathbb{R}^{n+1} \times \mathbb{S}^{n+1} ~|~  \\
		& \qquad \qquad \qquad   (q,p,P) = \lim_{n \rightarrow \infty} (q_n,p_n,P_n)~\text{with}~ (q_n,p_n,P_n) \in \mathcal{P}^{1,2,+} W(t_n,\widehat{a}_n) \\
		& \qquad \qquad \qquad \text{and}~ \lim_{n \rightarrow \infty} (t_n,\widehat{a}_n,W(t_n,\widehat{a}_n)) = (t,\widehat{a},W(t,\widehat{a}))\}.
	\end{align*}
	\item For $W(t,\widehat{a})$, the subjet of $W$ at the point of $(t,\widehat{a}) \in \mathcal{O}$ and its closure are defined by
	\begin{align*}
	\mathcal{P}^{1,2,-} W(t,\widehat{a}) := - \mathcal{P}^{1,2,+} (-W(t,\widehat{a})),~\overline{\mathcal{P}}^{1,2,-} W(t,\widehat{a}) := - \overline{\mathcal{P}}^{1,2,+} (-W(t,\widehat{a})). 
	\end{align*}
	\end{enumerate}
\end{definition}

Using Definition \ref{Definition_2} and Lemma \ref{Lemma_7}, we have the following second equivalent definition of Definition \ref{Definition_1} (see \cite{Barles_SSR_1997, Pham_JMSEC_1998}, \cite[Lemma 3.5]{Peng_Acta_2008}, \cite[Proposition 1]{Barles_Poincare_2008}, and \cite[Lemmas 5.4 and 5.5, Chapter 4]{Yong_book}):

\begin{lemma}\label{Lemma_8}
	Suppose that $W$ is a viscosity subsolution (resp. supersolution) of the HJB equation in (\ref{eq_5_4}). Then it is necessary and sufficient to hold the following:
\begin{enumerate}[(i)]
\item $W(T,\widehat{a}) \leq \max\{m(a) - b\}$  (resp. $W(T,\widehat{a}) \geq \max\{m(a) - b\}$) for $(a,b) \in \mathbb{R}^n \times [0,\infty)$ and $W(t,a,0) \leq W_0(t,a)$ (resp. $W(t,a,0) \geq W_0(t,a) $) for $(t,a) \in [0,T) \times \mathbb{R}^n$;
\item For all $\delta \in (0,1)$ and test functions $\phi \in C_b^{1,3}(\bar{\mathcal{O}}) \cap C_2(\bar{\mathcal{O}})$ with the local maximum (resp. minimum) point $(t,\widehat{a}) \in \mathcal{O}$ of $W-\phi$, if $(q,p,P) \in \overline{\mathcal{P}}^{1,2,+}W(t,\widehat{a})$ (resp. $(q,p,P) \in \overline{\mathcal{P}}^{1,2,-}W(t,\widehat{a})$) with $p = D \phi(t,\widehat{a})$ and $P = D^2 \phi(t,\widehat{a})$,
then the following inequality holds:
	\begin{align*}
	&\sup_{u \in U} \Bigl \{ 
	\Lambda^{+}(\mathcal{G}_{\psi}(t,a,(q,p,P);u)) \\
	&\qquad  + \sup_{\beta \in G^2 } \bigl \{ H^{(21)}_{\delta}(t,\widehat{a},(\phi,D \phi)(t,\widehat{a});u,\beta)  + H^{(22)}_{\delta}(t,\widehat{a},W(t,\widehat{a}),p;u,\beta) \bigr \} \Bigr \} \leq 0\\
	& ( \text{resp.} \\
	&  \sup_{u \in U} \Bigl \{ 
		\Lambda^{+}(\mathcal{G}_{\psi}(t,a,(q,p,P);u))\\
	&\qquad  + \sup_{\beta \in G^2 } \bigl \{ H^{(21)}_{\delta}(t,\widehat{a},(\phi,D \phi)(t,\widehat{a});u,\beta)  + H^{(22)}_{\delta}(t,\widehat{a},W(t,\widehat{a}),p;u,\beta) \bigr \} \Bigr \} \geq 0).
	\end{align*}
	\end{enumerate}	
\end{lemma}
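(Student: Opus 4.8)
The plan is to derive the two equivalent definitions (Lemma \ref{Lemma_7} and Lemma \ref{Lemma_8}) in sequence, starting from Definition \ref{Definition_1}, and then showing that the jet-based reformulation is equivalent. First I would establish Lemma \ref{Lemma_7}: the passage from Definition \ref{Definition_1} to the $\delta$-split form. Here the only substantive content is that, at a global maximum point $(t,\widehat a)$ of $W-\phi$ with $W=\phi$ there, one has $W(t,\widehat a + \widehat\chi(\cdot,e,\beta)) - W(t,\widehat a) \le \phi(t,\widehat a + \widehat\chi(\cdot,e,\beta)) - \phi(t,\widehat a)$, so replacing $W$ by $\phi$ in the nonlocal integrand over the \emph{singular} region $E_\delta$ only enlarges the subsolution quantity (and the reverse for supersolutions over the regular region $E_\delta^C$ where $W$ is kept). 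The fact that the resulting inequality is not merely necessary but also \emph{sufficient} is the standard argument: if it holds for some $\delta$ with $\phi$ in the singular part, then letting $\delta\downarrow 0$ and using the dominated convergence theorem together with the bound $|\phi(t,\widehat a+\widehat\chi)-\phi(t,\widehat a)-\langle D\phi,\widehat\chi\rangle|\le C\|D^2\phi\|_\infty|\widehat\chi|^2$ and $\int_{E}(1\wedge|e|^2)\pi(\dd e)<\infty$ recovers the inequality of Definition \ref{Definition_1}. This is exactly \cite[Proposition 1]{Barles_Poincare_2008} adapted to our Hamiltonian, so I would cite that and spell out only the monotonicity-in-$W$ step.

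Next I would pass from test functions to jets, i.e.\ prove Lemma \ref{Lemma_8}. The key observation is that in the $\delta$-split Hamiltonian the only place where second derivatives of $\phi$ (and $\partial_t\phi$) enter is through $\mathcal{G}_\psi(t,a,(\partial_t\phi,D\phi,D^2\phi);u)$, while $H^{(21)}_\delta$ depends only on $(\phi,D\phi)$ locally near $(t,\widehat a)$ and $H^{(22)}_\delta$ depends only on $(W,D\phi)$. Hence, given $(q,p,P)\in\overline{\mathcal P}^{1,2,+}W(t,\widehat a)$, one can by the standard construction (see \cite[Lemmas 5.4 and 5.5, Chapter 4]{Yong_book} and \cite[Section 4]{Crandall_AMS_1992_Viscosity}) find a test function $\phi\in C^{1,3}_b\cap C_2$ touching $W$ from above at $(t,\widehat a)$ with $\partial_t\phi(t,\widehat a)=q$, $D\phi(t,\widehat a)=p$, $D^2\phi(t,\widehat a)=P$; applying Lemma \ref{Lemma_7} to this $\phi$ and noting that in $H^{(21)}_\delta$ only the values of $\phi$ in a neighborhood matter (which can be taken to match a fixed smooth representative) yields the jet inequality. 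The converse direction is immediate since any $\phi$ touching from above produces a jet element $(\partial_t\phi,D\phi,D^2\phi)\in\mathcal P^{1,2,+}W(t,\widehat a)\subset\overline{\mathcal P}^{1,2,+}W(t,\widehat a)$. One must be slightly careful that $H^{(21)}_\delta$ still carries the genuine test function $\phi$ rather than the jet, because the integral over $E_\delta$ sees values of $\phi$ away from $(t,\widehat a)$; this is why the statement keeps $\phi$ in $H^{(21)}_\delta$ and only replaces the local data by jets in $\mathcal G_\psi$ and in $H^{(22)}_\delta$ (which depends on $W$ and $p$ only).

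The main obstacle I anticipate is the interplay between the singular nonlocal term and the jet formulation: one cannot replace $\phi$ by the quadratic jet polynomial inside $\int_{E_\delta}$, because that polynomial need not dominate $W$ globally, so the monotonicity trick used for Lemma \ref{Lemma_7} would fail. The resolution, which I would make explicit, is that we do \emph{not} attempt such a replacement — $H^{(21)}_\delta$ retains the honest test function $\phi$, and what the jet reformulation buys us is only the decoupling of the second-order local operator $\mathcal G_\psi$ (and the semijet-stable regular nonlocal part $H^{(22)}_\delta$) from a specific choice of $\phi$, which is precisely what is needed later when doubling variables and invoking \cite[Proposition 3.7]{Crandall_AMS_1992_Viscosity}. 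Beyond this, the remaining verifications — measurability of the integrands, finiteness of $H^{(21)}_\delta$ via the $C^{1,3}_b$-regularity of $\phi$ and $\int_{E_\delta}|e|^2\pi(\dd e)<\infty$, and continuity in the jet variables so that the closure $\overline{\mathcal P}^{1,2,\pm}$ can be used in place of $\mathcal P^{1,2,\pm}$ — are routine and follow the cited references.
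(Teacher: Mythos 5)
Your proposal is correct and follows exactly the route the paper itself indicates: the paper gives no proof of Lemma \ref{Lemma_8}, relying instead on the citations to \cite[Proposition 1]{Barles_Poincare_2008} and \cite[Lemmas 5.4 and 5.5, Chapter 4]{Yong_book}, and your sketch is precisely an unpacking of those references (monotonicity of the nonlocal term at the extremum, dominated convergence via $\int_E(1\wedge|e|^2)\pi(\dd e)<\infty$, construction of a test function realizing a given jet, and stability under the closure $\overline{\mathcal P}^{1,2,\pm}$). Your explicit remark that the singular part $H^{(21)}_{\delta}$ must retain the genuine test function while only $\mathcal G_\psi$ and $H^{(22)}_{\delta}$ are expressed through the jet is exactly the point of the formulation, so nothing essential is missing.
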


\begin{remark}\label{Remark_9}
Lemma \ref{Lemma_8} is introduced due to the singularity of the L\'evy measure in zero, appearing in the nonlocal operator $H_{\delta}^{(21)}$. We will see that with the regularity of the test function, one can pass the limit of $H_{\delta}^{(21)}$ around the singular point of the measure.
\end{remark}

\subsection{Strict Viscosity Subsolution}
\begin{lemma}\label{Lemma_10}
Suppose that $\underline{W}(t,\widehat{a})$ is the viscosity subsolution of (\ref{eq_5_4}). Let
\begin{align*}	
\underline{W}_{\nu}(t,\widehat{a}) := \underline{W}(t,\widehat{a}) + \nu \gamma(t,b),
\end{align*}
where for $\nu >0$,
\begin{align*}
\gamma(t,b) := -(T-t) - \log (1+b).
\end{align*}
Then $\underline{W}_{\nu}$ is the strict viscosity subsolution of (\ref{eq_5_4}) in the sense that $\leq 0$ is replaced by $\leq - \frac{\nu}{8}$ in Definition \ref{Definition_1}.
\end{lemma}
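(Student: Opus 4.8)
The plan is to run the standard device that upgrades a subsolution to a \emph{strict} one by adding a smooth perturbation whose contribution to the HJB operator is negative, uniformly by the prescribed amount $\tfrac{\nu}{8}$. I will work with the equivalent equation (\ref{eq_5_4}) (Lemma \ref{Lemma_9} and Remark \ref{Remark_8}, with $\psi(b)=\max\{1,b\}$). Fix a test function $\phi\in C_b^{1,3}(\bar{\mathcal{O}})\cap C_2(\bar{\mathcal{O}})$ and a point $(t_0,\widehat a_0)=(t_0,a_0,b_0)\in\mathcal{O}$ (so $b_0>0$) at which $\underline W_\nu-\phi$ attains a global maximum, normalized so $\underline W_\nu(t_0,\widehat a_0)=\phi(t_0,\widehat a_0)$. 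Since $\gamma$ has bounded $t$- and $b$-derivatives of every order, no $a$-dependence, and at most linear growth, $\gamma\in C_b^{1,3}(\bar{\mathcal{O}})\cap C_2(\bar{\mathcal{O}})$, so $\phi-\nu\gamma$ is again admissible, and $\underline W-(\phi-\nu\gamma)=\underline W_\nu-\phi$ has its global maximum at $(t_0,\widehat a_0)$. Hence the subsolution property of $\underline W$ at $(t_0,\widehat a_0)$ with test function $\phi-\nu\gamma$ gives the inequality of (\ref{eq_5_4}) with right-hand side $\le 0$, and the task reduces to showing that replacing $\phi-\nu\gamma$ by $\phi$ raises the left-hand side by at most $-\tfrac{\nu}{8}$.

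The core computation is a term-by-term bookkeeping of the contribution of $\nu\gamma$, using $\partial_t\gamma\equiv 1$, $\partial_b\gamma=-1/(1+b)$, $\partial_{bb}\gamma=1/(1+b)^2>0$, and $\gamma$ independent of $a$. Because $\mathcal{G}_{\psi}$ is affine in the derivatives of its test-function slot, I expect $\mathcal{G}_{\psi}[\phi]=\mathcal{G}_{\psi}[\phi-\nu\gamma]-\Delta$ with $\Delta$ block-diagonal: its $(1,1)$ scalar block is $\nu\partial_t\gamma+\nu\langle D\gamma,\widehat f\rangle=\nu\bigl(1+l(t_0,a_0,u)/(1+b_0)\bigr)\ge\nu$ — here $\langle D\gamma,\widehat f\rangle=l/(1+b)\ge 0$ because the last component of $\widehat f$ is $-l$ and $l\ge 0$ by (ii) of Assumption \ref{Assumption_2} — the off-diagonal block vanishes since $\partial_a\partial_b\gamma=0$, and the $(2,2)$ block is $\tfrac{\nu}{2}\psi^2(b_0)\partial_{bb}\gamma(t_0,b_0)I_r=\tfrac{\nu\psi^2(b_0)}{2(1+b_0)^2}I_r$, which is $\ge\tfrac{\nu}{8}I_r$ because $\inf_{b\ge 0}\psi^2(b)/(2(1+b)^2)=\tfrac18$ (attained at $b=1$ for $\psi=\max\{1,b\}$). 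Thus $\Delta$ dominates $\tfrac{\nu}{8}I_{1+r}$ as a symmetric matrix, giving $\Lambda^{+}(\mathcal{G}_{\psi}[\phi])\le\Lambda^{+}(\mathcal{G}_{\psi}[\phi-\nu\gamma])-\tfrac{\nu}{8}$ for every $u\in U$. For the nonlocal part I will use $H^{(2)}[\phi;u,\beta]=H^{(2)}[\phi-\nu\gamma;u,\beta]+\nu H^{(2)}[\gamma;u,\beta]$ with
\[
H^{(2)}[\gamma;u,\beta]=-\int_E\bigl[\gamma(t_0,b_0+\beta(e))-\gamma(t_0,b_0)-\partial_b\gamma(t_0,b_0)\beta(e)\bigr]\pi(\dd e)\le 0,
\]
by convexity of $b\mapsto\gamma(t,b)$, so $\sup_{\beta\in G^2}H^{(2)}[\phi;u,\beta]\le\sup_{\beta\in G^2}H^{(2)}[\phi-\nu\gamma;u,\beta]$. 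Summing the two estimates and taking $\sup_{u\in U}$ then makes the operator of (\ref{eq_5_4}) at $\phi$ be $\le$ (its value at $\phi-\nu\gamma$) $-\tfrac{\nu}{8}\le-\tfrac{\nu}{8}$. The jet version (Lemma \ref{Lemma_8}) needs no new idea: the split $H^{(2)}=H^{(21)}_{\delta}+H^{(22)}_{\delta}$ merely distributes the integral over $E_\delta$ and $E_\delta^C$ (with the derivative argument of $H^{(22)}_{\delta}$ contributing the $\partial_b\gamma$ term), and the $\nu\gamma$-piece is $\le 0$ on each, for all $\delta\in(0,1)$, by the same convexity inequality. I will also observe that the boundary inequalities of Definition \ref{Definition_1}(i) are preserved for free, since $\gamma(t,b)=-(T-t)-\log(1+b)\le 0$ throughout $\bar{\mathcal{O}}$, so $\underline W_\nu\le\underline W$ there.

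The delicate part — and the reason $\gamma$ must be $-(T-t)-\log(1+b)$ rather than a bounded function — is making the gain \emph{uniform over the unbounded control sets} $\alpha\in\mathbb{R}^r$ and $\beta\in G^2$. This is exactly what the spectral-radius reformulation of Lemma \ref{Lemma_9} buys: it absorbs the $\alpha$-supremum, so that the $(2,2)$-block perturbation becomes a genuine positive shift $\tfrac{\nu\psi^2(b)}{2(1+b)^2}I_r$ whose size stays $\ge\tfrac{\nu}{8}$ thanks to the weight $\psi=\max\{1,b\}$ (without the weight, $1/(2(1+b)^2)\to 0$ as $b\to\infty$ and no uniform gain would survive), while the $\beta$-unboundedness is neutralized by the convexity of $\gamma$ in $b$, which forces its nonlocal contribution to be $\le 0$ for every $\beta$. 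The remaining technical point is that $\gamma(t,b_0+\beta(e))$ appears with $b_0+\beta(e)$ possibly negative inside the nonlocal integral; I will fix once and for all the $C^1$ convex extension of $\gamma$ to $\{b<0\}$ obtained by continuing with slope $\partial_b\gamma(t,0^+)=-1$, compatible with the extension of $W$ below zero in Lemma \ref{Lemma_5}, which — since the contact point lies in $\mathcal{O}$, where $b_0>0$ — does not affect any smoothness requirement at that point.
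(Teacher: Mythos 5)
Your proof is correct and follows essentially the same route as the paper's: the same perturbation $\gamma(t,b)=-(T-t)-\log(1+b)$, the same split of the operator in (\ref{eq_5_4}) into the $\Lambda^{+}$ part (where the weight $\psi(b)=\max\{1,b\}$ gives $\psi^2(b)/(2(1+b)^2)\geq 1/8$ and hence the uniform gain) and the nonlocal part $H^{(2)}$ (killed for every $\beta$ by convexity of $b\mapsto\gamma(t,b)$, with the supremum attained at $\beta=0$), together with the observation that $\gamma\leq 0$ preserves the boundary inequalities of Definition \ref{Definition_1}(i). The only differences are cosmetic and, if anything, slightly tighter than the paper: you express the spectral estimate as $\Delta\succeq\frac{\nu}{8}I_{r+1}$ and use monotonicity of the largest eigenvalue instead of the paper's subadditivity $\Lambda^{+}(A+B)\leq\Lambda^{+}(A)+\Lambda^{+}(B)$ (thereby also avoiding the paper's harmless double-counting of $d(a,\Gamma)$), and you explicitly supply a $C^1$ convex extension of $\gamma$ for $b+\beta(e)<0$ in the nonlocal integral, a point the paper leaves implicit.
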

\begin{proof}
	We first verify the boundary condition of $W_{\nu}$. Note that
	\begin{align*}
	\underline{W}_{\nu}(T,\widehat{a}) = 	\underline{W}(T,\widehat{a}) - \nu \log(1+b)
	\leq \max\{m(a) - b\},
	\end{align*}
	and by Lemma \ref{Lemma_6}
	\begin{align*}
	\underline{W}_{\nu}(t,a,0) &= \underline{W}(t,a,0) - \nu(T-t) \leq W_0(t,a).
	\end{align*}
	
	Now, let $\phi_{\nu} \in C_b^{1,3}(\bar{\mathcal{O}})$ be the test function such that
	\begin{align*}
	(\underline{W}_{\nu}- \phi_{\nu})(t,\widehat{a}) = \max_{(t^\prime,\widehat{a}^\prime) \in \mathcal{O}}(\underline{W}_{\nu}- \phi_{\nu})(t^\prime,\widehat{a}^\prime).
	\end{align*}
	Then from (\ref{eq_5_4}) and Definition \ref{Definition_1}, it is necessary to show that
\begin{align}
\label{eq_5_7_1}
& \sup_{u \in U} \Bigl \{	\Lambda^{+}(\mathcal{G}_{\psi}(t,a,(\partial_t \phi_{\nu}, D \phi_{\nu}, D^2 \phi_{\nu})(t,\widehat{a});u)) \\
& \qquad \qquad  + \sup_{\beta \in G^2 }H^{(2)}(t,\widehat{a},(\phi_{\nu},D\phi_{\nu})(t,\widehat{a}); u,\beta) \Bigr \} \leq - \frac{\nu}{8}. \nonumber
\end{align}

By defining
\begin{align*}
\underline{\phi}(t,a,b) := 	- \nu \gamma(t,b) + \phi_{\nu}(t,a,b),
\end{align*}
it is easy to see that $\phi \in C_b^{1,3}(\bar{\mathcal{O}})$ and 
\begin{align*}
(\underline{W}_{\nu}- \phi_{\nu})(t,\widehat{a}) & = \underline{W}(t,\widehat{a}) - (- \nu \gamma(t,b) + \phi_{\nu}(t,\widehat{a})) \\
	& = \underline{W}(t,\widehat{a}) - \underline{\phi}(t,\widehat{a}).	
\end{align*}
Then
\begin{align}
\label{eq_5_5}
\max_{(t^\prime,\widehat{a}^\prime) \in \mathcal{O}}(\underline{W}_{\nu}- \phi_{\nu})(t^\prime,\widehat{a}^\prime) & = (\underline{W}_{\nu}- \phi_{\nu})(t,\widehat{a}) \\
	& = (\underline{W} - \underline{\phi})(t,\widehat{a})  = \max_{(t^\prime,\widehat{a}^\prime) \in \mathcal{O}}(\underline{W} - \underline{\phi})(t^\prime,\widehat{a}^\prime). \nonumber
\end{align}

Since $\phi_{\nu} = \underline{\phi} + \nu \gamma$, $\Lambda^+$ is the norm, and $H^{(2)}$ is linear in $\phi_{\nu}$ and $D \phi_{\nu}$, 
\begin{align*}
& \sup_{u \in U} \Bigl \{	\Lambda^{+}(\mathcal{G}_{\psi}(t,a,(\partial_t \phi_{\nu}, D \phi_{\nu}, D^2 \phi_{\nu})(t,\widehat{a});u)) \\
& \qquad \qquad  + \sup_{\beta \in G^2 }H^{(2)}(t,\widehat{a},(\phi_{\nu},D\phi_{\nu})(t,\widehat{a}); u,\beta) \Bigr \} \leq I_{(1)} + I_{(2)}, \nonumber
\end{align*}
where
\begin{align*}
I^{(1)} & := \sup_{u \in U} \Bigl \{ \Lambda^+(\mathcal{G}_{\psi}(t,a,(\partial_t \underline{\phi} , D \underline{\phi} , D^2 \underline{\phi} ) (t,\widehat{a});u))  \\
&\qquad \qquad + \sup_{\beta \in G^2 }H^{(2)}(t,\widehat{a},(\underline{\phi},D \underline{\phi})(t,\widehat{a}); u,\beta) \Bigr \} \\
I^{(2)} & := \nu \sup_{u \in U} \Bigl \{ \Lambda^+(\mathcal{G}_{\psi}(t,a,(\partial_t \gamma , D \gamma , D^2 \gamma ) (t,b);u)) \\
&\qquad \qquad + \sup_{\beta \in G^2 }H^{(2)}(t,\widehat{a},(\gamma ,D \gamma )(t,b); u,\beta) \Bigr \}.
\end{align*}

We now provide the estimate of $ I^{(1)}$ and $ I^{(2)}$. First, since $\underline{W}$ is the viscosity subsolution and $\underline{\phi}$ is the corresponding test function in view of (\ref{eq_5_5}), we have
\begin{align}
\label{eq_5_7}
 I^{(1)} \leq 0.
\end{align}
For $ I^{(2)}$, we observe that
\begin{align*}
& H^{(2)}(t,\widehat{a},(\gamma ,D \gamma )(t,b); u,\beta) \\
 & = \int_{E} [ \log(1+b+\beta(e)) - \log(1+b)] \pi( \dd e) - \int_{E} \frac{1}{1+b} \beta(e)  \pi (\dd e).
\end{align*}
Since $b \in [0,\infty)$, it is easy to see that with $\beta(e) = 0$,  
\begin{align*}
\sup_{\beta \in G^2} H^{(2)}(t,\widehat{a},(\gamma ,D \gamma )(t,b); u,\beta) = 0.
\end{align*}
Recall $\psi(b) = \max \{1,b\}$ for $b \in [0,\infty)$. In the definition of $\mathcal{G}_{\psi}$,
\begin{align*}
\mathcal{G}_{(11)} &= -1 - \frac{1}{1+b} l(t,a,u) - d(a,\Gamma)	\\
\psi(b)\mathcal{G}_{(12)} &= 0,~ \psi^2(b)\mathcal{G}_{(22)} = -\max\{1,b^2\} \frac{1}{2(1+b)^2} I_r .
\end{align*}

Note that $l$ and $d(a,\Gamma)$ are positive for $b \in [0,\infty)$; therefore, $\mathcal{G}_{(11)} \leq -1$. Moreover, for $b \in [0,\infty)$, we can show that $-\frac{1}{2} I_r \leq \mathcal{G}_{(22)} \leq -\frac{1}{8} I_r$. Therefore,
\begin{align*}
& \Lambda^+(\mathcal{G}_{\psi}(t,a,(\partial_t \gamma , D \gamma , D^2 \gamma ) (t,b);u))\\
& = \Lambda^+ \Bigl (\begin{bmatrix}
	-1 - \frac{1}{1+b} l(t,a,u) - d(a,\Gamma) & 0 \\
	0 & -\max\{1,b^2\} \frac{1}{2(1+b)^2} I_r
\end{bmatrix} \Bigr ) \leq - \frac{1}{8},
\end{align*}
which implies
\begin{align}
\label{eq_5_8}
I_{(2)} \leq -(\nu/8). 
\end{align}
Then (\ref{eq_5_7}) and (\ref{eq_5_8}) lead to (\ref{eq_5_7_1}). We complete the proof.
\end{proof}

\subsection{Proof of Theorem \ref{Theorem_3}}

We continue to prove the uniqueness. For $\eta > 0$ and $\nu > 0$, let
\begin{align}
\label{eq_5_9}
\Psi_{\nu;\eta,\lambda} (t,a,b) &:= \underline{W}_{\nu}(t,a,b) - \overline{W}(t,a,b) - 2 \eta e^{-\lambda t}(1 + |a|^2 + b),
\end{align}
where $\lambda > 0$ will be specified later. Then it is necessary to show that
\begin{align}
\label{eq_5_9_1_1_1}
\Psi_{\nu;\eta,\lambda} (t,\widehat{a}) \leq 0,~ \forall (t,\widehat{a}) \in \bar{\mathcal{O}},
\end{align}
since by letting $\eta \downarrow 0$ and then $\nu \downarrow 0$, the desired result in (\ref{eq_5_1_11}) holds, i.e.,
\begin{align*}
\underline{W}(t,\widehat{a}) \leq \overline{W}(t,\widehat{a}),~ \forall \in \bar{\mathcal{O}}.
\end{align*}

Assume that (\ref{eq_5_9_1_1_1}) is not true, i.e., $\Psi_{\nu;\eta,\lambda} (t,\widehat{a}) > 0$ for some $(t,\widehat{a}) \in \bar{\mathcal{O}} > 0$. Consider,
\begin{align}
\label{eq_5_10}
\Psi_{\nu;\eta,\lambda}(\tilde{t},\tilde{a},\tilde{b}) = \max_{(t,a,b) \in \bar{\mathcal{O}}} \Psi_{\nu;\eta,\lambda}(t,a,b) > 0,	
\end{align}
where the maximum exists, since $\underline{W}_{\nu}$ and $\overline{W}$ satisfy the linear growth condition ($\log(1+b)$ also holds the linear growth condition) and $e^{-\lambda t}$ is decreasing. Actually, $(\tilde{t},\tilde{a},\tilde{b})$ is dependent on $(\nu,\eta,\lambda)$, i.e., $(\tilde{t},\tilde{a},\tilde{b}) := (\tilde{t}_{\nu;\eta,\lambda},\tilde{a}_{\nu;\eta,\lambda},\tilde{b}_{\nu;\eta,\lambda})$. 

Suppose that $\tilde{t} = T$. Then in view of (\ref{eq_5_9}) and the definition of $\underline{W}_{\nu}$, 
\begin{align*}
\Psi_{\nu;\eta,\lambda}(T,\tilde{a},\tilde{b}) = & \underline{W}(T,\tilde{a},\tilde{b}) + \nu \gamma(T,\tilde{b}) - \overline{W}(T,\tilde{a},\tilde{b}) \\	
	& - 2 \eta e^{-\lambda T}(1+|\tilde{a}|^2 + \tilde{b}) \leq 0,
\end{align*}
which contradicts (\ref{eq_5_10}). Hence, $\tilde{t} < T$. Similarly, when $\tilde{b}=0$, we have
\begin{align*}
\Psi_{\nu;\eta,\lambda}(\tilde{t},\tilde{a},0) & = \underline{W}(\tilde{t},\tilde{a},0) - \nu (T-t) \\
& - \underline{W}(\tilde{t},\tilde{a},0) - 2 \eta e^{-\lambda t}(1+|\tilde{a}|^2) \leq 0,
\end{align*}
which again contradicts (\ref{eq_5_10}). Hence, $\tilde{b} > 0$. This implies that $(\tilde{t},\tilde{a},\tilde{b}) \in \mathcal{O}$.

After doubling variables of $\Psi$, we consider 
\begin{align*}
\Psi_{\nu;\eta,\lambda}^{\kappa} (t,a,b,\breve{a},\breve{b}) 
= & \widehat{\Psi}_{\nu;\eta,\lambda}(t,a,b,\breve{a},\breve{b}) - \kappa \zeta(a,b,\breve{a},\breve{b}),
\end{align*}
where $\kappa >0$ and
\begin{align*}
\widehat{\Psi}_{\nu;\eta,\lambda}(t,a,b,\breve{a},\breve{b}) := & \underline{W}_{\nu}(t,a,b) - \overline{W}(t,\breve{a},\breve{b}) - \eta e^{-\lambda t}(1+|a|^2  + b) \\ 
& - \eta e^{-\lambda t}(1+|\breve{a}|^2  + \breve{b}) \\
& - \frac{\eta e^{-\lambda t} }{2} \Bigl (|a - \tilde{a}|^2 + (b-\tilde{b}) \Bigr )  - \frac{1}{2} |t-\tilde{t}|^2 \\
\zeta(a,b,\breve{a},\breve{b}) := & \frac{1}{2} \Bigl ( |a - \breve{a}|^2 + |b-\breve{b}|^2 \Bigr ).
\end{align*}
Since $\widehat{\Psi}_{\nu;\eta,\lambda}(t,a,b,a,b) \leq \Psi_{\nu;\eta,\lambda}(t,a,b)$ and $\widehat{\Psi}_{\nu;\eta,\lambda}(\tilde{t},\tilde{a},\tilde{b},\tilde{a},\tilde{b}) = \Psi_{\nu;\eta,\lambda}(\tilde{t},\tilde{a},\tilde{b})$, 
\begin{align}
\label{eq_5_11}
\Psi_{\nu;\eta,\lambda}(\tilde{t},\tilde{a},\tilde{b}) = \max_{(t,a,b) \in \mathcal{O}} \Psi_{\nu;\eta,\lambda}(t,a,b) = 	\max_{(t,a,b) \in \mathcal{O}} \widehat{\Psi}_{\nu;\eta,\lambda}(t,a,b,a,b).
\end{align}

We consider $(t^\prime_{\kappa},a^\prime_{\kappa}, b^\prime_{\kappa},  \breve{a}^\prime_{\kappa}, \breve{b}^\prime_{\kappa} )$ such that
\begin{align*}
\Psi_{\nu;\eta,\lambda}^{\kappa}(t^\prime_{\kappa},a^\prime_{\kappa}, b^\prime_{\kappa},  \breve{a}^\prime_{\kappa}, \breve{b}^\prime_{\kappa})  = \max_{(t,a, b,\breve{a}, \breve{b}) \in  \mathcal{O}  \times \mathbb{R}^n \times (0,\infty)} 
\Bigl \{ \widehat{\Psi}_{\nu;\eta,\lambda}(t,a,b,\breve{a},\breve{b}) - \kappa \zeta(a,b,\breve{a},\breve{b}) \Bigr \},
\end{align*}
which exists since $-\Psi_{\nu;\eta,\lambda}^{\kappa}$ is coercive. Then from \cite[Proposition 3.7]{Crandall_AMS_1992_Viscosity}, 
\begin{align*}
\begin{cases}
	\lim_{\kappa \rightarrow \infty} \kappa \zeta (a^\prime_{\kappa}, b^\prime_{\kappa},  \breve{a}^\prime_{\kappa}, \breve{b}^\prime_{\kappa}) = 0  \\
	\lim_{\kappa \rightarrow \infty} \Psi_{\nu;\eta,\lambda}^{\kappa}(t^\prime_{\kappa}, a^\prime_{\kappa}, b^\prime_{\kappa},  \breve{a}^\prime_{\kappa}, \breve{b}^\prime_{\kappa}) = \widehat{\Psi}_{\nu;\eta,\lambda}(t^\prime,a^\prime, b^\prime,  \breve{a}^\prime , \breve{b}^\prime)  \\
	\qquad \qquad \qquad \qquad \qquad \qquad \qquad = \max_{\zeta(a,b,\breve{a},\breve{b}) = 0} \widehat{\Psi}_{\nu;\eta,\lambda}(t,a, b,  \breve{a}, \breve{b}) \\
\lim_{\kappa \rightarrow \infty}  \zeta (a^\prime_{\kappa}, b^\prime_{\kappa},  \breve{a}^\prime_{\kappa}, \breve{b}^\prime_{\kappa}) = \zeta (a^\prime,b^\prime,\breve{a}^\prime,\breve{b}^\prime) = 0.
\end{cases}
\end{align*}
This, together with (\ref{eq_5_11}), implies that as $\kappa \rightarrow \infty$,
\begin{align}
\label{eq_5_14}
\begin{cases}
|a^\prime_{\kappa} - \breve{a}^\prime_{\kappa}|^2,~|b^\prime_{\kappa} - \breve{b}^\prime_{\kappa}|^2 \rightarrow 0 \\ 
\frac{\kappa}{2} |a^\prime_{\kappa} - \breve{a}^\prime_{\kappa}|^2,~ \frac{\kappa}{2}|b^\prime_{\kappa} - \breve{b}^\prime_{\kappa}|^2 \rightarrow 0 \\
t^\prime_{\kappa} \rightarrow \tilde{t}, ~ a^\prime_{\kappa},\breve{a}^\prime_{\kappa} \rightarrow \tilde{a},~ b^\prime_{\kappa},\breve{b}^\prime_{\kappa} \rightarrow \tilde{b}.
\end{cases}
\end{align}
For simplicity, we denote $(t^\prime,a^\prime, b^\prime,  \breve{a}^\prime , \breve{b}^\prime ) := (t^\prime_{\kappa},a^\prime_{\kappa}, b^\prime_{\kappa},  \breve{a}^\prime_{\kappa}, \breve{b}^\prime_{\kappa} )$. 

We let
\begin{align*}
	h_{\eta,\lambda}(t,a,b) &:= \eta e^{-\lambda t}(1+|a|^2 + b) + \frac{1}{2}|t-\tilde{t}|^2  + \eta e^{-\lambda t} \frac{1}{2} \Bigl (|a - \tilde{a}|^2 + (b-\tilde{b}) \Bigr ) \\
	\widehat{h}_{\eta,\lambda}(t,\breve{a},\breve{b}) & := \eta e^{-\lambda t}(1+|\breve{a}|^2 +\breve{b}) \\
	\zeta_{\kappa}(a,b,\breve{a},\breve{b}) & := \frac{\kappa}{2} \Bigl ( |a - \breve{a}|^2 + |b-\breve{b}|^2 \Bigr ).
\end{align*}
Then
\begin{align}
\label{eq_5_15_111}
\Psi_{\nu;\eta,\lambda}^{\kappa} 	(t,a,\breve{a},b,\breve{b}) =& (\underline{W}_{\nu}(t,a,b) - h_{\eta,\lambda}(t,a,b)) \\
& - (\overline{W}(t,\breve{a},\breve{b}) + \widehat{h}(t,\breve{a},\breve{b})) - \zeta_{\kappa}(a,b,\breve{a},\breve{b}). \nonumber
\end{align}
We invoke Crandall-Ishii's lemma in \cite[Theorem 8.3 and Remark 2.7]{Crandall_AMS_1992_Viscosity}, from which there exist
\begin{align*}
\begin{cases}
q+\widehat{q} = \partial_t \zeta(a^\prime, b^\prime,  \breve{a}^\prime , \breve{b}^\prime ) = 0	\\
(q + \partial_t h_{\eta,\lambda}, D_{(a,b)}(h_{\eta,\lambda} + \zeta_{\kappa}), P + D_{(a,b)}^2 h_{\eta,\lambda})(t^\prime, a^\prime, b^\prime) \in \overline{\mathcal{P}}^{1,2,+}\underline{W}_{\nu}(t^\prime, a^\prime, b^\prime) \\
(- \widehat{q} - \partial_t \widehat{h}_{\eta,\lambda}, - D_{(\breve{a},\breve{b})}( \widehat{h}_{\eta,\lambda} + \zeta_{\kappa}), -\widehat{P} - D_{(\breve{a},\breve{b})}^2 \widehat{h}_{\eta,\lambda})(t^\prime, \breve{a}^\prime, \breve{b}^\prime) \in \overline{\mathcal{P}}^{1,2,-}\overline{W}(t^\prime, \breve{a}^\prime, \breve{b}^\prime),
\end{cases}
\end{align*}
such that
\begin{align}
\label{eq_5_16_1_1_1_1}
-3 \kappa \begin{bmatrix}
 	I_{n+1} & 0 \\
 	0 & I_{n+1}
 \end{bmatrix} \leq \begin{bmatrix}
 	P & 0 \\
 	0 & \widehat{P}
 \end{bmatrix}	\leq 3 \kappa \begin{bmatrix}
 	I_{n+1} & -I_{n+1} \\
 	-I_{n+1} & I_{n+1}
 \end{bmatrix}.
\end{align}

Straightforward computation yields
\begin{align}
\label{eq_5_15_1_1_1}
\begin{cases}
\partial_t h_{\eta,\lambda}(t,a,b) = -\eta \lambda e^{- \lambda t}(1+|a|^2+b) + (t-\tilde{t}) -  \frac{\eta\lambda e^{-\lambda t}}{2}\Bigl (|a - \tilde{a}|^2 + (b-\tilde{b}) \Bigr ) \\
\partial_t \widehat{h}_{\eta,\lambda}(t,\breve{a},\breve{b}) = -\eta \lambda  e^{-\lambda t}(1 + |\breve{a}|^2 +\breve{b}) \\
D_{(a,b)} h_{\eta,\lambda}(t,a,b) = \begin{bmatrix}
2 \eta e^{-\lambda t} a + \eta e^{-\lambda t} (a - \tilde{a}) 	\\
\frac{3}{2} \eta e^{-\lambda t}
 \end{bmatrix},~ D_{(\breve{a},\breve{b})} \widehat{h}_{\eta,\lambda}(t,\breve{a},\breve{b}) = \begin{bmatrix}
2 \eta e^{-\lambda t}   \breve{a} \\
\eta e^{-\lambda t}
 \end{bmatrix} \\
 D_{(a,b)}^2 h_{\eta,\lambda}(t,a,b) = \begin{bmatrix}
3 \eta e^{-\lambda t} I_n & 0  	\\
0 & 0
 \end{bmatrix},~ D_{(\breve{a},\breve{b})}^2 \widehat{h}_{\eta,\lambda}(t,\breve{a},\breve{b}) = \begin{bmatrix}
2 \eta e^{-\lambda t}   I_n & 0 \\
0 & 0
 \end{bmatrix} \\
 D_{(a,b)} \zeta_{\kappa}(t,a,b,\breve{a},\breve{b}) = \begin{bmatrix}
 	\kappa (a - \breve{a}) \\
 	\kappa (b - \breve{b})
 \end{bmatrix},~ D_{(\breve{a},\breve{b})} \zeta_{\kappa}(t,a,b,\breve{a},\breve{b}) = \begin{bmatrix}
 	- \kappa (a - \breve{a}) \\
 	- \kappa (b - \breve{b})
 \end{bmatrix}.
 \end{cases}
\end{align}
Below, we use the superscript $^\prime$ in the above derivatives when they are evaluated at $(t^\prime,a^\prime, b^\prime,  \breve{a}^\prime , \breve{b}^\prime )$ (e.g. $\partial_t h_{\eta,\lambda}^\prime := \partial_t h_{\eta,\lambda}(t^\prime,a^\prime,b^\prime)$).

From Lemmas \ref{Lemma_8} and \ref{Lemma_10}, there exists $\phi \in C_b^{1,3}(\bar{\mathcal{O}}) \cap C_2(\bar{\mathcal{O}})$ such that
\begin{align*}
	&\sup_{u \in U} \Bigl \{ \Lambda^{+}(\mathcal{G}_{\psi}(t^\prime,a^\prime,(q + \partial_t h_{\eta,\lambda}^\prime, D_{(a,b)}(h_{\eta,\lambda}^\prime + \zeta^\prime_{\kappa}), P + D_{(a,b)}^2 h_{\eta,\lambda}^\prime); u)) \\
	&\qquad \qquad + \sup_{\beta \in G^2 } \bigl \{ H^{(21)}_{\delta}(t^\prime, a^\prime, b^\prime,(\phi,D \phi)(t^\prime, a^\prime, b^\prime);u,\beta) \\
	&\qquad \qquad + H^{(22)}_{\delta}(t^\prime, a^\prime, b^\prime, \underline{W}_{\nu}(t,a^\prime, b^\prime), D_{(a,b)}(h_{\eta,\lambda}^\prime + \zeta^\prime_{\kappa}) ;u,\beta) \bigr \} \Bigr \} \leq - \frac{\nu}{8},
\end{align*}
and
\begin{align*}
	&\sup_{u \in U} \Bigl \{ \Lambda^{+}(\mathcal{G}_{\psi}(t^\prime, \breve{a}^\prime,(- \widehat{q} - \partial_t \widehat{h}_{\eta,\lambda}^\prime, - D_{(\breve{a},\breve{b})}( \widehat{h}_{\eta,\lambda}^\prime + \zeta_{\kappa}^\prime), -\widehat{P} - D_{(\breve{a},\breve{b})}^2 \widehat{h}_{\eta,\lambda}^\prime); u)) \\
	&\qquad \qquad + \sup_{\beta \in G^2 } \bigl \{ H^{(21)}_{\delta}(t^\prime, \breve{a}^\prime, \breve{b}^\prime,(\phi,D \phi)(t^\prime, \breve{a}^\prime, \breve{b}^\prime);u,\beta) \\
	&\qquad \qquad + H^{(22)}_{\delta}(t^\prime, \breve{a}^\prime, \breve{b}^\prime, \overline{W}(t^\prime, \breve{a}^\prime, \breve{b}^\prime),- D_{(\breve{a},\breve{b})}( \widehat{h}_{\eta,\lambda}^\prime + \zeta_{\kappa}^\prime);u,\beta) \bigr \} \Bigr \} \geq 0.
\end{align*}
Then using $\sup\{ f(x) - g(s)\} \leq \sup f(x) - \sup g(x)$, we have
\begin{align*}
\Upsilon^{(1)} + \Upsilon^{(2)} + \Upsilon^{(3)} \geq \frac{\nu}{8},
\end{align*}
where
\begin{align*}	
\Upsilon^{(1)} & := \sup_{u \in U} \Bigl \{ \Lambda^{+}(\mathcal{G}_{\psi}(t^\prime, \breve{a}^\prime,(- \widehat{q} - \partial_t \widehat{h}_{\eta,\lambda}^\prime, - D_{(\breve{a},\breve{b})}( \widehat{h}_{\eta,\lambda}^\prime + \zeta_{\kappa}^\prime), -\widehat{P} - D_{(\breve{a},\breve{b})}^2 \widehat{h}_{\eta,\lambda}^\prime); u)) \\
& \qquad \qquad -  \Lambda^{+}(\mathcal{G}_{\psi}(t^\prime,a^\prime,(q + \partial_t h_{\eta,\lambda}^\prime, D_{(a,b)}(h_{\eta,\lambda}^\prime + \zeta^\prime_{\kappa}), P + D_{(a,b)}^2 h_{\eta,\lambda}^\prime); u))  \Bigr \} \\
\Upsilon^{(2)} & := \sup_{u \in U, \beta \in G^2 } \Bigl \{ H^{(21)}_{\delta}(t^\prime, \breve{a}^\prime, \breve{b}^\prime,(\phi,D \phi)(t^\prime, \breve{a}^\prime, \breve{b}^\prime);u,\beta) \\
&\qquad \qquad -  H^{(21)}_{\delta}(t^\prime, a^\prime, b^\prime,(\phi,D \phi)(t^\prime, a^\prime, b^\prime);u,\beta) \Bigr \} \\
\Upsilon^{(3)} & := \sup_{u \in U, \beta \in G^2 }  \Bigl \{ H^{(22)}_{\delta}(t^\prime, \breve{a}^\prime, \breve{b}^\prime, \overline{W}(t^\prime, \breve{a}^\prime, \breve{b}^\prime),- D_{(\breve{a},\breve{b})}( \widehat{h}_{\eta,\lambda}^\prime + \zeta_{\kappa}^\prime);u,\beta)   \\
&\qquad \qquad - H^{(22)}_{\delta}(t^\prime, a^\prime, b^\prime, \underline{W}_{\nu}(t,a^\prime, b^\prime), D_{(a,b)}(h_{\eta,\lambda}^\prime + \zeta^\prime_{\kappa}) ;u,\beta) \Bigr \}.
\end{align*}

We obtain the estimate of $\Upsilon^{(1)}$, $\Upsilon^{(2)}$ and $\Upsilon^{(3)}$ in (\ref{eq_5_18_2_1_0_2_1_1}), (\ref{eq_5_25_3_2_6_2_1_2}) and (\ref{eq_5_30_2_1_2_1}) separately below. In particular, (\ref{eq_5_18_2_1_0_2_1_1}), (\ref{eq_5_25_3_2_6_2_1_2}) and (\ref{eq_5_30_2_1_2_1}) show that for any $\lambda \geq \max \{C_2, C_4\}$,  where $C_2$ and $C_4$ are given below, we have
\begin{align*}
\frac{\nu}{8} \leq \lim_{\eta \downarrow 0}  \lim_{\kappa \rightarrow \infty} \lim_{\delta \downarrow 0}  \{ \Upsilon^{(1)} + \Upsilon^{(2)} + \Upsilon^{(3)} \} \leq 0,
\end{align*}
which leads to the desired contradiction, since $\nu > 0$ from Lemma \ref{Lemma_10}. Hence, (\ref{eq_5_9_1_1_1}) holds, and we have the comparison principle in (\ref{eq_5_1_11}). 

\subsection{Estimate of $\Upsilon^{(1)}$}
From the definition of $\mathcal{G}_{\psi}$, we denote
\begin{align*}
& \mathcal{G}_{\psi}(t^\prime, \breve{a}^\prime,(- \widehat{q} - \partial_t \widehat{h}_{\eta,\lambda}^\prime, - D_{(\breve{a},\breve{b})}( \widehat{h}_{\eta,\lambda}^\prime + \zeta_{\kappa}^\prime), -\widehat{P} - D_{(\breve{a},\breve{b})}^2 \widehat{h}_{\eta,\lambda}^\prime);u) \\
&  = \widehat{\mathcal{G}}^{(1)}_{\psi} + \widehat{\mathcal{G}}^{(2)}_{\psi} + \widehat{\mathcal{G}}^{(3)}_{\psi},
\end{align*}
where
\begin{align*}
\widehat{\mathcal{G}}^{(1)}_{\psi} & := \mathcal{G}_{\psi}(t^\prime, \breve{a}^\prime,(- \widehat{q} - \frac{1}{2}\partial_t \widehat{h}_{\eta,\lambda}^\prime, - D_{(\breve{a},\breve{b})}( \widehat{h}_{\eta,\lambda}^\prime + \zeta_{\kappa}^\prime),0);u) \\
\widehat{\mathcal{G}}^{(2)}_{\psi} &:= \mathcal{G}_{\psi}(t^\prime, \breve{a}^\prime,(0,0,-\widehat{P});u) \\
\widehat{\mathcal{G}}^{(3)}_{\psi} &:=\mathcal{G}_{\psi}(t^\prime, \breve{a}^\prime,(- \frac{1}{2}\partial_t \widehat{h}_{\eta,\lambda}^\prime,0,- D_{(\breve{a},\breve{b})}^2 \widehat{h}_{\eta,\lambda}^\prime);u),
\end{align*}
and
\begin{align*}
& \mathcal{G}_{\psi}(t^\prime,a^\prime,(q + \partial_t h_{\eta,\lambda}^\prime, D_{(a,b)}(h_{\eta,\lambda}^\prime + \zeta^\prime_{\kappa}), P + D_{(a,b)}^2 h_{\eta,\lambda}^\prime); u) \\
&  = \mathcal{G}^{(1)}_{\psi} + \mathcal{G}^{(2)}_{\psi} + \mathcal{G}^{(3)}_{\psi},
\end{align*}
where
\begin{align*}
\mathcal{G}^{(1)}_{\psi} & := \mathcal{G}_{\psi}(t^\prime,a^\prime,(q + \frac{1}{2} \partial_t h_{\eta,\lambda}^\prime, D_{(a,b)}(h_{\eta,\lambda}^\prime + \zeta^\prime_{\kappa}),0);u) \\
\mathcal{G}^{(2)}_{\psi} & := 	\mathcal{G}_{\psi}(t^\prime,a^\prime,(0,0, P); u) \\
\mathcal{G}^{(3)}_{\psi} & := \mathcal{G}_{\psi}(t^\prime,a^\prime,(\frac{1}{2} \partial_t h_{\eta,\lambda}^\prime, 0, D_{(a,b)}^2 h_{\eta,\lambda}^\prime); u).
\end{align*}

Then using $|A-B| \geq |A| - |B|$, we have
\begin{align*}
\Upsilon^{(1)} := &\sup_{u \in U} \Bigl \{ \Lambda^{+}(\widehat{\mathcal{G}}^{(1)}_{\psi} + \widehat{\mathcal{G}}^{(2)}_{\psi} + \widehat{\mathcal{G}}^{(3)}_{\psi}) -  \Lambda^{+}(\mathcal{G}^{(1)}_{\psi} + \mathcal{G}^{(2)}_{\psi} + \mathcal{G}^{(3)}_{\psi})  \Bigr \} \\
\leq & \sup_{u \in U} \Lambda^{+}(\widehat{\mathcal{G}}^{(1)}_{\psi} + \widehat{\mathcal{G}}^{(2)}_{\psi} + \widehat{\mathcal{G}}^{(3)}_{\psi} - (\mathcal{G}^{(1)}_{\psi} + \mathcal{G}^{(2)}_{\psi} + \mathcal{G}^{(3)}_{\psi}))\\
\leq &  \Upsilon^{(11)} + \Upsilon^{(12)} +  \Upsilon^{(13)},
\end{align*}
where
\begin{align*}
\Upsilon^{(11)} &:= \sup_{u \in U} \Lambda^{+} (\widehat{\mathcal{G}}^{(1)}_{\psi} - \mathcal{G}^{(1)}_{\psi}),~ \Upsilon^{(12)} := \sup_{u \in U} \Lambda^{+} (\widehat{\mathcal{G}}^{(2)}_{\psi} - \mathcal{G}^{(2)}_{\psi}) \\
\Upsilon^{(13)} &:= \sup_{u \in U} \Lambda^{+} (\widehat{\mathcal{G}}^{(3)}_{\psi} - \mathcal{G}^{(3)}_{\psi}).	
\end{align*}

The estimate of $\Upsilon^{(1i)}$, $i=1,2,3$, are obtained in (\ref{eq_5_18_3_2_3_2}), (\ref{eq_5_21_1_3_2_3_1}) and (\ref{eq_5_22_4_2_1_1}) separately below, which show that for any $\lambda \geq \max \{C_2, C_4\}$,  where $C_2$ and $C_4$ are given below,
\begin{align}
\label{eq_5_18_2_1_0_2_1_1}
\lim_{\kappa \rightarrow \infty} \Upsilon^{(1)} \leq \lim_{\kappa \rightarrow \infty} \{ \Upsilon^{(11)} + \Upsilon^{(12)} +  \Upsilon^{(13)} \} \leq 0.
\end{align}

\subsubsection{Estimate of $\Upsilon^{(11)}$} From definition,
\begin{align*}
\widehat{\mathcal{G}}_{\psi}^{(1)} &= \begin{bmatrix}
 	\widehat{q} + \frac{1}{2}\partial_t \widehat{h}_{\eta,\lambda}^\prime - d(\breve{a}^\prime,\Gamma) + \langle D_{(\breve{a},\breve{b})}(\widehat{h}_{\eta,\lambda}^\prime + \zeta_{\kappa}^\prime), \widehat{f}(t^\prime,\breve{a}^\prime,u) \rangle & 0 \\
 	0 & 0
 \end{bmatrix} \\
\mathcal{G}_{\psi}^{(1)} &= \begin{bmatrix}
 	-q - \frac{1}{2}\partial_t h_{\eta,\lambda}^\prime - d(a^\prime,\Gamma) - \langle D_{(a,b)}(h_{\eta,\lambda}^\prime + \zeta_{\kappa}^\prime), \widehat{f}(t^\prime,a^\prime,u) \rangle & 0 \\
 	0 & 0
 \end{bmatrix},
\end{align*}
which implies (note that $\widehat{q} + q = 0$)
\begin{align*}
\Upsilon^{(11)} & = \sup_{u \in U} \max \{\partial_t \widehat{h}_{\eta,\lambda}^\prime + \partial_t h_{\eta,\lambda}^\prime + \langle D_{(\breve{a},\breve{b})}(\widehat{h}_{\eta,\lambda}^\prime + \zeta_{\kappa}^\prime), \widehat{f}(t^\prime,\breve{a}^\prime,u) \rangle \\
&\qquad \qquad + \langle D_{(a,b)}(h_{\eta,\lambda}^\prime + \zeta_{\kappa}^\prime), \widehat{f}(t^\prime,a^\prime,u) \rangle + (d(a^\prime,\Gamma) - d(\breve{a}^\prime,\Gamma)) ,0 \}.
\end{align*}

We have
\begin{align*}
& \frac{1}{2}(\partial_t \widehat{h}_{\eta,\lambda}^\prime + \partial_t h_{\eta,\lambda}^\prime)	 \\
& =  -  \frac{\eta}{2} \lambda e^{-\lambda t^\prime}(1+|a^\prime|^2 + b^\prime) + (t-\tilde{t}) - \frac{\eta\lambda e^{-\lambda t^\prime} }{4}  \Bigl (|a^\prime - \tilde{a}|^2 + |b^\prime -\tilde{b}|^4 \Bigr ) \\
& \quad -  \frac{\eta}{2}  \lambda e^{-\lambda t^\prime}(1 + |\breve{a}^\prime|^2 + \breve{b}^\prime ) \\
& \rightarrow  - \eta  \lambda e^{-\lambda \tilde{t}}(1 + |\tilde{a}|^2 + \tilde{b})~ \text{as $\kappa \rightarrow \infty$ due to (\ref{eq_5_14})},
\end{align*}
and using Cauchy-Schwarz inequality, and Assumptions \ref{Assumption_1} and \ref{Assumption_2},
\begin{align*}
& \langle D_{(\breve{a},\breve{b})}(\widehat{h}_{\eta,\lambda}^\prime + \zeta_{\kappa}^\prime), \widehat{f}(t^\prime,\breve{a}^\prime,u) \rangle + \langle D_{(a,b)}(h_{\eta,\lambda}^\prime + \zeta_{\kappa}^\prime), \widehat{f}(t^\prime,a^\prime,u) \rangle	 \\
& \leq |2 \eta e^{-\lambda t^\prime}   \breve{a}^\prime - \kappa (a^\prime - \breve{a}^\prime)||1+|\breve{a}^\prime|| + |\eta e^{-\lambda t^\prime} - \kappa (b^\prime - \breve{b}^\prime)||1 + \breve{a}^\prime| \\
& \quad + |2 \eta e^{-\lambda t^\prime} a^\prime + \eta e^{-\lambda t^\prime} (a^\prime - \tilde{a}) 	 + \kappa (a^\prime - \breve{a}^\prime)| |1 + |a^\prime|| \\
& \quad + |(3/2) \eta e^{-\lambda t^\prime} + \kappa (b^\prime - \breve{b}^\prime)| |1 + |a^\prime|| \\
& \leq C_1 \eta e^{-\lambda t^\prime} ( 1 + |a^\prime|^2 + |\breve{a}^\prime|^2 + b^\prime + \breve{b}^\prime) \\
& \rightarrow  C_2 \eta e^{-\lambda \tilde{t}}(1 + |\tilde{a}|^2 + \tilde{b})~ \text{as $\kappa \rightarrow \infty$ due to (\ref{eq_5_14})}.
\end{align*}
Moreover, from Assumption \ref{Assumption_3},
\begin{align*}
	|d(a^\prime,\Gamma) - d(\breve{a}^\prime,\Gamma) | \leq C |a^\prime - \breve{a}^\prime| \rightarrow 0~\text{as $\kappa \rightarrow \infty$ due to (\ref{eq_5_14})}.
\end{align*}

Hence,
\begin{align*}
\lim_{\kappa \rightarrow \infty} \Upsilon^{(11)} \leq \max \{(-\lambda + C_2)\eta e^{-\lambda \tilde{t}}(1+|\tilde{a}|^2 + |\tilde{b}|^2),0 \},
\end{align*}
and for any $\lambda > 0$ with $\lambda \geq C_2$, we have
\begin{align}	
\label{eq_5_18_3_2_3_2}
\lim_{\kappa \rightarrow \infty} \Upsilon^{(11)} \leq 0.
\end{align}

\subsubsection{Estimate of $\Upsilon^{(12)}$}
From definition,
\begin{align*}
\widehat{\mathcal{G}}^{(2)}_{\psi} &= \begin{bmatrix}
 	\frac{1}{2} \Tr \bigl (\sigma \sigma^\top (t^\prime,\breve{a}^\prime,u) \widehat{P}_{(11)} \bigr ) & \frac{1}{2} \psi(\breve{b}^\prime) \widehat{P}_{(12)}^\top \sigma(t^\prime,\breve{a}^\prime,u) \\
 	\frac{1}{2} \psi(\breve{b}^\prime)\sigma^\top(t^\prime,\breve{a}^\prime,u) \widehat{P}_{(12)} & \frac{1}{2} \psi^2(\breve{b}^\prime) \widehat{P}_{(22)} I_r
 \end{bmatrix}	\\
\mathcal{G}^{(2)}_{\psi} &= \begin{bmatrix}
 	- \frac{1}{2} \Tr \bigl (\sigma \sigma^\top (t^\prime,a^\prime,u ) P_{(11)} \bigr  ) & - \frac{1}{2} \psi(b^\prime) P_{(12)}^\top \sigma(t^\prime,a^\prime,u) \\
 	- \frac{1}{2} \psi(b^\prime) \sigma^\top(t^\prime,a^\prime,u) P_{(12)} & - \frac{1}{2} \psi^2(b^\prime) P_{(22)} I_r
 \end{bmatrix}.
\end{align*}
Let
\begin{align*}
\Delta := \begin{bmatrix}
 	\sigma^\top (t^\prime, a^\prime,u) & 0 \\
 	0 & \psi(b^\prime)
 \end{bmatrix},~ \breve{\Delta}	:= \begin{bmatrix}
 	\sigma^\top (t^\prime, \breve{a}^\prime,u) & 0 \\
 	0 & \psi(\breve{b}^\prime)
 \end{bmatrix}.
\end{align*}
Using (\ref{eq_5_16_1_1_1_1}) and Assumption \ref{Assumption_1}, together with Cauchy-Schwarz inequality, we can show that for any $z \in \mathbb{R}^{r+1}$,
\begin{align}
\label{eq_5_18_1_1_1_1}
z^\top \begin{bmatrix}
 \Delta & \breve{\Delta}	
 \end{bmatrix}
	\begin{bmatrix}
		P & 0 \\
		0 & \widehat{P}
	\end{bmatrix} \begin{bmatrix}
 \Delta^\top \\
 \breve{\Delta}^\top	
 \end{bmatrix} z & \leq 3 \kappa z^\top \begin{bmatrix}
 \Delta & \breve{\Delta}	
 \end{bmatrix}
	\begin{bmatrix}
		I_{n+1} & -I_{n+1} \\
		-I_{n+1} & I_{n+1}
	\end{bmatrix} \begin{bmatrix}
 \Delta^\top \\
 \breve{\Delta}^\top	
 \end{bmatrix} z \\
 & = 3 \kappa |(\Delta - \breve{\Delta})^\top z|^2 \nonumber \\
 & \leq 3 \kappa \| \Delta^\top - \breve{\Delta} ^\top\|_{F}^2 |z|^2 \leq 3 \kappa C^2 (|a^\prime - \breve{a}^\prime|^2 + |b^\prime - \breve{b}^\prime|^2) |z|^2. \nonumber
 \end{align}
 
For $j \in \{1,\ldots,r\}$, let 
\begin{align*}
z_{(j)} := \begin{bmatrix}
 	\hat{z}_{(j)}^\top & z_j
 \end{bmatrix}^\top \in \mathbb{R}^{r+1},	
\end{align*}
where $z_j \in \mathbb{R}$ and $\hat{z}_{(j)}$ is an $r$-dimensional vector with $j$th entry being $\hat{z} \in \mathbb{R}$ and other entries being zero, i.e., $\hat{z}_{(j)} := \begin{bmatrix}
 0 & \cdots & 0 & \hat{z} & 0 & \cdots & 0 
 \end{bmatrix}
$. Then
\begin{align}
\label{eq_5_19_1111_1_1_1}
& \frac{1}{2} z_{(j)}^\top	(\Delta P \Delta^\top + \breve{\Delta} \widehat{P} \breve{\Delta}^\top) z_{(j)} \\
& = \frac{1}{2} \hat{z}^2  \Bigl ( \sigma^\top (t^\prime,\breve{a}^\prime,u) \widehat{P}_{(11)} \sigma (t^\prime,\breve{a}^\prime,u)  + \sigma^\top (t^\prime,a^\prime,u) P_{(11)}  \sigma  (t^\prime,a^\prime,u) \Bigr )_{jj} \nonumber \\
& \quad + \hat{z} \Bigl ( \psi(\breve{b}^\prime) \widehat{P}_{(12)}^\top \sigma(t^\prime,\breve{a}^\prime,u) + \psi(b^\prime) P_{(12)}^\top \sigma(t^\prime,a^\prime,u) \Bigr )_{j} z_{j}    \nonumber \\
& \quad + \frac{1}{2} z_{j}^2 \Bigl ( \psi^2(\breve{b}^\prime) \widehat{P}_{(22)} +  \psi^2(b^\prime) P_{(22)}\Bigr )  \nonumber \\
& \leq \frac{3}{2} \kappa C^2 (|a^\prime - \breve{a}^\prime|^2 + |b^\prime - \breve{b}^\prime|^2) (\hat{z}_1^2 +z_j^2), \nonumber
\end{align}
where the inequality follows from (\ref{eq_5_18_1_1_1_1}).\footnote{In (\ref{eq_5_19_1111_1_1_1}) and below, $(\cdot)_{j}$ and $(\cdot)_{jj}$ indicate the $j$th component of the vector, and the $j$th element of the row and column of the matrix, respectively.}

Let
\begin{align*}
y := \begin{bmatrix}
 \hat{z} & y_2^\top	
 \end{bmatrix}^\top,~ y_2 := \begin{bmatrix}
 	z_1 & \cdots & z_r
 \end{bmatrix}^\top.
\end{align*}
Using (\ref{eq_5_19_1111_1_1_1}), we can show that
\begin{align*}
& y^\top (	\widehat{\mathcal{G}}^{(2)}_{\psi} - \mathcal{G}^{(2)}_{\psi} ) y \\
& = \frac{1}{2} \hat{z}^2 \Bigl (\Tr \bigl (\sigma \sigma^\top (t^\prime,\breve{a}^\prime,u) \widehat{P}_{(11)} \bigr ) + \Tr (\sigma \sigma^\top (t^\prime,a^\prime,u) P_{(11)} ) \Bigr ) \\
& \quad + \hat{z} \Bigl ( \psi(\breve{b}^\prime) \widehat{P}_{(12)}^\top \sigma(t^\prime,\breve{a}^\prime,u) + \psi(b^\prime) P_{(12)}^\top \sigma(t^\prime,a^\prime,u) \Bigr ) y_2 \\
& \quad + \frac{1}{2} y_2^\top \Bigl ( \psi^2(\breve{b}^\prime) \widehat{P}_{(22)} I_r +  \psi^2(b^\prime) P_{(22)} I_r \Bigr ) y_2 \\
& = \frac{1}{2} \sum_{j=1}^r \hat{z}^2  \Bigl ( \sigma^\top (t^\prime,\breve{a}^\prime,u) \widehat{P}_{(11)} \sigma (t^\prime,\breve{a}^\prime,u)  + \sigma^\top (t^\prime,a^\prime,u) P_{(11)}  \sigma  (t^\prime,a^\prime,u) \Bigr )_{jj} \\
& \quad + \sum_{j=1}^r \hat{z} \Bigl ( \psi(\breve{b}^\prime) \widehat{P}_{(12)}^\top \sigma(t^\prime,\breve{a}^\prime,u) + \psi(b^\prime) P_{(12)}^\top \sigma(t^\prime,a^\prime,u) \Bigr )_{j} z_{j} \\
& \quad + \frac{1}{2} \sum_{j=1}^r z_{j}^2 \Bigl ( \psi^2(\breve{b}^\prime) \widehat{P}_{(22)} +  \psi^2(b^\prime) P_{(22)}\Bigr ) \\
& \leq \frac{3}{2} \kappa C^2 (|a^\prime - \breve{a}^\prime|^2 + |b^\prime - \breve{b}^\prime|^2)  \sum_{i=1}^r (\hat{z}^2 +z_j^2) \\
& \leq \frac{3}{2} \kappa C^2 (|a^\prime - \breve{a}^\prime|^2 + |b^\prime - \breve{b}^\prime|^2) r |y|^2,
\end{align*}
which, together with the arbitrariness of $\hat{z}$ and $z_j$, $j \in \{1,\ldots,r\}$, leads to
\begin{align*}
\max_{|y|=1} y^\top (	\widehat{\mathcal{G}}^{(2)}_{\psi} - \mathcal{G}^{(2)}_{\psi} ) y \leq \frac{3}{2} r \kappa C^2 (|a^\prime - \breve{a}^\prime|^2 + |b^\prime - \breve{b}^\prime|^2).
\end{align*}
Hence, in view of (\ref{eq_5_14}) and the definition of $\Lambda^+$ (see Lemma \ref{Lemma_9} and \cite[Example 5.6.6]{Horn_book_2013}), we have
\begin{align}
\label{eq_5_21_1_3_2_3_1}
\lim_{\kappa \rightarrow \infty} \Upsilon^{(12)} \leq 0.
\end{align}

\subsubsection{Estimate of $\Upsilon^{(13)}$}
By definition, we have
\begin{align*}
\widehat{\mathcal{G}}^{(3)}_{\psi} &= \begin{bmatrix}
 	\frac{1}{2}\partial_t \widehat{h}_{\eta,\lambda}^\prime + \eta e^{-\lambda t^\prime} \Tr \bigl (\sigma \sigma^\top (t^\prime,\breve{a}^\prime,u)\bigr ) & 0 \\
0 & 0_{r \times r}
 \end{bmatrix}	\\
\mathcal{G}^{(3)}_{\psi} &= \begin{bmatrix}
 	- \frac{1}{2}\partial_t h_{\eta,\lambda}^\prime  - \frac{3}{2} \eta e^{-\lambda t^\prime} \Tr \bigl (\sigma \sigma^\top (t^\prime,a^\prime,u )  \bigr  ) & 0 \\
 	0 & 0_{r \times r}
 \end{bmatrix},
\end{align*}
which implies
\begin{align*}
& \Upsilon^{(13)} = \sup_{u \in U} \max \{\frac{1}{2}(\partial_t \widehat{h}_{\eta,\lambda}^\prime + \partial_t h_{\eta,\lambda}^\prime) + \eta e^{-\lambda t^\prime} \Tr \bigl (\sigma \sigma^\top (t^\prime,\breve{a}^\prime,u)\bigr ) \\
& \qquad \qquad  + \frac{3}{2}\eta e^{-\lambda t^\prime} \Tr \bigl (\sigma \sigma^\top (t^\prime,a^\prime,u )  \bigr  ),0\}.
\end{align*}

Note that
from Assumption \ref{Assumption_1}, 
\begin{align*}
& \Bigl | \eta e^{-\lambda t^\prime} \Tr \bigl (\sigma \sigma^\top (t^\prime,\breve{a}^\prime,u)\bigr ) + \frac{3 \eta e^{-\lambda t^\prime}}{2} \Tr \bigl (\sigma \sigma^\top (t^\prime,a^\prime,u )  \bigr  ) \Bigr | \\
& = | \eta e^{-\lambda t^\prime}  \|\sigma(t^\prime,\breve{a}^\prime,u) \|_{F}^2 + \frac{3 \eta e^{-\lambda t^\prime}}{2} \| \sigma(t^\prime,a^\prime,u )\|_{F}^2 |
\\
& \leq C_3 \eta e^{-\lambda t^\prime} (1 + |a^\prime |^2 + |\breve{a}^\prime|^2) \\
& \rightarrow C_4 \eta e^{- \lambda \tilde{t}} (1 + |\tilde{a}|^2)~ \text{as $\kappa \rightarrow \infty$ due to (\ref{eq_5_14})},
\end{align*}
and as shown above, 
\begin{align*}
& \frac{1}{2}(\partial_t \widehat{h}_{\eta,\lambda}^\prime + \partial_t h_{\eta,\lambda}^\prime)	 
\rightarrow  - \eta  \lambda e^{-\lambda \tilde{t}}(1 + |\tilde{a}|^2 + \tilde{b})~ \text{as $\kappa \rightarrow \infty$ due to (\ref{eq_5_14})}.
\end{align*}

Hence,
\begin{align*}
\lim_{\kappa \rightarrow \infty} \Upsilon^{(13)} & \leq \max \{(C_4 - \lambda) \eta   e^{-\lambda \tilde{t}}(1 + |\tilde{a}|^2 + \tilde{b}), 0\},
\end{align*}
and if we choose $\lambda > 0$ with $\lambda \geq C_4$, then
\begin{align}
\label{eq_5_22_4_2_1_1}
\lim_{\kappa \rightarrow \infty} \Upsilon^{(13)} & \leq 0.
\end{align}

\subsection{Estimate of $\Upsilon^{(2)}$}
In view of the definition of $H^{(21)}$,
\begin{align*}
\Upsilon^{(2)} = \sup_{(u,\beta(e)) \in U \times G^2} \{	\Upsilon^{(21)} + \Upsilon^{(22)} \},
\end{align*}
where
\begin{align*}
	\Upsilon^{(21)} &:= -\int_{E_{\delta}} [ \phi^\prime(t^\prime, \breve{a}^\prime + \chi(t^\prime, \breve{a}^\prime,u,e), \breve{b}^\prime + \beta(e)) - \phi^\prime(t^\prime, \breve{a}^\prime, \breve{b}^\prime) ]\pi( \dd e) 
 \\
 &\qquad \qquad + \int_{E_{\delta}}  \langle D \phi^\prime(t^\prime, \breve{a}^\prime, \breve{b}^\prime),  \begin{bmatrix}
 	\chi(t^\prime, \breve{a}^\prime,u,e) \\
 	\beta (e)
 \end{bmatrix} \rangle \pi(\dd e) \\
\Upsilon^{(22)} &:= \int_{E_{\delta}} [ \phi(t^\prime,a^\prime + \chi(t^\prime,a^\prime,u,e), b^\prime + \beta(e)) - \phi(t^\prime,a^\prime,b^\prime) ]\pi( \dd e) 
 \\
 &\qquad \qquad \qquad - \int_{E_{\delta}}  \langle D \phi(t,a^\prime,b^\prime),  \begin{bmatrix}
 	\chi(t^\prime,a^\prime,u,e) \\
 	\beta (e)
 \end{bmatrix} \rangle \pi(\dd e).
\end{align*}

Let $\chi^\prime(u,e) := \chi(t^\prime,a^\prime,u,e)$. From Lemma \ref{Lemma_B_1} in Appendix \ref{Appendix_B} and H\"oder inequality, it follows from the uniform boundedness of $D^2\phi$ that
\begin{align*}
\Upsilon^{(22)} 
= & \int_{E_{\delta}} \int_0^1 (1-z) \Tr \Bigl ( D^2 \phi (t^\prime,a^\prime + z \chi^\prime(u,e), b^\prime + z \beta(e)) \\
& \qquad \qquad \times \begin{bmatrix}
 	\chi^\prime (\chi^\prime)^\top(u,e) & \chi^\prime(u,e) \beta^\top(e) \\
 	\beta (e) (\chi^\prime)^\top(u,e) & \beta(e) \beta^\top(e) \nonumber
 \end{bmatrix} \Bigr ) \dd z  \pi (\dd e) \\
 \leq & \int_{E_{\delta}} \int_0^1 (1-z) \Bigl \| D^2 \phi (t^\prime,a^\prime + z \chi^\prime(u,e), b^\prime + z \beta(e)) \Bigr \|_{F} \nonumber \\
& \qquad \qquad \times (|\chi^\prime(u,e)| + |\beta(e)|) \dd z \pi (\dd e).  \\
&  \leq C \Bigl ( \bigl (\int_{E_{\delta}} |\chi^\prime(u,e)|^2 \pi (\dd e) \bigr )^{\frac{1}{2}} + \bigl ( \int_{E_{\delta}}  |\beta(e)|^2 \pi( \dd e) \bigr)^{\frac{1}{2}} \Bigr ).
\end{align*}
Then the regularity of $\chi$ in Assumption \ref{Assumption_1} and the fact that $\beta \in G^2(E,\mathcal{B}(E),\pi;\mathbb{R})$ can be restricted to a uniformly bounded control from Remark \ref{Remark_2} imply that $\lim_{\delta \downarrow 0} \Upsilon^{(22)} \leq 0$. A similar technique can be applied to show that $\lim_{\delta \downarrow 0} \Upsilon^{(21)} \leq 0$. 

Hence, we have
\begin{align}
\label{eq_5_25_3_2_6_2_1_2}
	\lim_{\delta \downarrow 0} \Upsilon^{(2)} \leq 0.
\end{align}

\subsection{Estimate of $\Upsilon^{(3)}$}
Recall (\ref{eq_5_15_111})
\begin{align*}
\Psi_{\nu;\eta,\lambda}^{\kappa} (t,a,\breve{a},b,\breve{b}) =& (\underline{W}_{\nu}(t,a,b) - h_{\eta,\lambda}(t,a,b)) \\
& - (\overline{W}(t,\breve{a},\breve{b}) + \widehat{h}(t,\breve{a},\breve{b})) - \zeta_{\kappa}(a,b,\breve{a},\breve{b}), \nonumber
\end{align*}
from which we have
\begin{align}	
\label{eq_5_17_1_1_1}
& \underline{W}_{\nu}(t,a,b)  - \overline{W}(t,\breve{a},\breve{b}) \\
& = \Psi_{\nu;\eta,\lambda}^{\kappa} 	(t,a,b,\breve{a},\breve{b}) +  h_{\eta,\lambda}(t,a,b) + \widehat{h}_{\eta,\lambda}(t,\breve{a},\breve{b}) + \zeta_{\kappa}(a,b,\breve{a},\breve{b}). \nonumber
\end{align}
We note that $(t^\prime, a^\prime, b^\prime,\breve{a}^\prime,\breve{b}^\prime)$ is the maximum point of $\Psi_{\nu;\eta,\lambda}^{\kappa}$.

Let $\chi^\prime(u,e) := \chi(t^\prime,a^\prime,u,e)$ and  $\breve{\chi}^\prime(u,e) := \chi(t^\prime,\breve{a}^\prime,u,e)$. Since $(t^\prime, a^\prime, b^\prime,\breve{a}^\prime,\breve{b}^\prime)$ is the maximum point of $\Psi_{\nu;\eta,\lambda}^{\kappa}$, it follows from (\ref{eq_5_17_1_1_1}) and the definition of $\Upsilon^{(3)}$ that
\begin{align*}
\Upsilon^{(3)} 
& = \sup_{u \in U, \beta \in G^2 }  \Bigl \{ 
\int_{E_{\delta}^C} \Bigl [ \Psi_{\nu;\eta,\lambda}^{\kappa}(t^\prime, a^\prime + \chi^\prime(u,e), b^\prime + \beta(e), \breve{a}^\prime + \breve{\chi}^\prime(u,e), \breve{b}^\prime + \beta(e)) \\
&\qquad \qquad - \Psi_{\nu;\eta,\lambda}^{\kappa}(t^\prime,a^\prime, b^\prime, \breve{a}^\prime, \breve{b}^\prime) +  h_{\eta,\lambda}(t^\prime, a^\prime + \chi^\prime(u,e),b^\prime + \beta(e)) \\
& \qquad \qquad  + \widehat{h}_{\eta,\lambda}(t^\prime,\breve{a}^\prime + \breve{\chi}^\prime(u,e), \breve{b}^\prime + \beta(e)) \\
& \qquad \qquad + \zeta_{\kappa}(a^\prime + \chi^\prime(u,e), b^\prime + \beta(e), \breve{a}^\prime + \breve{\chi}^\prime(u,e), \breve{b}^\prime + \beta(e)) \\
& \qquad \qquad - (h_{\eta,\lambda}(t^\prime,a^\prime,b^\prime) + \widehat{h}_{\eta,\lambda}(t^\prime,\breve{a}^\prime,\breve{b}^\prime) + \zeta_{\kappa}(a^\prime,b^\prime,\breve{a}^\prime,\breve{b}^\prime)) \Bigr ] \pi (\dd e) \\
&\qquad \qquad \qquad+ \int_{E_{\delta}^C} \langle - D_{(\breve{a},\breve{b})}( \widehat{h}_{\eta,\lambda}^\prime + \zeta_{\kappa}^\prime), \begin{bmatrix}
 	\breve{\chi}^\prime(u,e) \\
 	\beta (e)
 \end{bmatrix} \rangle \pi(\dd e) \\
&\qquad \qquad \qquad - \int_{E_{\delta}^C} \langle D_{(a,b)}(h_{\eta,\lambda}^\prime + \zeta^\prime_{\kappa}), \begin{bmatrix}
 	\chi^\prime(u,e) \\
 	\beta (e)
 \end{bmatrix} \rangle \pi(\dd e)  \Bigr \} \\
& \leq \sup_{u \in U, \beta \in G^2 } \{ \Upsilon^{(31)} + \Upsilon^{(32)} + \Upsilon^{(33)} \},
 \end{align*}
where
\begin{align*}
\Upsilon^{(31)} & := \int_{E_{\delta}^C} \Bigl [h_{\eta,\lambda}(t^\prime, a^\prime + \chi^\prime(u,e),b^\prime + \beta(e)) - h_{\eta,\lambda}(t^\prime,a^\prime,b^\prime) \Bigr ] \pi (\dd e) \\
& \quad - \int_{E_{\delta}^C} \langle D_{(a,b)} h_{\eta,\lambda}^\prime, \begin{bmatrix}
 	\chi^\prime(u,e) \\
 	\beta (e)
 \end{bmatrix} \rangle \pi(\dd e) \\
\Upsilon^{(32)} & := \int_{E_{\delta}^C} \Bigl [\widehat{h}_{\eta,\lambda}(t^\prime,\breve{a}^\prime + \breve{\chi}^\prime(u,e), \breve{b}^\prime + \beta(e)) - \widehat{h}_{\eta,\lambda}(t^\prime,\breve{a}^\prime,\breve{b}^\prime) \Bigr ] \pi (\dd e) \\
& \quad - \int_{E_{\delta}^C} \langle D_{(\breve{a},\breve{b})}\widehat{h}_{\eta,\lambda}^\prime, \begin{bmatrix}
 	\breve{\chi}^\prime(u,e) \\
 	\beta (e)
 \end{bmatrix} \rangle \pi(\dd e) \\ 
 \Upsilon^{(33)} &:= \int_{E_{\delta}^C} \Bigl [ \zeta_{\kappa}(a^\prime + \chi^\prime(u,e), b^\prime + \beta(e), \breve{a}^\prime + \breve{\chi}^\prime(u,e), \breve{b}^\prime + \beta(e))  \\
& \qquad \qquad - \zeta_{\kappa}(a^\prime,b^\prime,\breve{a}^\prime,\breve{b}^\prime)) \Bigr ] \pi (\dd e) \\
& \quad -  \int_{E_{\delta}^C}  \langle D_{(a,b)} \zeta^\prime_{\kappa}, \begin{bmatrix}
 	\chi^\prime(u,e) \\
 	\beta (e)
 \end{bmatrix} \rangle \pi(\dd e) + \int_{E_{\delta}^C} \langle - D_{(\breve{a},\breve{b})} \zeta_{\kappa}^\prime, \begin{bmatrix}
 	\breve{\chi}^\prime(u,e) \\
 	\beta (e)
 \end{bmatrix} \rangle \pi(\dd e).	
\end{align*}

From Lemma \ref{Lemma_B_1} in Appendix \ref{Appendix_B} and (\ref{eq_5_15_1_1_1}),
\begin{align}
\label{eq_5_22_1_1_1}
\Upsilon^{(31)} & = \int_{E_{\delta}^C} \int_0^1 (1-z) \Tr \Bigl ( \begin{bmatrix}
3 \eta e^{-\lambda t^\prime} I_n & 0  	\\
0 & 0
 \end{bmatrix} \\
& \qquad \qquad \times \begin{bmatrix}
 	\chi^\prime (\chi^\prime)^\top(u,e) & \chi^\prime(u,e) \beta^\top(e) \\
 	\beta (e) (\chi^\prime)^\top(u,e) & \beta(e) \beta^\top(e) \nonumber
 \end{bmatrix} \Bigr ) \dd z  \pi (\dd e) \nonumber \\
 & \leq Cn \eta e^{-\lambda t^\prime} (1 + |a^\prime|^2), \nonumber
 \end{align}
 and similarly,
 \begin{align}
\label{eq_5_22_1_1_1_4_3_2_1}
 \Upsilon^{(31)} & = \int_{E_{\delta}^C} \int_0^1 (1-z) \Tr \Bigl ( \begin{bmatrix}
2 \eta e^{-\lambda t^\prime}   I_n & 0 \\
0 & 0
 \end{bmatrix} \\
& \qquad \qquad \times \begin{bmatrix}
 	\breve{\chi}^\prime (\breve{\chi}^\prime)^\top(u,e) & \breve{\chi}^\prime(u,e) \beta^\top(e) \\
 	\beta (e) (\breve{\chi}^\prime)^\top(u,e) & \beta(e) \beta^\top(e) \nonumber
 \end{bmatrix} \Bigr ) \dd z  \pi (\dd e) \nonumber \\
 & \leq Cn \eta e^{-\lambda t^\prime}(1 + |\breve{a}^\prime|^2). \nonumber
 \end{align}
Moreover, using (\ref{eq_5_15_1_1_1}) and Assumption \ref{Assumption_1},
\begin{align}
\label{eq_5_23_1_2_1_2_1}
\Upsilon^{(33)} & = \frac{\kappa}{2} \int_{E_{\delta}^C} |\chi^\prime(u,e) - \breve{\chi}^\prime(u,e)|^2 \pi (\dd e) \leq \frac{\kappa}{2} |a^\prime - \breve{a}^\prime|^2 \\
& \rightarrow 0~ \text{as $\kappa \rightarrow \infty$ due to (\ref{eq_5_14})}. \nonumber 
\end{align}

Hence, (\ref{eq_5_22_1_1_1})-(\ref{eq_5_23_1_2_1_2_1}), together with (\ref{eq_5_14}), imply that
\begin{align}
\label{eq_5_30_2_1_2_1}
\lim_{\eta \downarrow 0} \lim_{\kappa \rightarrow \infty} \lim_{\delta \downarrow 0} \Upsilon^{(3)} \leq 0.
\end{align}

\appendix
\numberwithin{equation}{section}
\renewcommand{\theequation}{\thesection.\arabic{equation}}

\section{Proof of Lemma \ref{Lemma_1}}\label{Appendix_A}
\begin{proof}[Proof of Lemma \ref{Lemma_1}] 
For the existence and uniqueness in (i), see \cite[Theorem 6.2.3]{Applebaum_book} (\cite[Theorem 1.19]{Oksendal_book_jump} and \cite{Fujiwara_Kyoto_1985}). Let $f(x) := f(t,x,u)$, $\sigma(x) := \sigma(t,x,u)$ and $\chi(x) := \chi(t,x,u,e)$. Then note that
\begin{align*}
|x_s^{t,a;u} - x_s^{t,a^\prime;u}|^2 \leq & 4 |a-a^\prime|^2 + 4 \Bigl | \int_{t}^{s} |f(x_s^{t,a;u}) - f(x_s^{t,a^\prime;u})| \dd s \Bigr |^2 \\
& + 4 \Bigl | \int_t^s [\sigma(x_s^{t,a;u}) - \sigma (x_s^{t,a^\prime;u}) ]\dd B_s \Bigr |^2 \\
&+ 4 \Bigl | \int_{t}^{s} \int_{E} [\chi(x_s^{t,a;u}) - \chi(x_s^{t,a^\prime;u})] \tilde{N}(\dd e, \dd s) \Bigr |^2.
\end{align*}
By H\"older inequality and Assumption \ref{Assumption_1},
\begin{align}
\label{eq_a_1}
\mathbb{E} \Bigl [ \Bigl |\int_{t}^{s} |f(x_r^{t,a;u}) - f(x_r^{t,a^\prime;u})| \dd r \Bigr |^2 \Bigr ] & \leq C \mathbb{E} \int_t^s |x_r^{t,a;u} - x_r^{t,a^\prime;u}|^2 \dd r,
\end{align}
and applying Burkholder-Davis-Gundy inequality \cite[Theorem 4.4.21]{Applebaum_book} and Assumption \ref{Assumption_1} yields
\begin{align}
\label{eq_a_2}
\mathbb{E} \Bigl [ \Bigl | \int_t^s [\sigma(x_r^{t,a;u}) - \sigma (x_r^{t,a^\prime;u}) ]\dd B_r \Bigr |^2	\Bigr ] \leq C \mathbb{E} \int_t^s |x_r^{t,a;u} - x_r^{t,a^\prime;u}|^2 \dd r.
\end{align}
Moreover, from Kunita's formula for general L\'evy-type stochastic integrals \cite[Theorem 4.4.23]{Applebaum_book} and Assumption \ref{Assumption_1},
\begin{align}	
\label{eq_a_3}
& \mathbb{E} \Bigl [ \Bigl | \int_{t}^{s} \int_{E} [\chi(x_r^{t,a;u}) - \chi(x_r^{t,a^\prime;u})] \tilde{N}(\dd e, \dd r) \Bigr |^2 \Bigr ] \\
& \leq C \mathbb{E} \Bigl [  \int_t^s \int_{E} |\chi(x_r^{t,a;u}) - \chi(x_r^{t,a^\prime;u})|^{2} \pi (\dd e) \dd r \Bigr ]  \leq C \mathbb{E} \int_t^s |x_r^{t,a;u} - x_r^{t,a^\prime;u}|^2 \dd r. \nonumber
\end{align}
Then using (\ref{eq_a_1})-(\ref{eq_a_3}), together with Gronwall's lemma, we get (\ref{eq_1_2}). The proof for (\ref{eq_1_1}) is analogous, for which we have to use the linear growth condition in Assumption \ref{Assumption_1}.

To prove (\ref{eq_1_3}), note that
\begin{align*}
x_s^{t,a;u} = x_s^{t^\prime, x_{t^\prime}^{t,a;u};u},~ \forall s \in [t^\prime,T].	
\end{align*}
Then using (\ref{eq_1_2}) and the approach similar to above, it follows that
\begin{align*}
	\mathbb{E} \Bigl [\sup_{s \in [t^\prime,T]} | x_s^{t^\prime, x_{t^\prime}^{t,a;u};u} - x_s^{t^\prime,a;u} |^2 \Bigr ] \leq C \mathbb{E} \Bigl [ |x_{t^\prime}^{t,a;u} - a|^2 \Bigr ] \leq C(1+|a|^2)|t^\prime-t|.
\end{align*}
This completes the proof.
\end{proof}

\section{Technical Lemma} \label{Appendix_B}
The following lemma is given in \cite[Lemma 4.3, Chapter 3]{Yong_book} without its proof. Here, we provide a complete proof.
\begin{lemma}\label{Lemma_B_1}	
Suppose that $g \in C^2(\mathbb{R}^n)$. Then for any $x,a \in \mathbb{R}^n$,
\begin{align*}
g(x + a) = g(x) + \langle D g(x), a \rangle + \int_0^1 (1-z) \langle D^2 g (x + z a) a, a \rangle \dd z.
\end{align*}
\end{lemma}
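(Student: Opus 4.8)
The statement to prove is Lemma B.1, which is simply the second-order Taylor expansion with integral remainder for a $C^2$ function on $\mathbb{R}^n$ — this is a one-variable calculus fact dressed up in $n$ dimensions. Let me sketch a clean proof.

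=== PROOF PROPOSAL ===

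\begin{proof}[Proof of Lemma \ref{Lemma_B_1}]
The plan is to reduce the multivariate identity to the fundamental theorem of calculus along the segment joining $x$ and $x+a$, followed by an integration by parts. Fix $x,a \in \mathbb{R}^n$ and define the scalar function $\varphi:[0,1] \rightarrow \mathbb{R}$ by $\varphi(z) := g(x+za)$. Since $g \in C^2(\mathbb{R}^n)$, the chain rule gives that $\varphi \in C^2([0,1])$ with $\varphi'(z) = \langle Dg(x+za), a \rangle$ and $\varphi''(z) = \langle D^2 g(x+za) a, a \rangle$.

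Next I would apply the fundamental theorem of calculus to $\varphi'$, writing $\varphi(1) = \varphi(0) + \int_0^1 \varphi'(z)\,\dd z$, and then perform integration by parts on $\int_0^1 \varphi'(z)\,\dd z$ using the antiderivative $z \mapsto z-1$ of the constant $1$ (chosen so that the boundary term at $z=1$ vanishes). Concretely,
\begin{align*}
\int_0^1 \varphi'(z)\,\dd z = \bigl[(z-1)\varphi'(z)\bigr]_0^1 - \int_0^1 (z-1)\varphi''(z)\,\dd z = \varphi'(0) + \int_0^1 (1-z)\varphi''(z)\,\dd z.
\end{align*}
Combining the two displays yields $\varphi(1) = \varphi(0) + \varphi'(0) + \int_0^1 (1-z)\varphi''(z)\,\dd z$, which upon substituting the expressions for $\varphi(0),\varphi(1),\varphi'(0),\varphi''(z)$ is exactly the claimed identity $g(x+a) = g(x) + \langle Dg(x), a\rangle + \int_0^1 (1-z)\langle D^2 g(x+za)a, a\rangle\,\dd z$.

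There is no real obstacle here: the only points requiring a word of care are that the chain-rule computations of $\varphi'$ and $\varphi''$ are legitimate because $g$ is $C^2$ (so the second derivatives are continuous and the composition is genuinely twice continuously differentiable on the compact interval $[0,1]$), and that the integration by parts is valid since $\varphi'$ is $C^1$. All integrals are over a bounded interval with continuous integrands, so every step is elementary and fully justified.
\end{proof}
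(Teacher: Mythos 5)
Your proof is correct and follows essentially the same route as the paper's: reduce to the one-variable function $z \mapsto g(x+za)$, apply the fundamental theorem of calculus, and integrate by parts against the antiderivative $z-1$ so the boundary term at $z=1$ vanishes. The only cosmetic difference is that you phrase the computation via $\varphi$, $\varphi'$, $\varphi''$ while the paper writes out the coordinate sums explicitly.
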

\begin{proof}
Note that $\frac{\dd }{ \dd z} g(x + za) = \langle D g(x + za), a \rangle$,
which leads to
\begin{align*}
g(x + a) - g(x) = \int_0^1 \langle D g(x+ za), a \rangle \dd z  = \int_0^1 \sum_{i=1}^n \partial_{x_i} g (x +  z a) a_i \dd z.
\end{align*}
Using the integration by parts formula $\int_0^1 u \frac{ \dd v}{\dd z}\dd z = - \int_0^1  v \frac{\dd u}{\dd z} \dd z + uv|_{0}^{1}$ with $u=\sum_{i=1}^n \partial_{x_i} g (x +  z a) a_i$ and $v=z-1$ yields (note that $\frac{\dd u}{\dd z} = \sum_{i,j=1}^n \partial_{x_i x_j} g (x + za) a_i a_j$)
\begin{align*}
g(x + a) - g(x) &= \sum_{i=1}^n \partial_{x_i} g (x) a_i + \int_0^1 (1-z) \sum_{i,j=1}^n  \partial_{ x_i x_j} g (x + za) a_i a_j \dd z \\
& = \langle D g(x), a \rangle + \int_0^1 (1-z) \langle D^2 g (x + z a) a, a \rangle \dd z.
\end{align*}
We complete the proof.
\end{proof}

\section{Existence of Optimal Controls for Jump Diffusion Systems}\label{Appendix_C}

In Theorem \ref{Theorem_1}, an additional assumption of the existence of optimal controls for the auxiliary optimal control problem in (\ref{eq_10}) is required. Here, we show that a certain class of stochastic optimal control problems for jump diffusion systems with unbounded control sets admits an optimal control. The proof of the main result in this appendix (see Theorem \ref{Theorem_C_1}) extends the case of SDEs in a Brownian setting without jumps studied in \cite[Appendix A]{Bokanowski_SICON_2016} and \cite[Theorem 5.2, Chapter 2]{Yong_book} to the framework of jump diffusion systems.

As in (\ref{eq_10}), consider
\begin{align}
\label{eq_c_1}
W(t,a,b) & :=  \mathop{\inf_{u \in \mathcal{U}}}_{\alpha \in \mathcal{A},\beta \in \mathcal{B}} \overline{J}(t,a,b;u,\alpha,\beta),
\end{align}
where
\begin{align*}
\overline{J}(t,a,b;u,\alpha,\beta) = \mathbb{E} \Bigl [ \rho_2(x_T^{t,a;u}, y_{T;t,a,b}^{u,\alpha,\beta})  + \int_t^T \rho_1(s,x_s^{t,a;u}, u_s) \dd s \Bigr ],
\end{align*}
and subject to (we recall (\ref{eq_1}) and (\ref{eq_5}))
\begin{align*}
& \begin{cases}
\dd x_{s}^{t,a;u} = f(s,x_{s}^{t,a;u},u_s)\dd s + \sigma(s,x_{s}^{t,a;u},u_s) \dd B_s	\\
\qquad \qquad + \int_{E} \chi(s,x_{s-}^{t,a;u},u_s,e) \tilde{N}(\dd e, \dd s), ~ x_{t}^{t,a;u} = a \\
\dd y_{s; t,a,b}^{u,\alpha,\beta} = - 	l(s,x_{s}^{t,a;u},u_s) \dd s + \alpha_s^\top \dd B_s + \int_{E} \beta_s(e)  \tilde{N}(\dd e, \dd s),~ y_{t; t,a,b}^{u,\alpha,\beta} = b.
\end{cases}
\end{align*}

\begin{assumption}\label{Assumption_4}
\begin{enumerate}[(i)]
\item For $\iota := f, \sigma, \chi, l$ with $\iota = \begin{bmatrix} \iota_1^\top & \cdots & \iota_n^\top\end{bmatrix}^\top$, $\iota$ satisfies Assumptions \ref{Assumption_1} and \ref{Assumption_2}, and is independent of $x$. Moreover, $\iota_i$, $i=1,\ldots,n$, is convex and Lipschitz continuous in $u$ with the Lipschitz constant $L$;
\item $\rho_1$ and $\rho_2$ are convex, nondecreasing and bounded from below;
\item $U \subset \mathbb{R}^m$ is a compact and convex set.
\end{enumerate}	
\end{assumption}
Note that Assumption \ref{Assumption_4} is different from that in \cite[Appendix A]{Bokanowski_SICON_2016} and \cite[Theorem 5.2, Chapter 2]{Yong_book}. We have the following result:
\begin{theorem}\label{Theorem_C_1}
Suppose that Assumption \ref{Assumption_4} holds. Then (\ref{eq_c_1}) admits an optimal solution $(\widehat{u}, \widehat{\alpha}, \widehat{\beta}) \in \mathcal{U} \times \mathcal{A} \times \mathcal{B}$, i.e., 
\begin{align*}
W(t,a,b) = \overline{J}(t,a,b;\widehat{u}, \widehat{\alpha}, \widehat{\beta}) = \mathop{\inf_{u \in \mathcal{U}}}_{\alpha \in \mathcal{A},\beta \in \mathcal{B}} \overline{J}(t,a,b;u,\alpha,\beta).	
\end{align*}
\end{theorem}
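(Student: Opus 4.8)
The plan is to run the direct method of the calculus of variations, in the spirit of \cite[Theorem 5.2, Chapter 2]{Yong_book} and \cite[Appendix A]{Bokanowski_SICON_2016}, the genuinely new ingredient being the compensated Poisson integral. Fix $(t,a,b)$. Since $\rho_1,\rho_2$ are bounded from below, $\overline{J}(t,a,b;\cdot,\cdot,\cdot)\geq -C$, so $W(t,a,b)>-\infty$ and one may pick a minimizing sequence $(u^k,\alpha^k,\beta^k)_{k\geq1}\subset\mathcal{U}\times\mathcal{A}\times\mathcal{B}$ with $\overline{J}(t,a,b;u^k,\alpha^k,\beta^k)\to W(t,a,b)$. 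First I would derive a priori bounds: compactness of $U$ gives $\sup_k\|u^k\|_{\mathcal{L}^2_{\mathbb{F}}}<\infty$, and by Remark \ref{Remark_2} (for each fixed $u$ the martingale part of $y$ may be realized with $\mathcal{L}^2_{\mathbb{F}}$- and $\mathcal{G}^2_{\mathbb{F}}$-norms controlled by $C(1+|a|+|b|)$) we may assume without loss of generality that $(\alpha^k,\beta^k)$ is bounded in $\mathcal{A}\times\mathcal{B}$. Thus the sequence lies in a bounded subset of the Hilbert space $\mathcal{L}^2_{\mathbb{F}}(t,T;\mathbb{R}^m)\times\mathcal{A}\times\mathcal{B}$.

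Next, along a subsequence, $(u^k,\alpha^k,\beta^k)\rightharpoonup(\widehat u,\widehat\alpha,\widehat\beta)$ weakly, with $\widehat\alpha\in\mathcal{A}$, $\widehat\beta\in\mathcal{B}$ automatic. To check $\widehat u\in\mathcal{U}$, i.e.\ $\widehat u_s(\omega)\in U$ for a.e.\ $(s,\omega)$, I would invoke Mazur's lemma on the whole triple: there are convex combinations $(v^k,\mu^k,\nu^k)=\sum_{i\geq k}\lambda_i^k(u^i,\alpha^i,\beta^i)$, $\lambda_i^k\geq0$, $\sum_i\lambda_i^k=1$, converging strongly to $(\widehat u,\widehat\alpha,\widehat\beta)$; convexity of $U$ makes each $v^k$ take values in $U$, and closedness of $U$ passes this to the strong (hence a.e., along a further subsequence) limit $\widehat u$.

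The heart of the argument is passing to the limit in the state and the cost, and here Assumption \ref{Assumption_4}(i) is decisive: because $f,\sigma,\chi,l$ are independent of $x$, the trajectory is an explicit functional of the control,
\begin{align*}
x_s^{t,a;u}=a+\int_t^s f(\tau,u_\tau)\,\dd\tau+\int_t^s\sigma(\tau,u_\tau)\,\dd B_\tau+\int_t^s\int_E\chi(\tau,u_\tau,e)\,\tilde N(\dd e,\dd\tau),
\end{align*}
and likewise for $y_{T;t,a,b}^{u,\alpha,\beta}$; equivalently, the substitution $u\mapsto w:=(f(\cdot,u),\sigma(\cdot,u),\chi(\cdot,u,\cdot),l(\cdot,u))$ renders the trajectories affine in the reparametrized controls. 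Along the Mazur combinations $v^k\to\widehat u$ strongly in $\mathcal{L}^2_{\mathbb{F}}$ (and a.e., along a subsequence), continuity of $f,\sigma,\chi$ in $u$, compactness of $U$ and the linear growth of Assumption \ref{Assumption_1} give, by dominated convergence, $f(\cdot,v^k)\to f(\cdot,\widehat u)$, $\sigma(\cdot,v^k)\to\sigma(\cdot,\widehat u)$ in $\mathcal{L}^2_{\mathbb{F}}$ and $\chi(\cdot,v^k,\cdot)\to\chi(\cdot,\widehat u,\cdot)$ in $\mathcal{G}^2_{\mathbb{F}}$; the It\^o isometry for the Brownian integral and Kunita's isometry \cite[Theorem 4.4.23]{Applebaum_book} for the compensated Poisson integral then yield $\sup_s\mathbb{E}|x_s^{t,a;v^k}-x_s^{t,a;\widehat u}|^2\to0$, and, together with $\mu^k\to\widehat\alpha$, $\nu^k\to\widehat\beta$, also $\mathbb{E}|y_{T;t,a,b}^{v^k,\mu^k,\nu^k}-y_{T;t,a,b}^{\widehat u,\widehat\alpha,\widehat\beta}|^2\to0$. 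Hence $\overline{J}$ is continuous along strong convergence of the controls (using the local Lipschitz property of the convex $\rho_1,\rho_2$ together with the uniform $L^2$-bounds), and, being convex in the control variables by the convexity of $\rho_1,\rho_2$ and of the components $f_i,\sigma,\chi_i,l_i$ in $u$ through the affine reparametrization, it is weakly lower semicontinuous; therefore
\begin{align*}
W(t,a,b)\leq\overline{J}(t,a,b;\widehat u,\widehat\alpha,\widehat\beta)\leq\liminf_k\overline{J}(t,a,b;u^k,\alpha^k,\beta^k)=W(t,a,b),
\end{align*}
so $(\widehat u,\widehat\alpha,\widehat\beta)$ is an optimal control.

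The step I expect to be the main obstacle is precisely this last one: reconciling the weak convergence $u^k\rightharpoonup\widehat u$ with the nonlinear dependence of $f,\sigma,\chi,l$ on $u$ and with the convexity/lower-semicontinuity of $\overline{J}$, which is where Mazur's lemma, the convexity of $U$, and the convexity of the coefficient components and of $\rho_1,\rho_2$ all have to be used in concert. Within this, the new term relative to \cite{Bokanowski_SICON_2016} is the compensated Poisson integral $\int_E\chi(\cdot,u^k,e)\,\tilde N(\dd e,\cdot)$: its $L^2$-convergence along the Mazur combinations must be obtained through Kunita's isometry, and its contribution to the convexity estimates for $\overline{J}$ must be tracked in the $\mathcal{G}^2_{\mathbb{F}}$-norm rather than pointwise.
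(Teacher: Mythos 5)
Your proposal follows essentially the same route as the paper's proof: a minimizing sequence, uniform bounds on $(\alpha^k,\beta^k)$ from Remark \ref{Remark_2} together with the compactness of $U$, weak sequential compactness in the Hilbert spaces $\mathcal{L}^2_{\mathbb{F}}\times\mathcal{G}^2_{\mathbb{F}}$, Mazur's lemma to upgrade to strong convergence of convex combinations (with convexity and closedness of $U$ giving admissibility of the limit), stability of the state and of $\overline{J}$ under strong convergence of the controls via the It\^o and Kunita isometries, and the convexity structure of Assumption \ref{Assumption_4} to close the chain of inequalities. The one point to tighten is the convexity of $\overline{J}$ in $u$: since $f_i,\sigma,\chi_i,l$ are convex but not affine in $u$, the composition with $\rho_1,\rho_2$ is convex only because $\rho_1,\rho_2$ are also \emph{nondecreasing} (Assumption \ref{Assumption_4}(ii)) --- a hypothesis your argument never invokes but which the paper uses precisely when it bounds the cost along $\widetilde u_{k_i}$ by $\rho$ evaluated at the convex combination of the individual states.
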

\begin{proof}
Since $\rho_1$ and $\rho_2$ are bounded from below, (\ref{eq_c_1}) is well defined. Suppose that $\{(\widehat{u}_k, \widehat{\alpha}_k, \widehat{\beta}_k)\}_{k \geq 1} \in \mathcal{U} \times \mathcal{A} \times \mathcal{B}$ is a sequence of minimizing controllers such that
\begin{align*}
\overline{J}(t,a,b;\widehat{u}_k, \widehat{\alpha}_k, \widehat{\beta}_k) \xrightarrow{k \rightarrow \infty} W(t,a,b).
\end{align*}
Note that $\mathcal{L}_{\mathbb{F}}^2$ and $\mathcal{G}_{\mathbb{F}}^2$ are Hilbert spaces. Also, from Remark \ref{Remark_2}, $\{(\widehat{\alpha}_k,\widehat{\beta}_k)\}_{k \geq 1}$ can be restricted to a sequence of uniformly bounded controls in $\mathcal{L}_{\mathbb{F}}^2$ and $\mathcal{G}_{\mathbb{F}}^2$ senses, and $U$ is compact from (iii) of Assumption \ref{Assumption_4}. Hence, in view of \cite[Theorem 3.18]{Brezis_book}, we can extract a subsequence $\{(u_{k_i},\widehat{\alpha}_{k_i},\widehat{\beta}_{k_i})\}_{i \geq 1}$ from $\{(\widehat{u}_k, \widehat{\alpha}_k, \widehat{\beta}_k) \}_{k \geq 1}$ such that
\begin{align*}
	(\widehat{u}_{k_i},\widehat{\alpha}_{k_i},\widehat{\beta}_{k_i}) \xrightarrow{i \rightarrow \infty} (\widehat{u}, \widehat{\alpha},\widehat{\beta})~ \text{weakly in $\mathcal{L}_{\mathbb{F}}^2 \times \mathcal{L}_{\mathbb{F}}^2 \times \mathcal{G}_{\mathbb{F}}^2$}.
\end{align*}
Then for each $\epsilon > 0$, there exists $i^\prime$ such that for any $i \geq i^\prime$,
\begin{align}
\label{eq_c_2_1_1_1}
\overline{J}(t,a,b,;\widehat{u}_{k_i},\widehat{\alpha}_{k_i},\widehat{\beta}_{k_i}) \leq W(t,a,b) + \frac{\epsilon}{2}.	
\end{align}

From Mazur's lemma \cite[Corollary 3.8]{Brezis_book}, we have convex combinations of subsequences above
\begin{align}
\label{eq_c_2}
	(\widetilde{u}_{k_i},\widetilde{\alpha}_{k_i},\widetilde{\beta}_{k_i}) &:= \sum_{p \geq 1} \theta_{k_i p}(\widehat{u}_{k_i + p},\widehat{\alpha}_{k_i + p},\widehat{\beta}_{k_i + p}),~ \theta_{k_i p} \geq 0,~ \sum_{p\geq 1} \theta_{k_i p} = 1,
\end{align}
such that
\begin{align}
\label{eq_c_4_1_1_1}
	(\widetilde{u}_{k_i},\widetilde{\alpha}_{k_i},\widetilde{\beta}_{k_i}) \xrightarrow{i \rightarrow \infty} (\widehat{u}, \widehat{\alpha},\widehat{\beta})~ \text{strongly in $\mathcal{L}_{\mathbb{F}}^2 \times \mathcal{L}_{\mathbb{F}}^2 \times \mathcal{G}_{\mathbb{F}}^2$},
\end{align}
where $(\widehat{u}, \widehat{\alpha},\widehat{\beta}) \in \mathcal{U} \times \mathcal{A} \times \mathcal{B}$. 

Then from (\ref{eq_c_2}) and (i) of Assumption \ref{Assumption_4}, we have
\begin{align*}
x_{s}^{t,a;\widetilde{u}_{k_i}} \preceq \sum_{p \geq 1} \theta_{k_ip} x_{s}^{t,a;\widehat{u}_{k_i+p}},~ y_{s;t,a,b}^{\widetilde{u}_{k_i}, \widetilde{\alpha}_{k_i}, \widetilde{\beta}_{k_i}} \leq \sum_{p \geq 1} \theta_{k_i p} y_{s;t,a,b}^{\widehat{u}_{k_i+p}, \widehat{\alpha}_{k_i+p}, \widehat{\beta}_{k_i+p}},~ s \in [t,T],
\end{align*}
where $\preceq$ denotes the componentwise inequality. Using the Lipschitz property of $f$, $\sigma$, $\chi$ and $l$ in $u$ (see (i) of Assumption \ref{Assumption_4}) and the proof of Lemma \ref{Lemma_1}, (\ref{eq_c_4_1_1_1}) implies the convergence of the following sequence strongly in the $\mathcal{L}_{\mathbb{F}}^{\infty}$-norm sense:
\begin{align*}	
(x_{t}^{t,a;\widetilde{u}_{k_i}}, y_{t;t,a,b}^{\widetilde{u}_{k_i}, \widetilde{\alpha}_{k_i}, \widetilde{\beta}_{k_i}}) \xrightarrow{i \rightarrow \infty} (x_{t}^{t,a;\widehat{u}}, y_{t;t,a,b}^{\widehat{u}, \widehat{\alpha}, \widehat{\beta}}).
\end{align*}

By continuity of $\overline{J}$, for each $\epsilon > 0$, there exists $i^{\prime \prime}$ such that $i \geq i^{\prime \prime}$, 
\begin{align*}
\overline{J}(t,a,b;\widehat{u},\widehat{\alpha},\widehat{\beta}) \leq \overline{J}(t,a,b;	\widetilde{u}_{k_i},\widetilde{\alpha}_{k_i},\widetilde{\beta}_{k_i}) + \frac{\epsilon}{2}.
\end{align*}
This, together (ii) of Assumption \ref{Assumption_4} and (\ref{eq_c_2_1_1_1}), shows that for any $i \geq \max \{i^\prime, i^{\prime \prime}\}$,
\begin{align*}	
\overline{J}(t,a,b;\widehat{u},\widehat{\alpha},\widehat{\beta}) &\leq \overline{J}(t,a,b;	\widetilde{u}_{k_i},\widetilde{\alpha}_{k_i},\widetilde{\beta}_{k_i}) + \epsilon \\
 & \leq \mathbb{E} \Bigl [ \rho_2( \sum_{p \geq 1} \theta_{k_ip} x_{T}^{t,a;\widehat{u}_{k_i+p}}, \sum_{p \geq 1} \theta_{k_i p} y_{T;t,a,b}^{\widehat{u}_{k_i+p}, \widehat{\alpha}_{k_i+p}, \widehat{\beta}_{k_i+p}})  \\
 &\qquad \qquad + \int_t^T \rho_1(s,\sum_{p \geq 1} \theta_{k_ip} x_{s}^{t,a;\widehat{u}_{k_i+p}}, \sum_{p \geq 1} \theta_{k_i p}\widehat{u}_{k_i + p,s}) \dd s \Bigr ] + \frac{\epsilon}{2} \\
& \leq \sum_{p \geq 1} \theta_{k_i p} \overline{J}(t,a,b;	\widehat{u}_{k_i + p},\widehat{\alpha}_{k_i + p},\widehat{\beta}_{k_i + p}) + \frac{\epsilon}{2}  \\
& \leq  W(t,a,b) + \epsilon.
\end{align*}
Since $\epsilon$ is arbitrary, we have the desired result. This completes the proof.
\end{proof}

\begin{remark}
As in \cite[Appendix A]{Bokanowski_SICON_2016}, we can also use the following assumption in Theorem \ref{Theorem_C_1}:
\begin{enumerate}[(i)]
\item $f(s,x,u) = A_{s} x + B_{s} u$, $\sigma(s,x,u) = C_s x + D_s u$, $\chi(s,x,e) = E_s x + F_s u + r_s(e)$ and $l(s,x,u) = H_s x + K_s u$, 
where $A$, $B$, $C$, $E$, $F$, $r$, $H$ and $K$ are deterministic and bounded coefficients with appropriate dimensions;
\item $\rho_1$ and $\rho_2$ are convex and bounded from below;
\item $U \subset \mathbb{R}^m$ is a compact and convex set.
\end{enumerate}
Unlike the case of SDEs in a Brownian setting, there are not many results on the existence of optimal controls for jump diffusions systems. Some results related to the relaxed optimal solution approach can be found in \cite{Kushner_JMAA_2000, Hanane_AS_2017}. It is interesting to study the existence of optimal controls for jump diffusion systems in the original strong sense as for the case of SDEs driven by Brownian motion in \cite{Haussmann_SICON_1990}.
\end{remark}

\bibliographystyle{siam}
\bibliography{researches_1}

\medskip
Received xxxx 20xx; revised xxxx 20xx.
\medskip

\end{document}